\documentclass[a4paper,11pt,reqno]{amsart}
\setlength{\textheight}{23.30cm}
\setlength{\textwidth}{15.5cm}
\setlength{\oddsidemargin}{0.2cm}
\setlength{\evensidemargin}{0.2cm}
\setlength{\topmargin}{0cm}
\setlength{\parindent}{1em}
\setlength{\parskip}{0.5em}

\usepackage{amssymb,amsthm}
\usepackage{ifthen}
\usepackage{amsmath}
\usepackage{color}
\usepackage{stmaryrd}
\usepackage{bm}
\usepackage{cleveref}
\usepackage{pgfplots}
\usepackage{graphicx}

\usepackage{algorithm}
\usepackage{algpseudocode}

\usepackage{listings}
\definecolor{mygreen}{rgb}{0,0.6,0}
\definecolor{mygray}{rgb}{0.5,0.5,0.5}
\definecolor{mymauve}{rgb}{0.58,0,0.82}
\lstset{ 
backgroundcolor=\color{lightgray},   
basicstyle=\footnotesize,        
breakatwhitespace=false,         
breaklines=true,                 
captionpos=b,                    
commentstyle=\color{mygreen},    
deletekeywords={...},            
escapeinside={\%*}{*)},          
extendedchars=true,              
firstnumber=1,                   
frame=single,	                   
keepspaces=true,                 
keywordstyle=\color{blue},       
language=python,                 
morekeywords={*,...},            
numbers=left,                    
numbersep=5pt,                   
numberstyle=\tiny\color{mygray}, 
rulecolor=\color{black},         
showspaces=false,                
showstringspaces=false,          
showtabs=false,                  
stringstyle=\color{mymauve},     
tabsize=2,	                   
title=\lstname                   
}

\lstset{escapeinside={(*@}{@*)}}            
\lstset{belowcaptionskip=1em,
	belowskip=-0.5em}                             

\newtheorem{lemma}{Lemma}[section]

\newcommand{\eps}{\varepsilon}
\newcommand{\W}{\ensuremath{\mathcal{W}}}
\newcommand{\T}{\ensuremath{\mathcal{T}}}
\newcommand{\E}{\ensuremath{\mathcal{E}}}
\newcommand{\tr}[1]{\ensuremath{\,\mathrm{tr}(#1)}}

\newcommand{\Div}[2][]{\ensuremath{\,\mathrm{div}^{#1}(#2)}}

\newcommand{\mat}[1]{\ensuremath{\begin{pmatrix}#1\end{pmatrix}}}
\newcommand{\Av}[1]{\ensuremath{\{#1\}}}
\newcommand{\bkappa}{\ensuremath{\bm{\kappa}}}
\newcommand{\bsigma}{\ensuremath{\bm{\sigma}}}
\newcommand{\bI}{\ensuremath{\bm{I}}}
\newcommand{\Surf}{\ensuremath{\mathcal{S}}}

\newcommand{\bA}{\ensuremath{\bm{A}}}

\newcommand{\bX}{\ensuremath{\bm{X}}}
\newcommand{\bnu}{\ensuremath{\bm{\nu}}}
\newcommand{\VT}{{\mathbf{T}}}
\newcommand{\Tt}{\ensuremath{\VT_t}}
\newcommand{\pTt}{\ensuremath{\partial\bm{T}_t}}
\newcommand{\bP}{\ensuremath{\bm{P}}}
\newcommand{\bPsi}{\ensuremath{\bm{\Psi}}}
\newcommand{\bu}{\ensuremath{\bm{u}}}

\newcommand{\CC}{\ensuremath{C_c}}

\newcommand{\Pol}{\ensuremath{\mathcal{P}}}
\newcommand{\VR}{{\mathbf{R}}}
\newcommand{\VX}{{\mathbf{X}}}
\newcommand{\taubf}{\boldsymbol{\tau}}
\newcommand{\nubf}{\boldsymbol{\nu}}
\newcommand{\mubf}{\boldsymbol{\mu}}
\providecommand{\Id}{\Op{Id}}
\newcommand*{\Op}[1]{\mathsf{#1}} 
\newcommand{\Vu}{{\mathbf{u}}}
\newcommand{\VA}{{\mathbf{A}}}
\newcommand{\VV}{{\mathbf{V}}}
\newcommand{\VW}{{\mathbf{W}}}
\newcommand{\Vx}{{\mathbf{x}}}
\newcommand{\Va}{{\mathbf{a}}}
\newcommand{\Vb}{{\mathbf{b}}}
\newcommand{\Vc}{{\mathbf{c}}}

\numberwithin{equation}{section}
\counterwithin{figure}{section}

\title{Numerical shape optimization of the Canham-Helfrich-Evans bending energy}

\author{Michael Neunteufel} 
\address[Michael Neunteufel]{Institute of Analysis and Scientific Computing, TU Wien, Wiedner Hauptstrasse 8-10, 1040 Wien, Austria.}
\email{michael.neunteufel@tuwien.ac.at}
\urladdr{https://www.asc.tuwien.ac.at/~schoeberl/wiki/index.php/Michael\_Neunteufel}

\author{Joachim Sch\"oberl} 
\address[Joachim Sch\"oberl]{Institute of Analysis and Scientific Computing, TU Wien, Wiedner Hauptstrasse 8-10, 1040 Wien, Austria.}
\email{joachim.schoeberl@tuwien.ac.at}
\urladdr{https://www.asc.tuwien.ac.at/~schoeberl/wiki/index.php/Joachim\_Sch{\"o}berl}

\author{Kevin Sturm} 
\address[Kevin Sturm]{Institute of Analysis and Scientific Computing, TU Wien, Wiedner Hauptstrasse 8-10, 1040 Wien, Austria.}
\email{kevin.sturm@tuwien.ac.at}
\urladdr{https://www.asc.tuwien.ac.at/sturm/}

\date{\today}
\begin{document}
\maketitle

\begin{abstract}
In this paper we propose a novel numerical scheme for the Canham--Helfrich--Evans bending energy based on a three-field lifting procedure of the distributional shape operator to an auxiliary mean curvature field. Together with its energetic conjugate scalar stress field as Lagrange multiplier the resulting fourth order problem is circumvented and reduced to a mixed saddle point problem involving only second order differential operators. Further, we derive its analytical first variation (also called first shape derivative), which is valid for arbitrary polynomial order, and discuss how the arising shape derivatives can be computed automatically in the finite element software NGSolve. We finish the paper with several numerical simulations showing the pertinence of the proposed scheme and method.\\


\noindent
\textbf{\textit{Keywords:}} Canham-Helfrich-Evans bending energy, shape optimization, mixed finite element method, distributional curvature, biomembrane.\\

\noindent
\textbf{\textit{MSC2020:}} 65N30, 65K10, 53E40, 49M05, 74K15
\end{abstract}
\section{Introduction}
\label{sec:introduction}
In this paper we study the numerical minimization of the Canham-Helfrich-Evans \cite{CANHAM70,Evans74,a_HE_1973a} bending energy
\begin{align}
\W(\partial \Omega)= 2\kappa_b\int_{\partial\Omega}(H-H_0)^2\,ds, \quad \Omega\subset \VR^3  \text{ bounded domain},\label{eq:canham_helfrich_energy}
\end{align}
subject to the following volume and area constraints
\begin{align}
|\Omega| = V_0, \qquad |\partial \Omega| = A_0, \label{eq:constraints}
\end{align}
where the positive constants $V_0$, $A_0>0$  obey the isoperimetric inequality
\begin{align}
V_0 \leq \frac{A_0^{\frac{3}{2}}}{6\sqrt{\pi}}.
\end{align}
Here $H:=\frac{1}{2}(\kappa_1+\kappa_2)$ denotes the mean curvature of $\partial\Omega$, $\kappa_1$ and $\kappa_2$ its principal curvatures, $2H_0$ the so-called spontaneous curvature ($H_0$ is half the spontaneous curvature), and $\kappa_b$ a bending elastic constant. Henceforth we will use the abbreviation $\Surf:=\partial\Omega$ keeping in mind that $\Surf$ is the surface enclosing the volume $\Omega$. The Energy \eqref{eq:canham_helfrich_energy} was proposed to model membranes such as vesicles and red-blood cells \cite{a_HE_1973a,ME94}. The numerical treatment of this problem is not straight-forward since the computation 
of the mean curvature typically involves the Laplace-Beltrami operator of the normal field, which would involve fourth order derivatives of the surface coordinates and thus requires a certain smoothness of the (discretized) surface. Typically shapes are approximated with continuous, non-smooth triangulations (mostly linear or quadratic ones) leading to the fundamental and non-trivial question of computing/approximating the appropriate curvature.
Nevertheless, several approaches to tackle this problem have been proposed. For a recent comprehensive review of the Canham-Helfrich-Evans energy including various numerical approaches we refer to \cite{GG17}. 

A variety of methods are based on the approximation of the 
Laplace-Beltrami operator by means of discrete differential geometry (DDG) \cite{MDSB03,GGRZ06,WBHZG07}. The minimization is then achieved by e.g., differentiating the discrete energy with respect to the 
nodes and follow a negative gradient, see \cite{a_BILIKO_2020a} for a comparison with several established numerical approximation schemes minimizing the Canham-Helfrich-Evans energy. A popular discretization scheme for the Laplace-Beltrami operator is the finite difference cotangent method on a Voronoi area entailing also a direct computation of a possibly involved Gaussian curvature in terms of the angle deficit, used e.g. in \cite{BFM11,BLJ11,SHKL16}.

The shape derivative of geometric quantities and the full Canham-Helfrich-Evans energy has been computed, e.g., in \cite{CMP17,DN12,LMS10,Walker15} involving fourth order derivatives and the Gauss curvature of the shape.
Beside boundary integral methods \cite{Poz92,VRBZ11,FBM14}, procedures based on surface finite element methods (SFEM) \cite{DE13,ESV12} approximate the surface of the shape with (possible high-order curved) isoperimetric elements. For linear triangulation the discrete normal vectors are sometimes averaged giving the possibility of computing the weak gradient globally \cite{BGN08}. For higher polynomial orders, however, the shape derivative yields complicated expressions due to the (nonlinear) averaging procedure. To avoid $C^1$-conforming elements the mean curvature $H$ or mean curvature vector $\bm{H}=H\nubf$ gets introduced as independent field and the equations are rewritten in such a way that no expressions in terms of the normal vector $\nubf$ in strict sense are left, \cite{Rusu05,Dziuk08,BGN08,BNS10}.

Using smooth approximations of the surface by, e.g.,  high-order B-splines or sub-division algorithms has been recently investigated in \cite{SDMS17,TMA19}, circumventing a non-continuous normal vector field.

Level set and phase field approaches \cite{DLW04,MMPR12,LSM14} discretize the full space and the surface gets represented implicitly by a level set function. On the one hand geometric quantities as the normal vector are therefore easier accessible and changes of the shape's topology are allowed, but on the other hand full-space computations have to be performed.

Mostly, instead of a quasi-static procedure a time-stepping algorithm with possible damping and stabilization techniques are used to find stationary solutions or for dynamic tests. For evolutionary geometries the famous time-stepping algorithm of Dziuk \cite{Dziuk90} is frequently considered and has been firstly extended to Willmore flows in \cite{Rusu05}. Recently Dziuk's algorithm has been further developed for mean curvature and Willmore flows in \cite{KLL19,KLL20}, where convergence has been rigorously proven.

In this work, we propose a novel discretization approach based on a lifting procedure of the distributional (mean) curvature to a more regular auxiliary curvature field (not to be confused with the lifting from the discrete to the exact surface in the sense of \cite{DE13}). Besides the classical element-wise shape operator, also the angle of the jump of the normal vector between two adjacent elements is considered as element-boundary integral to describe the full curvature. This has the advantage that we can directly apply the shape derivative to each of the individual terms. A derivation of the involved duality pairing is presented to build a bridge between (distributional based) surface finite elements and DDG, where several formulations rely also on the angle \cite{GGRZ06}. By introducing the scalar-valued mean curvature $\kappa$ as independent unknown, in combination with the corresponding Lagrange multiplier $\sigma$ the fourth order problem is avoided by introducing two second order problems. Further, the method also works for low-order polynomials on affine triangulation as well as for arbitrary polynomially curved elements without changing any term. The gradient based shape optimization algorithm is then applied to several well-established benchmark examples, where the stationary equilibrium shapes of the Canham-Helfrich-Evans energy, including possible spontaneous curvature, are computed.

The highlights of our paper are:

\begin{itemize}
	\item novel numerical scheme to discretize the Canham-Helfrich-Evans bending energy based on a lifting of the distributional shape operator
	\item derivation of distributional curvature in context of FEM
	\item rigorous computation of first variation of the discretized bending energy
	\item numerical minimization of the bending energy using gradient-type algorithm using the first variation
\end{itemize}

\section{Notation and Problem statement}
\label{sec:notation_prob_statement}
In what follows, we will denote  by $\Surf$ a smooth $d-1$-dimensional closed submanifold in $\VR^d$, $d=2,3$, and by $\Omega\subset\VR^d$ the enclosed volume, i.e., $\Surf = \partial \Omega$ is the topological boundary of $\Omega$. We say that a function $f:\Surf \to \VR^d$ is $k$-times differentiable if there exists a neighborhood $U\subset\VR^d$ of $\Surf$ and a $k$-times differentiable function $\tilde f:U \to \VR^d$, such that, $\tilde f = f$ on $\Surf$. Given a differentiable function 
$f:\Surf \to \VR^d$ and an extension $\tilde f$, we define the tangential Jacobian and gradient of a function $f:\Surf\to\VR^d$ by
\begin{align}
\partial^{\Surf}f:=\partial \tilde f \bP_{\Surf}, && \nabla^{\Surf} f:= \bP_{\Surf}\partial \tilde f^\top.
\end{align}
Here, $\bP_{\Surf}:= \bI -\nubf\otimes\nubf:\,\VR^d\to T\Surf:=\cup_{p\in \Surf} T_p\Surf$ denotes the projection onto the tangent bundle of $\Surf$, with $(\bm{a}\otimes \bm{b})\bm{c}:= (\bm{b}\cdot \bm{c})\bm{a}$ for $\bm{a},\bm{b},\bm{c}\in \VR^d$ being the outer product, and $\nubf$ denotes the outward pointing normal vector field along $\partial \Omega$. Further, we will neglect the subscript for the Euclidean norm $\|\cdot\|_2$ and denote in three dimensions the vector cross product by $\bm{a}\times \bm{b}$.

For the discretization, let $\T_h$ be a piecewise smooth and globally continuous surface approximating $\Surf$. More precisely, let $\T_h=\{T_i\}_{i=1}^N$ with $T_i$ smooth manifolds and piecewise smooth boundary $\partial T_i$ and the vertices of $\T_h$ lie on $\Surf$. Denote by $TT_i$ the tangent bundle of the smooth manifold $T_i$. We define $T\T_h:=\cup_i TT_i$ as the discrete tangent bundle of $\T_h$ and $\bm{P}_{\T_h}:\VR^d\to T\T_h$ the corresponding projection onto the discrete tangent bundle. In 3D, on the edges we can define (normalized) tangential vectors $\taubf_L$ and $\taubf_R$ such that the co-normal (element-normal) vectors $\mubf_L:=\nubf_L\times \taubf_L$ and $\mubf_R:=\nubf_R\times \taubf_R$ are pointing outward of $T_L$ and $T_R$, respectively, see Figure \ref{fig:nv_tv_env}. We will neglect the subscripts $L$ and $R$ if the corresponding element $T$ is obvious. Integrating over volume, boundary, or edges (vertices in 2D) is denoted by $dx$, $ds$, or $d\gamma$, respectively.

\begin{figure}[h]
	\centering
	\includegraphics[width=0.27\textwidth]{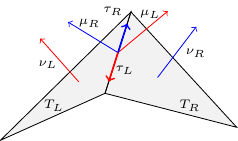}
	\caption{Normal, tangential, and co-normal (element-normal) vectors $\nubf$, $\taubf$, and $\mubf$ on two neighboured elements.}
	\label{fig:nv_tv_env}
\end{figure}

We incorporate the constraints \eqref{eq:constraints} in a weak sense using a penalty formulation, with $c_A, c_V>0$ denoting the penalty parameters:
\begin{equation}
\mathcal{J}(\Surf) = \W(\Surf) + c_A\underbrace{(|\Surf|-A_0)^2}_{=:J_{\mathrm{surf}}(\Surf)} +  c_V\underbrace{(|\Omega|-V_0)^2}_{=:J_{\mathrm{vol}}(\Omega)}.\label{eq:min_prob_constr}
\end{equation}
Other approaches such as  (augmented) Lagrangian \cite{NW06} are also possible.

Later, in Section~\ref{sec:solving_algo}, we present a procedure to improve surface area preservation if the initial shape already has the desired area.

\section{Curvature computation}
\label{sec:curvature_comp}
In this section we derive the discrete shape operator in terms of distributions, related to discrete differential geometry involving the angle of the normal vector jump between two adjacent elements. Then a variational formulation for computing the curvature is presented and further tailored to the problem of lifting only the mean curvature $H$ instead of the full shape operator. Finally, the corresponding perturbed problem is derived as preparation for the shape derivatives in Section~\ref{sec:shape_der}.

\subsection{Discrete shape operator}
\label{subsec:discrete_shape_op}
Given the shape operator $-\partial^{\Surf}\bnu:T\Surf\times T\Surf\to\VR$, also called the Weingarten tensor, on a smooth $d-1$-dimensional submanifold, the mean curvature $H=\frac{1}{d-1}\sum_{i=1}^{d-1}\kappa_i$, with $\kappa_i$ denoting principal curvatures, is computed by $\frac{1}{d-1}$  times the trace of $-\partial^{\Surf}\bnu$
\begin{align}
H = -\frac{1}{d-1}\tr{\partial^{\Surf}\bnu}.
\end{align}

Let now $\T_{h,k}$ be a triangulation of $\Surf$, where the elements $T\in \T_{h,k}$ are curved of polynomial degree $k\ge 1$ to fit the exact surface. For procedures curving the mesh appropriately for optimal isoparametric finite element we refer to \cite{Len86,DE13,Deml09}. In this work we use a projection-based interpolation procedure for curving geometries described in \cite{Demk04}. For ease of presentation we will also simply write $\T_{h}$. Given a triangulation, we define the skeleton $\E_{h,k}$ of $\T_{h,k}$ as the set of all edges or vertices of $\T_{h,k}$ in 3D or 2D, respectively. The set of all polynomials up to  order $\ell\ge 0$ on the triangulation $\T_{h,\ell}$ and - in three dimensions - skeleton $\E_{h,\ell}$ is denoted by $\Pol^\ell(\T_h)$ and $\Pol^\ell(\E_h)$, respectively.

For an affine triangulation $\T_{h,1}$ the discrete outer normal vector $\nubf$ is constant on each facet of $\T_{h,1}$ (i.e., piecewise constant) and thus, $\partial^{\Surf}\bnu|_T=0$ for all $T\in \T_{h,1}$. Moreover the normal vector may jump over the interfaces, see Figure \ref{fig:nv_jump}. Hence, the shape operator, which we refer to as discrete shape operator, can at best be a distribution and will be defined below. Our definition is also motivated by discrete differential geometry, e.g. \cite{GGRZ06}, where the angle is also used as part of the curvature computation.

\begin{figure}[h]
	\centering
	\includegraphics[width=0.17\textwidth]{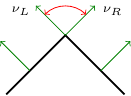}
	\caption{Jump of normal vector over two affine elements.}
	\label{fig:nv_jump}
\end{figure}

To illustrate the idea we start with a polygon curve in two dimensions and without loss of generality consider two line segments, denoted by $\hat{T}_L$ and $\hat{T}_R$, and one point $\bP=(0,0)$, where the normal vector jumps with angle $\alpha>0$ as depicted in Figure~\ref{fig:constr_approx} (a). Our goal is to derive an approximation of the curvature formula at the point $\bP$. To this end, we construct a family of $C^1$-smooth approximation of the curve parameterized by $\varepsilon>0$ sufficiently small depending on the triangulation. Starting with an $\varepsilon$-circle centered at $\bP$, we define the unique circle that goes through the same intersection points with the curve as the $\varepsilon$-circle and intersects it in a 90 degree angle, see Figure \ref{fig:constr_approx} (b). This circle with radius $r_{\varepsilon}=\varepsilon\frac{1+\cos(\alpha)}{\sin(\alpha)}$ and midpoint $\bm{M}_{\varepsilon}=(\varepsilon,-r_{\varepsilon})$ is then used as $C^1$-approximation of the junction. To be precise, the resulting curve $\T_{\varepsilon}$ consists of the remaining line segment parts $T_L$ and $T_R$ together with the connecting circle segment $T_{\varepsilon}$, Figure \ref{fig:constr_approx} (c). Thus, we can define the continuous and piecewise smooth approximated normal vector $\nubf_{\varepsilon}:\T_{\varepsilon}\to\mathbb{S}^1$ by
\begin{align}\label{eq:regu_nu_eps}
\nubf_{\varepsilon}(\bm{x})=\begin{cases}
\nubf_L &\text{ for }\bm{x}\in T_L,\\
\frac{\bm{x}-\bm{M}_{\varepsilon}}{\|\bm{x}-\bm{M}_{\varepsilon}\|} &\text{ for }\bm{x}\in T_{\varepsilon},\\
\nubf_R & \text{ for } \bm{x}\in T_R.
\end{cases} 
\end{align}
Let us now calculate the shape operator of the regularized surface. We first notice that $\|\nubf_{\varepsilon}\|=1$ and thus 
$\partial^{\Surf} \nubf_\varepsilon = \partial \nubf_\varepsilon$. Therefore fixing $\bm{x}$ near $T_\varepsilon$ we compute the $(i,j)$th entry of $\partial \nubf_\varepsilon(\bm{x})$, $\bm{x}\in T_{\varepsilon}$:
\begin{equation}
\partial_{x_i} (\nubf_\varepsilon)_j(\bm{x}) = \frac{1}{\|\bm{x}-\bm{M}_\varepsilon\|}\left( \delta_{ij} - \frac{1}{\|\bm{x}-\bm{M}_\varepsilon\|^2}( (\bm{x}-\bm{M}_\varepsilon)\cdot \bm{e}_i (\bm{x}-\bm{M}_\varepsilon)\cdot \bm{e}_j)  \right),
\end{equation}
where $\delta_{ij}$ denotes the Kronecker delta and $\bm{e}_i$ the $i$th unit-vector, $\bm{e}_i(j)=\delta_{ij}$. This can equivalently be written as 
\begin{equation}
\partial \nubf_\varepsilon(\bm{x}) = \frac{1}{r_{\varepsilon}}\mubf_{\varepsilon}\otimes \mubf_{\varepsilon}, \qquad \mubf_{\varepsilon}:=\frac{1}{r_{\varepsilon}}\mat{-(x_2-M_{\varepsilon,2})\\ x_1-M_{\varepsilon,1}}.\label{eq:reg_shape_op}
\end{equation}

Note that $\mubf_{\varepsilon}=-\mubf_{L}$ and $\mubf_{\varepsilon}=\mubf_{R}$ on the interfaces $\overline{T}_{\varepsilon}\cap \overline{T}_L$ and $\overline{T}_{\varepsilon}\cap \overline{T}_R$, where $\mubf_L$ and $\mubf_R$ are the co-normal vectors, cf. Figure~\ref{fig:nv_tv_env}. Further, there exists a continuous and bijective mapping $\Phi_{\varepsilon}:\T_h\to\T_{\varepsilon}$   given by 
\begin{align}
\Phi_{\varepsilon}(\bm{x}):=\begin{cases}
\bm{x} &\text{ for } \bm{x}\in \T_h\backslash U_{\varepsilon}(\bP),\\
\bm{M}_{\varepsilon}+\frac{r_{\varepsilon}}{\|\bm{x}-\bm{M}_{\varepsilon}\|}(\bm{x}-\bm{M}_{\varepsilon})&\text{ for } \bm{x}\in \T_h \cap U_{\varepsilon}(\bP),
\end{cases}
\end{align}
with $\Phi_{\varepsilon}\overset{\varepsilon\to 0}{\longrightarrow}\Id$.

\begin{figure}[h]
	\centering
	\begin{tabular}{ccc}
		\includegraphics[width=0.17\textwidth]{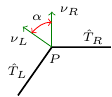}&
		\includegraphics[width=0.17\textwidth]{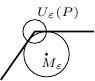}&
		\includegraphics[width=0.17\textwidth]{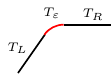}\\
		(a) & (b) & (c)
	\end{tabular}
	\caption{Construction of approximation of discrete jump. (a) The polygon curve with jump angle $\alpha$. (b) The construction of the circles. (c) The final approximated smooth curve.}
	\label{fig:constr_approx}
\end{figure}

To compute the limit $\varepsilon\to0$ we define the corresponding test function on the triangulation $\bPsi:T\T_h\times T\T_h\to\VR$ co-normal--co-normal continuous, i.e., $\bPsi_{\mubf_L\mubf_L} := \mubf_L^\top\bPsi|_{T_L}\mubf_L= \mubf_R^\top\bPsi|_{T_R}\mubf_R =: \bPsi_{\mubf_R\mubf_R} $ on the skeleton $\E_h$. Thus, the co-normal--co-normal component does not ``see'' the junction of the discretized geometry. Further it should be symmetric, as the shape operator is, and thus, is of the form $\bPsi = \Psi \mubf\otimes\mubf$ with $\Psi:\VR^2\to\VR$ a continuous function. The test function on the smoothed surface $\T_{\varepsilon}=\Phi_{\varepsilon}(\T_h)$ reads $\bPsi_{\varepsilon}=\Psi\circ\Phi_{\varepsilon}^{-1}\mubf_{\varepsilon}\otimes\mubf_{\varepsilon}$. Then, in view of $\lim_{\eps\searrow 0} \bm{M}_\eps =\bP$ and $\lim_{\eps\searrow 0} r_\eps =0$, a change of variables, and Lebesgue dominated convergence Theorem yield
\begin{align*}
\langle -\partial^{\Surf}\bnu,\bPsi\rangle_{\T_{h,1}} &:=\lim\limits_{\varepsilon\rightarrow 0} (-\partial^{\Surf}\bnu_{\varepsilon},\bPsi_{\varepsilon})_{L^2(\T_{\varepsilon})}\\
& \stackrel{\eqref{eq:reg_shape_op}}{=} \lim\limits_{\varepsilon\rightarrow 0}\int_{\Gamma_{\varepsilon}=\Phi_{\varepsilon}(\Gamma_1)}-\frac{1}{r_{\varepsilon}}\mubf_{\varepsilon}\otimes\mubf_{\varepsilon}:\Psi\circ\Phi_{\varepsilon}^{-1}\mubf_{\varepsilon}\otimes\mubf_{\varepsilon}\,ds\\
&= \lim\limits_{\varepsilon\rightarrow 0}\int_{\Gamma_{1}}-\Psi(\bm{M}_{\varepsilon}+r_{\varepsilon} \bm{x})\,ds = -\underbrace{|\Gamma_1|}_{=\alpha}\Psi(\bP)\\
& = -\int_{\bP} \sphericalangle(\nubf_L,\nubf_R)\Psi\,d\gamma,
\end{align*}

with the notation $\sphericalangle(\nubf_L,\nubf_R):=\arccos(\nubf_L\cdot\nubf_R)$ and
\begin{align}
\Gamma_{\varepsilon} := \left\{\bm{M}_{\varepsilon}+r_{\varepsilon}\mat{\cos(s)\\\sin(s)},\,s\in (\frac{\pi}{2},\frac{\pi}{2}+\alpha)\right\}, && \Gamma_{1} := \left\{\mat{\cos(s)\\\sin(s)},\,s\in (\frac{\pi}{2},\frac{\pi}{2}+\alpha)\right\}.
\end{align} 

Thus, on a general affine triangulation $\T_{h,1}$ the discrete shape operator reads
\begin{align}
\langle -\partial^{\Surf}\bnu,\bPsi\rangle_{\T_{h,1}}=-\sum_{E\in\E_h}\int_E\sphericalangle(\nubf_L,\nubf_R)\bPsi_{\mubf\mubf}\,d\gamma,\label{eq:discr_shape_op_1d_lo}
\end{align}
for all $\bPsi \in \bm{\Sigma}_h:=\{ \Psi\,\mubf\otimes\mubf\,:\,\Psi:\Surf\to\VR \text{ continuous}\}$.
In the curved case $\T_{h,k}$, $k>1$, the jump of the normal vector across elements might be smaller, but still be present. In terms of the two co-vectors $\mubf$ of these points we can apply the above procedure to obtain the distributional point curvatures, whereas away from the interfaces the element-wise classical shape operator can be applied leading to the formula
\begin{align}
\langle -\partial^{\Surf}\bnu,\bPsi\rangle_{\T_{h,k}}=-\sum_{T\in\T_h}\int_T\partial^{\Surf}\bnu|_T:\bPsi\,ds -\sum_{E\in\E_h}\int_E\sphericalangle(\nubf_L,\nubf_R)\bPsi_{\mubf\mubf}\,d\gamma,\label{eq:discr_shape_op_1d}
\end{align}
for all $\bPsi \in \bm{\Sigma}_h$.
The derivation in the affine case can be related to Steiner's offset formula \cite{Stein40}, where the surface is shifted along the discrete normal vector. Then, depending on how the appearing gap between the elements is filled, different expressions in terms of the angle are gained \cite{BPW10}, which are equivalent in the limit of vanishing angle $\alpha\to0$.

\begin{figure}[h]
	\centering
	\begin{tabular}{cc}
		\includegraphics[width=0.18\textwidth]{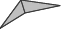}\hspace*{2cm}&
		\includegraphics[width=0.18\textwidth]{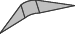}\\
		(a) \hspace*{2cm}& (b)
	\end{tabular}
	\caption{Construction of approximation of discrete jump in 3D. (a) The affine curve with junction. (b) The approximated smooth surface.}
	\label{fig:constr_approx_2d}
\end{figure}

The generalization to two-dimensional sub-manifolds in three dimensions is done in an analogous manner by smoothing the edges with an $\varepsilon$-tube, compare Figure~\ref{fig:constr_approx_2d}. As the discrete normal vector $\bnu$ is square integrable, $\bnu\in [L^2(\T_h)]^3$ its derivative components are in $H^{-1}(\T_h)$, the dual space of $H^1(\T_h):=\{u\in L^2(\T_h):\,\partial^{\Surf} u\in [L^2(\T_h)]^3\}$. Therefore the distributional parts are concentrated only on the edges, but not on the vertices. One could treat the vertex contributions by an $\varepsilon$-sphere, compute the shape operator, and take the limit $\varepsilon\to0$. However, as the regularized shape operator is of order $\mathcal{O}(\varepsilon^{-1})$ whereas the surface is of $\mathcal{O}(\varepsilon^2)$ the limit is zero. 

Therefore, analogously to the two-dimensional case, the test function  $\bPsi:\T_h\to\VR^{3\times 3}_{\mathrm{sym}}$, $\VR^{3\times 3}_{\mathrm{sym}}$ denoting the set of all $3\times3$ symmetric matrices, has to be co-normal--co-normal continuous. The jump over element interfaces is denoted by $\llbracket \bPsi_{\mubf\mubf}\rrbracket$ and the space of symmetric co-normal--co-normal continuous matrices acting on the tangent space is given by the Hellan--Herrmann--Johnson (HHJ) finite element space, see \cite{Com89} and therein references, mapped on the surface, which is a non-conforming subspace of $H(\mathrm{div\,div})$ \cite{PS11,NS19},
\begin{align}
M_h^k(\T_h):= \{\bsigma_h \in [\Pol^k(\T_h)]^{3\times 3}_{\mathrm{sym}}:\,\llbracket\bsigma_{h,\mubf\mubf}\rrbracket=0, \, \bsigma_h\nubf = \nubf^\top\bsigma_h=0 \}.\label{eq:fespace_hhj}
\end{align}
To construct such a finite element space one can start in the flat two-dimensional case and then map the resulting elements onto the surface by using the so-called Piola transformation to preserve the normal-normal continuity, we refer to \cite{PS11} for an explicit construction and additional properties. 

As will be discussed in the following subsections, the matrix valued curvature tensor is going to be reduced to a scalar quantity representing the mean curvature and thus, we do not get into further details with respect to matrix valued spaces.

\subsection{Variational formulation}
\label{subsec:variational_formulation}
The corresponding variational problem for \eqref{eq:discr_shape_op_1d} computing the lifted discrete Weingarten tensor $\bkappa$ of the distributional curvature reads: Find $\bkappa\in M^k_h(\T_h)$ such that for all $\delta \bkappa \in M^k_h(\T_h)$
\begin{align}
\int_{\T_h} \bkappa:\delta\bkappa \,ds = -\sum_{T\in\T_h}\int_T \partial^{\Surf}\bnu:\delta\bkappa\,ds - \sum_{E\in\E_h}\int_E\sphericalangle(\nubf_L,\nubf_R)\delta\bkappa_{\mubf\mubf}\,d\gamma,
\end{align}
where we use the notation $\int_{\T_h}:=\sum_{T\in \T_h}\int_T$ if the involved fields are in $L^2(\T_h)$. We introduce the averaged normal vector
\begin{align}
\label{eq:av_normal_vector}
\Av{\nubf}:=\frac{\nubf_L+\nubf_R}{\|\nubf_L+\nubf_R\|},
\end{align}
which is independent of the dimension, triangle size, or polynomial order of approximation of the surface.

As discussed in \cite{NS19,Neun21} the jump terms can be reordered yielding
\begin{align}\label{eq:weak_form_normal}
\int_{\T_h} \bkappa:\delta\bkappa \,ds &= -\sum_{T\in\T_h}\left(\int_T \partial^{\Surf}\bnu:\delta\bkappa\,ds + \int_{\partial T}\sphericalangle(\nubf,\Av{\nubf})\delta\bkappa_{\mubf\mubf}\,d\gamma\right)\nonumber\\
&=-\sum_{T\in\T_h}\left(\int_T \partial^{\Surf}\bnu:\delta\bkappa\,ds + \int_{\partial T}\left(\frac{\pi}{2}-\sphericalangle(\mubf,\Av{\nubf})\right)\delta\bkappa_{\mubf\mubf}\,d\gamma\right).
\end{align}
The latter (equivalent) formulation is numerically more stable as the derivative of $\arccos(x)$ has a singularity at $x=1$ and we expect the discrete shapes to be close of being smooth such that $\nubf\cdot\Av{\nubf}\approx 1$ but $\mubf\cdot\Av{\nubf}\approx 0$.

With this preliminary work, the minimization problem together with the constraints \eqref{eq:min_prob_constr} can be described by the Lagrange functional
\begin{equation}
\begin{split}
\label{eq:lag_func_def_curvature}
\mathcal{L}(\T_h,\bkappa,\bsigma) &:= \sum_{T\in\T_h}\bigg(\int_{T}2\kappa_b\left(\frac{1}{2}\tr{\bkappa}-H_0\right)^2 + (\bkappa + \partial^{\Surf}\bnu):\bsigma\,ds \\
& +\int_{\partial T}\left(\frac{\pi}{2}-\sphericalangle(\mubf,\Av{\nubf})\right)\bsigma_{\mubf\mubf}\,d\gamma\bigg)+c_A\,J_{\mathrm{surf}}(\T_h)+c_V\,J_{\mathrm{vol}}(\T_h),
\end{split}
\end{equation}
where $\bsigma\in M^k_h(\T_h)$ is the Lagrange multiplier forcing the ``lifting'' $\bkappa=-\nabla^{\Surf}\bnu$ having the physical meaning of a moment tensor. Lagrangian \eqref{eq:lag_func_def_curvature} can be seen as a three-field formulation involving the shape (or equivalently the displacement), the independent shape operator field $\bkappa$, and the moment tensor $\bsigma$. As discussed in the following section, the formulation can be reduced as only the trace of $\bkappa$ will enter the bending energy of the shape. Therefore there is no need to lift the whole shape operator $-\partial^{\Surf}\bnu$.

\subsection{Reduction for mean curvature}
\label{subsec:reduction_mean_curv}
In \eqref{eq:lag_func_def_curvature} the full shape operator $-\partial^{\Surf}\nubf$ is lifted to $\bkappa$ via the Lagrange parameter $\bsigma$. However, only the trace, $\tr{\bkappa}$, is involved in the final energy. Thus, we would ``waste'' computational effort lifting the deviatoric part of $-\partial^{\Surf}\nubf$. Further, using e.g. the lowest order HHJ elements $M_h^0(\T_h)$ \eqref{eq:fespace_hhj} its trace is only a constant per element. The degrees of freedom of $M_h^0$, however, are constants placed at the edges $\E_h$ of the triangulation. As there are more edges than triangles on a closed surface, $\#E=\frac{3}{2}\#T$, this yields a huge kernel of the trace operator. That means, deformations of the shape can occur producing mean curvature but lying in the kernel of the trace of $\bkappa$ yielding a non-robust formulation.

With this motivation, we adapt \eqref{eq:lag_func_def_curvature} by inserting only test-functions $\bsigma$, which are deviatoric-free, i.e. $\bsigma=\sigma \bP_{\T_h}$, where $\sigma:\T_h\to\VR$. From the co-normal--co-normal continuity of $\bsigma$ we deduce that with $\bsigma_{\mubf\mubf} = \mubf^\top (\sigma\,\bP_{\T_h})\mubf=\sigma$ the reduced field $\sigma$ is continuous and thus in $H^1(\T_h)$. The volume term of \eqref{eq:lag_func_def_curvature} changes to
\begin{align*}
\int_{T}2\kappa_b\left(\frac{1}{2}\tr{\bkappa}-H_0\right)^2 + (\tr{\bkappa} + \tr{\partial^{\Surf}\bnu})\sigma\,ds,
\end{align*}
such that only the trace part of $\bkappa$ needs to be considered, $\bkappa=\kappa\,\bP_{\T_h}$ with $\kappa\in H^1(\T_h)$.

With the $H^1$-conforming Lagrangian finite element space
\begin{align}
V_h^k(\T_h):= \{u\in \Pol^k(\T_h)\,:\, u \text{ continuous on }\T_h \}\subset H^1(\T_h).\label{eq:lagrangian_fe}
\end{align}
the reduced curvature problem reads: Find $\kappa\in V_h^k(\T_h)$ such that for all $\delta\kappa\in V_h^k(\T_h)$
\begin{align}\label{eq:weak_form_mean_curv}
\int_{\T_h} \kappa\,\delta\kappa \,ds =-\sum_{T\in\T_h}\Big(\int_T \tr{\partial^{\Surf}\bnu}\delta\kappa\,ds + \int_{\partial T}\left(\frac{\pi}{2}-\sphericalangle(\mubf,\Av{\nubf})\right)\delta\kappa\,d\gamma\Big).
\end{align}
With the same arguments for $\bsigma$ the reduced final Lagrangian reads for $\kappa,\sigma\in V_h^k(\T_h)$
\begin{align}
\begin{split}
\label{eq:red_lag_func_def_curvature}
\mathcal{L}(\T_h,\kappa,\sigma) &:= \sum_{T\in\T_h}\bigg(\int_{T}2\kappa_b\left(\frac{1}{2}\kappa-H_0\right)^2 + (\kappa + \tr{\partial^{\Surf}\bnu})\sigma\,ds \\
& +\int_{\partial T}\left(\frac{\pi}{2}-\sphericalangle(\mubf,\Av{\nubf})\right)\sigma\,d\gamma\bigg)+c_A\,J_{\mathrm{surf}}(\T_h)+c_V\,J_{\mathrm{vol}}(\T_h).
\end{split}
\end{align}
We note, that the mean curvature (vector) has been added as additional unknown in several works \cite{Rusu05,Dziuk08,BGN08,BNS10}, which can be interpreted as mixed formulation. Therein, however, the mean curvature (vector) has been considered to eliminate all terms involving the normal vector $\bnu$. In this work we still use the normal vector and the additionally involved quantities $\kappa$ and $\sigma$ are only scalar-valued.

In Section~\ref{sec:shape_der} we will discuss the shape derivative of \eqref{eq:red_lag_func_def_curvature}. These computations are not straight forward due to the non-standard jump term.

\subsection{Perturbed problem}
\label{subsec:perturbed_problem}
As preparation for the shape derivatives, we will now introduce perturbations of the triangulation $\T_h$ using vector fields $\VX\in [W^{1,\infty}(\VR^d)]^d$, where $[W^{1,\infty}(\VR^d)]^d$ denotes the space of Lipschitz continuous functions, which will be discretized with $H^1$-conforming finite elements. Further, we will focus on the three-dimensional case, the two-dimensional one follows the same lines. In the following let $\VX\in [V_h^k(\T_h)]^3$, $k\ge 1$, be a vector field. Then we consider \eqref{eq:weak_form_mean_curv} on the family of perturbed domains: 
\[
\T_h^t:= \{\Tt(T):\; T\in \T_h\}, \qquad \Tt(\bm{x})= \bm{x} + t\,\VX(\bm{x}),\quad \bm{x}\in\T_h, \quad\text{ for } t\ge 0 \text{ small. } 
\]
Find $\kappa_{t}\in V_h^k(\T^t_h)$, such that 
\begin{align}\label{eq:weak_form_normal_per}
\int_{\T^t_h} \kappa_{t}\,\delta\kappa_{t} \,ds = -\sum_{T\in\T_h^t}\left(\int_T \tr{\partial^{S_t}\bnu_{t}}\delta\kappa_{t}\,ds + \int_{\partial T}\left(\frac{\pi}{2}-\sphericalangle(\mubf_{t},\Av{\nubf}_t)\right)\delta\kappa_{t}\,d\gamma\right)
\end{align}
for all $\delta \kappa_{t} \in V_h^k(\T^t_h)$. We emphasize, that the polynomial order of the initial and perturbed triangulation fit with the polynomial degree used for the vector field $\VX\in [V_h^k(\T_h)]^3$, i.e., $\T_h = \T_{h,k}$ and $\T^t_h = \T^t_{h,k}$.

To compute the perturbed averaged normal vector $\Av{\nubf}_t$ information of two neighboured elements are required at once. Under the assumption that the perturbation is ``small enough'' instead of measuring the angle with the perturbed averaged normal vector $\Av{\nubf}_t$, we can use the unperturbed one. Starting in 2D for the derivation we consider the situation demonstrated in Figure \ref{fig:angle_comp_av_nv} where the unperturbed averaged normal vector is used to compute the angle in (c). Only, if the perturbation is too large, such that the unperturbed averaged normal vector $\Av{\bnu}$ does not remain between the perturbed co-normal vectors $\mubf_{R,t}$ and $\mubf_{L,t}$, a wrong angle is measured, which needs to be avoided. In the three dimensional setting, however, one has additionally to project $\Av{\nubf}\circ \Tt^{-1}$ to the plane orthogonal to the perturbed tangent vector $\taubf_t$ and renormalize it to measure the correct angle
\begin{align}
\bP_{\taubf_{t}}^{\perp}(\Av{\nubf}):=\frac{\Av{\nubf}\circ\Tt^{-1}-(\Av{\nubf}\circ\Tt^{-1}\cdot \taubf_{t}) \taubf_{t}}{\|\Av{\nubf}\circ\Tt^{-1}-(\Av{\nubf}\circ\Tt^{-1}\cdot \taubf_{t}) \taubf_{t}\|}.\label{eq:proj_edge_ortho}
\end{align}
A simple example for demonstrating the necessity of projection \eqref{eq:proj_edge_ortho} is given as follows: if the two elements rotate by $\Tt$ around the axis $\Av{\nubf}\times \taubf_L$ no change of angle occurs. However, with $\sphericalangle(\mubf_{t},\Av{\nubf})$ a too small angle is now measured from both sides indicating a wrong change of curvature.

Therefore, \eqref{eq:weak_form_normal_per} changes to
\begin{align}\label{eq:weak_form_normal_per_angle}
\int_{\T^t_h} \kappa_{t}\,\delta\kappa_{t} \,ds = -\sum_{T\in\T_h^t}\left(\int_T \tr{\partial^{S_t}\bnu_{t}}\,\delta\kappa_{t}\,ds +  \int_{\partial T}\left(\frac{\pi}{2}-\sphericalangle(\mubf_{t},\bP_{\taubf_{t}}^{\perp}(\Av{\nubf}))\right)\delta\kappa_{t}\,d\gamma\right)
\end{align}
for all $\delta \kappa_t \in V_h^k(\T^t_h)$.

\begin{figure}[h]
	\centering
	\begin{tabular}{ccc}
		\includegraphics[width=0.2\textwidth]{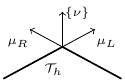}&
		\includegraphics[width=0.2\textwidth]{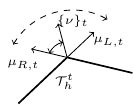}&
		\includegraphics[width=0.2\textwidth]{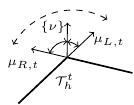}\\
		(a)&(b)&(c)
	\end{tabular}
	\caption{ Angle computation in two-dimensional setting. (a) Unperturbed surface. (b) Perturbed averaged normal vector on perturbed surface. (c) Unperturbed averaged normal vector on perturbed surface.}
	\label{fig:angle_comp_av_nv}
\end{figure}

A rigorous proof of the equivalence of \eqref{eq:weak_form_normal_per} and \eqref{eq:weak_form_normal_per_angle} is provided by the following lemma.

\begin{lemma}
	There holds for all sufficiently small $|t|$:
	\begin{equation}\label{eq:equivalence}
	\sum_{T\in\T_h^t}\int_{\partial T}\sphericalangle(\mubf_t,\Av{\nubf}_t)\,d\gamma =\sum_{T\in\T_h^t}\int_{\partial T}\sphericalangle(\mubf_t,\bP_{\taubf_{t}}^\perp(\Av{\nubf}))\,d\gamma.
	\end{equation}
\end{lemma}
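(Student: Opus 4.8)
The plan is to prove the identity edge by edge: since both sides are sums of integrals over element boundaries $\partial T$, and each interior edge $E$ of $\T_h^t$ is shared by exactly two elements $T_L$ and $T_R$, it suffices to show that for each edge $E$ the contribution from the two sides agrees, i.e.\ that
\begin{equation*}
\int_E \bigl(\sphericalangle(\mubf_{t,L},\Av{\nubf}_t) + \sphericalangle(\mubf_{t,R},\Av{\nubf}_t)\bigr)\,d\gamma
= \int_E \bigl(\sphericalangle(\mubf_{t,L},\bP_{\taubf_{t}}^\perp(\Av{\nubf})) + \sphericalangle(\mubf_{t,R},\bP_{\taubf_{t}}^\perp(\Av{\nubf}))\bigr)\,d\gamma.
\end{equation*}
So the whole lemma reduces to a pointwise statement on a single edge about four unit vectors living in (essentially) a two-dimensional picture.

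First I would set up the local geometry at a point on the edge $E\subset\partial T_t$: there is the common unit tangent $\taubf_t$, and the plane $\Pi := \taubf_t^\perp$. The co-normals $\mubf_{t,L}=\nubf_{t,L}\times\taubf_{t,L}$ and $\mubf_{t,R}=\nubf_{t,R}\times\taubf_{t,R}$ both lie in $\Pi$ (each is orthogonal to its own normal and to the common tangent). The key observation is that the angle $\sphericalangle(\mubf_t,\cdot)$ only depends on the component of its second argument in the plane $\Pi$: indeed $\arccos(\mubf_t\cdot\bm{w})$ for a \emph{unit} vector $\bm{w}$ equals $\arccos(\mubf_t\cdot \bP^{\perp}_{\taubf_t}(\bm{w})\,\|\bP^\perp_{\taubf_t}(\bm{w})\|^{-1}\cdot\|\bP^\perp_{\taubf_t}(\bm{w})\|)$, which is not literally the same number. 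The cleaner route is to work with the \emph{complementary} angle identity already used in \eqref{eq:weak_form_normal}, namely $\sphericalangle(\mubf_t,\bm{w}) = \tfrac{\pi}{2} - \sphericalangle(\taubf_t\text{-orthogonal part})$ — more precisely, decompose $\bm{w}=\Av{\nubf}_t$ (a unit vector) as $\bm{w}=(\bm{w}\cdot\taubf_t)\taubf_t + \bm{w}_\Pi$ with $\bm{w}_\Pi\in\Pi$; then since $\mubf_t\perp\taubf_t$ we get $\mubf_t\cdot\bm{w}=\mubf_t\cdot\bm{w}_\Pi = \|\bm{w}_\Pi\|\,(\mubf_t\cdot\widehat{\bm{w}_\Pi})$ where $\widehat{\bm{w}_\Pi}=\bP^\perp_{\taubf_t}(\bm{w})$. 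So $\mubf_t\cdot\bm{w}$ and $\mubf_t\cdot\widehat{\bm{w}_\Pi}$ differ only by the positive factor $\|\bm{w}_\Pi\|\le 1$, hence the two arccosines are \emph{not} equal termwise — which is exactly why the statement is about the \emph{sum} over $T_L$ and $T_R$ and not each term separately.

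The heart of the argument is therefore a planar lemma: given two unit vectors $\mubf_L,\mubf_R$ in the plane $\Pi$ and any vector $\bm{v}\in\VR^3$ with nonzero projection $\bm{v}_\Pi$ onto $\Pi$, one has
\begin{equation*}
\sphericalangle(\mubf_L,\bm{v}) + \sphericalangle(\mubf_R,\bm{v}) = \sphericalangle(\mubf_L,\bm{v}_\Pi) + \sphericalangle(\mubf_R,\bm{v}_\Pi),
\end{equation*}
provided $\bm{v}$ and $\bm{v}_\Pi$ lie ``between'' $\mubf_L$ and $\mubf_R$ in the appropriate sense. The point is that $\sphericalangle(\mubf_L,\bm{v})-\sphericalangle(\mubf_L,\bm{v}_\Pi)$ and $\sphericalangle(\mubf_R,\bm{v})-\sphericalangle(\mubf_R,\bm{v}_\Pi)$ are equal in magnitude and opposite in sign, because tilting $\bm{v}_\Pi$ out of $\Pi$ toward $\taubf_t$ increases the angle to one co-normal by exactly the amount it decreases the angle to the other — geometrically, the orthogonal plane $\mubf_t^\perp$ sweeps symmetrically. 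I would make this precise by writing $\Av{\nubf}_t$ in coordinates adapted to $\Pi$ (using that $\Av{\nubf}_t\circ\Tt^{-1}$ and $\Av{\nubf}\circ\Tt^{-1}$ have the same projection onto $\Pi$, up to normalization — this uses the definition \eqref{eq:proj_edge_ortho} and that $\Av{\nubf}_t$, $\Av{\nubf}$ only differ by their $\taubf_t$-components on the edge, since averaging commutes with the perturbation in the tangential direction along $E$), and then evaluating the four dot products directly.

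The main obstacle I anticipate is bookkeeping the orientations and signs: one must check that $\mubf_{t,L}$ and $\mubf_{t,R}$ point to \emph{opposite} sides of the averaged normal within $\Pi$ (this is where $\mubf_\eps=-\mubf_L$, $\mubf_\eps=\mubf_R$ from the discussion around \eqref{eq:reg_shape_op} is the analogue), so that the two angle-differences genuinely cancel rather than add; and one must verify that $\Av{\nubf}_t$ really does have the same $\Pi$-projection-direction as $\bP^\perp_{\taubf_t}(\Av{\nubf})$, which is a small computation using $\nubf_{t}\perp\taubf_t$ for each element and the definition of the average. Once the sign conventions are pinned down, the identity is a one-line trigonometric cancellation; getting there cleanly is the real work, and I would lean on a figure (as the paper does) to fix the configuration.
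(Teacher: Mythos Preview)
Your reduction to a single edge and the identification of the plane $\Pi=\taubf_t^\perp$ are exactly right, but the mechanism you propose for the cancellation is mistaken, and the ``small computation'' you defer to at the end would in fact fail.

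The first point you missed is that $\Av{\nubf}_t$ already lies in $\Pi$: since $\nubf_{L,t}\perp\taubf_t$ and $\nubf_{R,t}\perp\taubf_t$ on the common edge, their normalized average is orthogonal to $\taubf_t$ as well. So there is no ``tilting out of $\Pi$'' to undo. Moreover, $\Av{\nubf}_t$ and $\bP_{\taubf_t}^\perp(\Av{\nubf})$ are \emph{not} the same vector and do \emph{not} have the same direction in $\Pi$ in general; the perturbed average of the normals and the projected unperturbed average are genuinely different unit vectors in $\Pi$. Your proposed verification that they agree would not go through.

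Second, your ``planar lemma'' is false as stated. If $\mubf_L,\mubf_R\in\Pi$ are unit vectors and $\bm v$ is a unit vector with $\bm v=\cos\theta\,\hat{\bm v}_\Pi+\sin\theta\,\taubf_t$, then $\mubf_L\cdot\bm v=\cos\theta\,(\mubf_L\cdot\hat{\bm v}_\Pi)$ and likewise for $\mubf_R$. In the actual configuration one checks $\mubf_{L,t}\cdot\Av{\nubf}_t=\mubf_{R,t}\cdot\Av{\nubf}_t$ (both equal $\det(\nubf_{L,t},\taubf_t,\nubf_{R,t})/\|\nubf_{L,t}+\nubf_{R,t}\|$), so the two dot products are scaled by $\cos\theta$ in the \emph{same} direction, and the two arccosines move the same way rather than cancelling. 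More generally, $\arccos(a\cos\theta)+\arccos(b\cos\theta)$ is independent of $\theta$ only when $a=-b$.

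The correct mechanism, used in the paper, is an additive--angle argument: since both $\Av{\nubf}_t$ and $\bP_{\taubf_t}^\perp(\Av{\nubf})$ are unit vectors in $\Pi=\mathrm{span}\{\mubf_{L,t},\mubf_{R,t}\}$ lying \emph{between} $\mubf_{L,t}$ and $\mubf_{R,t}$ (for small enough perturbations), the elementary fact $\sphericalangle(\mubf_{L,t},\Vc)+\sphericalangle(\Vc,\mubf_{R,t})=\sphericalangle(\mubf_{L,t},\mubf_{R,t})$ for any such $\Vc$ shows that both edge contributions equal the intrinsic dihedral angle $\sphericalangle(\mubf_{L,t},\mubf_{R,t})$, which does not see the choice of intermediate vector at all. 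That is the one-line identity you were looking for; the work is only in checking the ``betweenness'' hypothesis for each of the two candidate vectors.
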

\begin{proof}
	First, we rewrite the left and right hand side in \eqref{eq:equivalence} as sum over edges
	\begin{align*}
	\sum_{T\in\T_h^t}\int_{\partial T}\sphericalangle(\mubf_t,\Av{\nubf}_t)\,d\gamma &= \sum_{E\in\E^t_h}\int_E \sphericalangle(\mubf_{L,t},\Av{\nubf}_t)+\sphericalangle(\mubf_{R,t},\Av{\nubf}_t)\,d\gamma,\\
	\sum_{T\in\T_h^t}\int_{\partial T}\sphericalangle(\mubf_t,\bP_{\taubf_{t}}^\perp(\Av{\nubf}))\,d\gamma &= \sum_{E\in\E^t_h}\int_E \sphericalangle(\mubf_{L,t},\bP_{\taubf_{t}}^\perp(\Av{\nubf}))+\sphericalangle(\mubf_{R,t},\bP_{\taubf_{t}}^\perp(\Av{\nubf}))\,d\gamma.
	\end{align*}
	Note that $\Av{\nubf}_t$ and $\bP_{\taubf_{t}}^\perp(\Av{\nubf})$ are single valued on $E$. Now, it is sufficient to prove that for all edges $E\in\E_h^t$
	\begin{align}
	\label{eq:edge_wise_angles}
	\int_E \sphericalangle(\mubf_{L,t},\Av{\nubf}_t)+\sphericalangle(\mubf_{R,t},\Av{\nubf}_t)\,d\gamma=\int_E \sphericalangle(\mubf_{L,t},\bP_{\taubf_{t}}^\perp(\Av{\nubf}))+\sphericalangle(\mubf_{R,t},\bP_{\taubf_{t}}^\perp(\Av{\nubf}))\,d\gamma.
	\end{align}
	We start with the left side. By noting that (cf. Figure~\ref{fig:nv_tv_env}) the perturbed tangent vector $\taubf_{R,t}=-\taubf_{L,t}$ are orthogonal to the perturbed co-normal vectors. Further, they are perpendicular to the averaged normal vector, $\Av{\nubf}_t \perp \taubf_{L,t},\taubf_{R,t}$. In combination with $\dim(\text{span}\{\mubf_{L,t},\mubf_{R,t}\})=2$ (for sufficiently small perturbations) we obtain that the perturbed averaged normal vector is in the plane spanned by the two co-normal vectors 
	\begin{align*}
	\Av{\nubf}_t \in \text{span}\{\mubf_{L,t},\mubf_{R,t}\}.
	\end{align*}
	Further $\Av{\nubf}_t$ is normalized by construction and (for fine enough grids and small enough perturbations, see Figure~\ref{fig:nv_tv_env} and Figure~\ref{fig:angle_comp_av_nv}) there exists an $\alpha\in [0,1]$ such that $\Av{\nubf}_t \in \text{span}\{\alpha\,\mubf_{L,t}+(1-\alpha)\mubf_{R,t}\}$. Thus, the requirements of Lemma~\ref{lem:additive_angle} in Appendix~\ref{sec:angle_equivalence} are fulfilled and we have
	\begin{align*}
	\sphericalangle(\mubf_{L,t},\Av{\nubf}_t)+\sphericalangle(\mubf_{R,t},\Av{\nubf}_t) = \sphericalangle(\mubf_{L,t},\mubf_{R,t}).
	\end{align*}
	For the right side of \eqref{eq:edge_wise_angles} we already  showed $\mubf_{R,t},\mubf_{L,t}\perp\taubf_t$ and $\dim(\text{span}\{\mubf_{L,t},\mubf_{R,t}\})=2$. By construction of the projected averaged normal vector we further have
	\begin{align*}
	\|\bP_{\taubf_{t}}^\perp(\Av{\nubf})\|=1,\qquad \bP_{\taubf_{t}}^\perp(\Av{\nubf}) \perp \taubf_t
	\end{align*}
	and thus, under the assumption that the perturbation is small enough, we get with the same argument as before
	\begin{align*}
	\sphericalangle(\mubf_{L,t},\bP_{\taubf_{t}}^\perp(\Av{\nubf}))+\sphericalangle(\mubf_{R,t},\bP_{\taubf_{t}}^\perp(\Av{\nubf})) = \sphericalangle(\mubf_{L,t},\mubf_{R,t})
	\end{align*}
	and by transitivity
	\begin{align*}
	\sphericalangle(\mubf_{L,t},\bP_{\taubf_{t}}^\perp(\Av{\nubf}))+\sphericalangle(\mubf_{R,t},\bP_{\taubf_{t}}^\perp(\Av{\nubf})) = \sphericalangle(\mubf_{L,t},\Av{\nubf}_t)+\sphericalangle(\mubf_{R,t},\Av{\nubf}_t).
	\end{align*}
\end{proof}

We will see that the projection $\bP_{\taubf_{t}}^{\perp}(\cdot)$ does not induce a term in the first shape derivative, compare Lemma~\ref{lem:shape_der_wg_jump}. Therefore one could neglect it if a gradient based algorithm is applied to solve the problem numerically. For a shape Newton algorithm, where additionally the second shape derivative is involved, however, the projection induces additional terms and cannot be omitted. Thus, for sake of completeness, it is kept in the following.

Before transforming the perturbed geometric quantities back to the initial shape $\T_h$ we define the following basic properties.
Let $\Phi: \hat{\Surf}\rightarrow \Surf$ be a mapping between two manifolds. For scalar functions $f: \Surf\to \VR$ and the identity matrix $\bI$ we have the chain rule
\begin{equation}
\nabla^{\Surf} f \circ \Phi = \bm{A} \nabla^{\hat S} (f\circ \Phi), \quad \bm{A} := \left(\bI - \frac{\partial \bm{\Phi}^{-\top} \nubf}{|\partial \bm{\Phi}^{-\top} \nubf|}\otimes \frac{\partial \bm{\Phi}^{-\top} \nubf}{|\partial \bm{\Phi}^{-\top} \nubf|}\right)\partial \bm{\Phi}^{-\top} 
\end{equation}
and for a vector valued function $f:\Surf\to\VR^3$
\begin{equation}
\partial^{\Surf} \bm{f} \circ \Phi = \partial^{\hat S} (\bm{f}\circ \Phi)\bm{A}^\top.\label{eq:surfgradtrafo_vv}
\end{equation}
Further, with $\Surf_t=\Tt(\Surf)$, $\Tt(\bm{x}):= \bm{x}+t\VX$, $\VX\in [\CC^1(\VR^3)]^3$, there holds
\begin{equation}
\bm{A}^\top(t) = \pTt^{-1}\left(\bI - \frac{\pTt^{-\top} \nubf}{\|\pTt^{-\top} \nubf\|}\otimes \frac{\pTt^{-\top} \nubf}{\|\pTt^{-\top} \nubf\|}\right),\qquad \bm{A}^\top(0) = \bI - \nubf\otimes \nubf\label{eq:gradtrafo}
\end{equation}
and
\begin{equation}
\label{eq:shape_der_A}
\begin{split}
(\bm{A}^\top)'(0) &= - \partial \bX (\bI - \nubf\otimes \nubf) + \partial^{\Surf} \bX^\top\nubf\otimes \nubf + \nubf\otimes \partial^{\Surf} \bX^\top\nubf\\
&=- \partial^{\Surf} \bX+ 2\,\mathrm{Sym}(\nubf\otimes \nubf\partial^{\Surf} \bX).
\end{split}
\end{equation}
Here, $C^1(\VR^d)$ denotes the set of continuous differentiable functions $f:\VR^d\to\VR$ and $\CC^1(\VR^d)$ the set of continuous differentiable functions with compact support.

Next, we define element-wise the transformation determinants
\begin{align}
\label{eq:wt_wte}
& w_t := \det(\pTt)\|\pTt^{-\top} \nubf\| \quad \text{ and } \quad  w_t^E := \|\pTt \taubf\|.
\end{align}
It is readily checked that we have on the initial triangulation $\T_h$
\begin{equation}\label{eq:tauSt_nuSt}
\taubf^t:=\taubf_t\circ\Tt = \frac{\pTt \taubf}{\|\pTt \taubf\|} \quad \text{ and  } \quad \nubf^t:=\nubf_t\circ \Tt = \frac{\pTt^{-\top} \nubf}{\|\pTt^{-\top} \nubf\|}. 
\end{equation}
We define $\mubf^t := \nubf^t\times \taubf^t$ and could use \eqref{eq:tauSt_nuSt} for the pull-back of $\mubf^t$. However, there exists a more compact form:
\begin{lemma}\label{lem:mubft}
	With $\bA(t):= (\bI-\nubf^t\otimes\nubf^t)\pTt^{-\top}$ we have for all $|t|$ sufficiently small,
	\begin{equation}\label{eq:mu_St}
	\mubf^t:=\mubf_t\circ\Tt = \frac{\bA(t)\mubf}{\|\bA(t) \mubf\|}.
	\end{equation}
\end{lemma}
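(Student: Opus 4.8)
The plan is to verify the identity \eqref{eq:mu_St} by expressing $\mubf^t = \nubf^t \times \taubf^t$ through the pull-back formulas \eqref{eq:tauSt_nuSt} and then recognizing the cross product as a matrix-vector product involving $\partial\Tt^{-\top}$. The starting point is that $\mubf_t$ is, by definition, the co-normal vector: the unit vector in the tangent plane of $T_t$, orthogonal to the edge tangent $\taubf_t$, pointing outward. Equivalently, $\mubf^t$ is characterized (up to sign) by $\mubf^t \perp \nubf^t$, $\mubf^t \perp \taubf^t$, and $\|\mubf^t\|=1$. So I would first check that $\bA(t)\mubf / \|\bA(t)\mubf\|$ satisfies these three conditions, and then pin down the sign.

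First I would show $\bA(t)\mubf \perp \nubf^t$. Since $\bA(t) = (\bI - \nubf^t\otimes\nubf^t)\pTt^{-\top}$, its range lies in $(\nubf^t)^\perp$ because the projector $\bI - \nubf^t\otimes\nubf^t$ kills the $\nubf^t$-component; hence $\nubf^t \cdot \bA(t)\mubf = 0$ automatically. Next I would show $\bA(t)\mubf \perp \taubf^t$. Using \eqref{eq:tauSt_nuSt}, $\taubf^t = \pTt\taubf/\|\pTt\taubf\|$, so up to a positive scalar it suffices to show $(\pTt\taubf)\cdot \bA(t)\mubf = 0$. Expanding, $(\pTt\taubf)^\top (\bI-\nubf^t\otimes\nubf^t)\pTt^{-\top}\mubf = \taubf^\top \pTt^\top \pTt^{-\top}\mubf - (\pTt\taubf\cdot\nubf^t)(\nubf^t\cdot\pTt^{-\top}\mubf) = \taubf\cdot\mubf - (\pTt\taubf\cdot\nubf^t)(\nubf^t\cdot\pTt^{-\top}\mubf)$. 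The first term vanishes since $\taubf\perp\mubf$ on the reference element. For the second term I would use $\nubf^t = \pTt^{-\top}\nubf/\|\pTt^{-\top}\nubf\|$ from \eqref{eq:tauSt_nuSt}: then $\pTt\taubf\cdot\pTt^{-\top}\nubf = \taubf\cdot\nubf = 0$ (again reference orthogonality), so the second term vanishes as well. This establishes that $\bA(t)\mubf$ is orthogonal to both $\nubf^t$ and $\taubf^t$, hence parallel to $\mubf^t$; after normalization the two unit vectors agree up to sign.

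The remaining — and slightly more delicate — step is the sign: one must confirm that $\bA(t)\mubf$ points to the same side as $\mubf^t$ (outward from $T_t$), rather than the opposite. Here I would argue by continuity in $t$: at $t=0$, $\pTt = \bI$, so $\bA(0) = \bI - \nubf\otimes\nubf = \bP_\Surf$, and $\bA(0)\mubf = \bP_\Surf\mubf = \mubf$ since $\mubf$ is already tangential; thus \eqref{eq:mu_St} holds at $t=0$ with the correct sign. For small $t$ the map $t\mapsto \bA(t)\mubf$ is continuous and never vanishes (we just showed it is a nonzero multiple of the unit vector $\mubf^t$), so $\bA(t)\mubf/\|\bA(t)\mubf\|$ is a continuous unit-vector-valued function agreeing with $\mubf^t$ at $t=0$; since $\mubf^t$ is itself continuous in $t$ and the only other continuous candidate is $-\mubf^t$, the sign cannot flip, and equality \eqref{eq:mu_St} persists for all $t\ge 0$ in the admissible range. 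The main obstacle I anticipate is precisely this sign/orientation bookkeeping — making the continuity argument airtight requires knowing that $\pTt$ stays invertible and orientation-preserving (true for $t$ small, since $\VX$ is Lipschitz and $\det\pTt\to 1$), which is exactly the standing "perturbation small enough" assumption already in force; the orthogonality computations themselves are routine once \eqref{eq:tauSt_nuSt} is invoked.
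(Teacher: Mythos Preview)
Your orthogonality checks are essentially identical to the paper's: both verify $\bA(t)\mubf\perp\nubf^t$ via the projector and $\bA(t)\mubf\perp\taubf^t$ by reducing to $\mubf\cdot\taubf=0$ on the reference element. The only real difference is how the sign is fixed. The paper computes $\det(\nubf^t,\taubf^t,\bA(t)\mubf/\|\bA(t)\mubf\|)$ explicitly, manipulates it down to $\det(\pTt^{-\top})\|\pTt\taubf\|/\|\pTt^{-\top}\nubf\|>0$, and concludes the triple is positively oriented, hence $\bA(t)\mubf/\|\bA(t)\mubf\|=\nubf^t\times\taubf^t=\mubf^t$. Your continuity argument from $t=0$ is equally valid and arguably cleaner; the explicit determinant has the small advantage of giving the result for each admissible $t$ without invoking connectedness of the parameter interval. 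One minor point: your claim that $\bA(t)\mubf$ ``never vanishes'' is not quite justified by saying it is ``a nonzero multiple of $\mubf^t$'' (parallel does not imply nonzero); the direct argument is that $\bA(t)\mubf=0$ would force $\pTt^{-\top}\mubf\parallel\pTt^{-\top}\nubf$, hence $\mubf\parallel\nubf$ by invertibility of $\pTt^{-\top}$, a contradiction.
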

\begin{proof}
	Since $|t|$ is sufficiently small $T_t$ and $\partial T_t$ are invertible. We compute using the formulas \eqref{eq:tauSt_nuSt} for $\nubf^t$ and $\taubf^t$:
	\begin{align*}
	&\mubf^t \cdot\nubf^t=\frac{\pTt^{-\top} \mubf}{\|\bA(t) \mubf\|}\cdot(\bI-\nubf^t\circ\Tt\otimes\nubf^t\circ\Tt)\nubf^t = 0,\\
	&\mubf^t \cdot\taubf^t=\frac{\pTt^{-\top} \mubf}{\|\bA(t) \mubf\|}\cdot(\bI-\nubf^t\otimes\nubf^t)\taubf^t = \frac{1}{\|\bA(t)\mubf\|\|\pTt\taubf\|}\mubf\cdot\taubf = 0.
	\end{align*}
	Therefore $\mubf^t$ lies in $\mathrm{span}\{\nubf^t,\taubf^t\}^\bot = \mathrm{span}\{\mubf^t\}$ and thus $\det(\nubf^t,\taubf^t,\mubf^t)=\pm1$. Indeed the determinant is positive since for small $t$ we have
	\begin{align*}
	\det(\nubf^t,\taubf^t,\mubf^t) &=\det\left(\frac{\pTt^{-\top}\nubf}{\|\pTt\taubf\|},\frac{\pTt\taubf}{\|\pTt^{-\top}\nubf\|},\left(\bI-\frac{\pTt^{-\top}\nubf}{\|\pTt^{-\top}\nubf\|}\otimes \frac{\pTt^{-\top}\nubf}{\|\pTt^{-\top}\nubf\|}\right)\pTt^{-\top}\mubf\right)\\
	&=\det\left(\frac{\pTt^{-\top}\nubf}{\|\pTt\taubf\|},\frac{\pTt\taubf}{\|\pTt^{-\top}\nubf\|},\pTt^{-\top}\mubf\right)\\
	&=\det(\pTt^{-\top})\det\left(\frac{\nubf}{\|\pTt\taubf\|},\frac{\pTt^{\top}\pTt\taubf}{\|\pTt^{-\top}\nubf\|},\mubf\right)\\
	&=\frac{\det(\pTt^{-\top})}{\|\pTt\taubf\|\|\pTt^{-\top}\nubf\|}\underbrace{\mubf\times\nubf}_{=\taubf}\cdot (\pTt^{\top}\pTt\taubf) \\
	& =\frac{\det(\pTt^{-\top})}{\|\pTt\taubf\|\|\pTt^{-\top}\nubf\|}\|\pTt\taubf\|^2>0.
	\end{align*}
\end{proof}
Using Lemma~\ref{lem:mubft} the perturbation of \eqref{eq:red_lag_func_def_curvature} reads by changing variables together with \eqref{eq:surfgradtrafo_vv} and $\partial^{\Surf}(\bnu_t\circ \Tt)=(\partial^{S_t}\bnu_t)\circ \Tt \bA^{-\top}(t)$
\begin{equation}
\label{eq:pert_lag_func_def_curvature}
\begin{split}
\mathcal{L}^t (\T^t_h,&\kappa_t,\sigma_t)  = \sum_{T\in \mathcal{T}_h}\bigg(\int_T w_t\,2\kappa_b\left(\frac{1}{2}\kappa-H_0\right)^2  + w_t\,(\kappa + \tr{\partial^{\Surf}\bnu^t\bA^\top(t)}\sigma\,ds \\
&  + \int_{\partial T}w^E_t\left(\frac{\pi}{2}-\sphericalangle(\mubf^t,\bP_{\taubf^{t}}^\perp(\Av{\nubf}))\right)\sigma\,d\gamma\bigg)+ c_A\,J_{\mathrm{surf}}(\T^t_h) + c_V\,J_{\mathrm{vol}}(\T^t_h).
\end{split}
\end{equation}

\section{Shape derivatives}
\label{sec:shape_der}
In this section we derive all shape derivatives \cite{DZ11,SZ92} involved for the perturbed Lagrangian \eqref{eq:pert_lag_func_def_curvature} using the notation from the previous section. For completeness we present all shape derivatives, but concentrate on the more involved shape operator and corresponding distributional jump term. Further, the state and adjoint state problems used in the solving algorithm in Section~\ref{sec:solving_algo} are presented. As before we assume that $\Surf\subset \VR^3$ is a smooth embedded submanifold of dimension two. We stress that the two-dimensional case with a one-dimensional sub-manifold directly follows.

\subsection{Shape derivative of normal/tangential vectors and constraints}
\label{subsec:shape_der_vectors_constr}
We start with the following well-known, but crucial, shape-derivatives of the geometric quantities.
\begin{lemma}\label{lem:wt_nut}
	Let $w_t$, $w_t^E$, $\taubf^{t}$, $\nubf^t$, and $\mubf^t$ defined as in \eqref{eq:wt_wte}, \eqref{eq:tauSt_nuSt}, and \eqref{eq:mu_St}. Then  for every $T\in \T_h$ and $E\in \E_h$:
	\begin{subequations}
		\begin{align}
		\frac{d}{dt}  \nubf^t |_{t=0} & = -\partial^{\Surf} \bX^\top\nubf   && \text{ in } [C(T)]^3,\label{eq:shape_der_nv_surface}\\
		\frac{d}{dt} \taubf^t |_{t=0} & = (\bI-\taubf\otimes\taubf)\partial^{\Surf} \bX\taubf &&  \text{ in } [C(E)]^3,\\
		\frac{d}{dt}\mubf^t |_{t=0} & = ((\bI-\taubf\otimes\taubf)\partial^{\Surf}\bX-\partial^{\Surf}\bX^\top)\mubf && \text{ in } [C(E)]^3,\label{eq:shape_der_mu_surface}\\
		\frac{d}{dt} w_t|_{t=0}                  & = \Div[S]\VX && \text{ in } C(T)\label{eq:shape_der_area},\\
		\frac{d}{dt}w_t^E |_{t=0} & = \partial^{\Surf} \bX_{\taubf\taubf}:= \partial^{\Surf} \bX \taubf\cdot \taubf && \text{ in } C(E),
		\end{align}
	\end{subequations}
	where convergence in $C(T)$ has to be understood with respect to the norm $\|f\|_{C(\T_h)}:= \max_{\bm{x}\in \T_h}\|f(\bm{x})\|$, and analogously for $C(E)$.
\end{lemma}
\begin{proof}
	Recalling $\frac{d}{dt} \partial \Tt^{-\top}|_{t=0} = - \partial \VX^\top$ and the formula \eqref{eq:tauSt_nuSt} for $\nubf^t$, we compute using the product rule: 
	\begin{align*}
	\frac{d}{dt}  (\nubf^t) |_{t=0} &= \frac{d}{dt}  \frac{\pTt^{-\top}\nubf}{\|\pTt^{-\top}\nubf\|}|_{t=0} \\
	& = -\partial \bX^{\top}\nubf + (\partial \bX\nubf\cdot\nubf)\nubf  = -(\bI-\nubf\otimes\nubf)\partial \bX^\top\nubf = -\partial^{\Surf} \bX^\top\nubf.
	\end{align*}
	The other identities follow analogously together with $\frac{d}{dt}\det(\pTt)|_{t=0}=\Div[S]{\VX}$.
\end{proof}
As an immediate consequence we obtain for the constraints:
\begin{lemma}
	\label{lem:dJ_surf_vol}
	The shape derivatives of the surface and volume constraint  in direction $\VX\in [\CC^1(\VR^d)]^d$  are given by
	\begin{align}
	DJ_{\mathrm{surf}}(\T_h)(\VX) & = 2(|\T_h|-A_0)\int_{\T_h}\Div[S]\VX\;ds, \label{eq:dJ_surf}\\
	DJ_{\mathrm{vol}}(\T_h)(\VX) & = 2(|\Omega_h|-V_0)\int_{\T_h} \VX\cdot \nubf\;ds.
	\end{align}
\end{lemma}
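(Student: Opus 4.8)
The plan is to observe that $J_{\mathrm{surf}}(\T_h^t)=(|\T_h^t|-A_0)^2$ and $J_{\mathrm{vol}}(\Omega_h^t)=(|\Omega_h^t|-V_0)^2$ are compositions of the smooth scalar map $s\mapsto(s-c)^2$ with the area functional $t\mapsto|\T_h^t|$ and the volume functional $t\mapsto|\Omega_h^t|$, respectively. By the one-dimensional chain rule the two shape derivatives therefore reduce to
\[
DJ_{\mathrm{surf}}(\T_h)(\VX)=2(|\T_h|-A_0)\,\tfrac{d}{dt}|\T_h^t|\big|_{t=0},\qquad DJ_{\mathrm{vol}}(\T_h)(\VX)=2(|\Omega_h|-V_0)\,\tfrac{d}{dt}|\Omega_h^t|\big|_{t=0},
\]
so it remains to identify the two derivatives on the right.

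For the area I would pull every perturbed element $\Tt(T)$ back to the fixed $T$ via $\Tt$, writing $|\T_h^t|=\sum_{T\in\T_h}|\Tt(T)|=\sum_{T\in\T_h}\int_T w_t\,ds$ with $w_t=\det(\pTt)\|\pTt^{-\top}\nubf\|$ as in \eqref{eq:wt_wte}; since the area is simply a sum of element areas, no skeleton contributions enter. As $\VX$ is Lipschitz (in fact piecewise polynomial) and each $\overline T$ is compact, the difference quotients $t^{-1}(w_t-w_0)$ converge uniformly on $T$, so one may differentiate under the integral sign and invoke $\tfrac{d}{dt}w_t|_{t=0}=\Div[S]\VX$ from Lemma~\ref{lem:wt_nut}. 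This yields $\tfrac{d}{dt}|\T_h^t|\big|_{t=0}=\int_{\T_h}\Div[S]\VX\,ds$ and hence \eqref{eq:dJ_surf}.

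For the volume I would instead change variables over the fixed Lipschitz domain $\Omega_h$, writing $|\Omega_h^t|=\int_{\Omega_h}\det(\pTt)\,dx$, and use the classical identity $\tfrac{d}{dt}\det(\pTt)|_{t=0}=\Div{\VX}$ together with differentiation under the integral; the Gauss divergence theorem on $\Omega_h$ then gives $\tfrac{d}{dt}|\Omega_h^t|\big|_{t=0}=\int_{\Omega_h}\Div{\VX}\,dx=\int_{\partial\Omega_h}\VX\cdot\nubf\,ds=\int_{\T_h}\VX\cdot\nubf\,ds$, which is the stated formula for $DJ_{\mathrm{vol}}$. There is no genuinely hard step here: the only points needing a little care are the uniform convergence of the difference quotients, so that $\tfrac{d}{dt}$ may be moved inside the element integrals (immediate from the regularity of $\Tt$ and compactness of $\overline T$), and the fact that the piecewise-smooth surface forces the area computation to be carried out element by element — both are routine once Lemma~\ref{lem:wt_nut} is in hand.
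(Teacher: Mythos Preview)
Your argument is correct and follows exactly the standard route the paper has in mind; indeed, the paper omits the proof entirely with the remark that these shape derivatives are standard. Your use of the chain rule together with Lemma~\ref{lem:wt_nut} for the area and the divergence theorem for the volume is precisely the expected derivation.
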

\begin{proof}
	The shape derivatives of $J_{\mathrm{surf}}$ follows directly from \eqref{eq:shape_der_area} and $J_{\mathrm{vol}}$ from $D\int_{\Omega_h}1\,dx(\VX)=\int_{\Omega_h}\Div{\VX}\,dx$ together with Gauss theorem, where $\Omega_h$ denotes the volume enclosed by $\T_h$.
\end{proof}

\subsection{Shape derivative of shape operator}
\label{subsec:shape_der_shape_op}

\begin{lemma}
	\label{lem:shape_der_jacobian}
	There holds for $\Vu\in [C^1(\VR^3)]^3$ and $\Vu_t:= \Vu\circ \Tt^{-1}$
	\begin{align}
	\frac{d}{dt}(\partial^{S_t} \Vu_t)\circ\Tt|_{t=0} = \partial^{\Surf} \bu\left(2\,\mathrm{Sym}(\nubf\otimes\nubf\partial^{\Surf}\bX)-\partial^{\Surf}\bX\right).
	\end{align}
\end{lemma}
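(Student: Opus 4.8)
The plan is to express $\partial^{S_t}\Vu_t$ pulled back to $\Surf$ via the already-established transformation rule \eqref{eq:surfgradtrafo_vv}, namely $(\partial^{S_t}\Vu_t)\circ\Tt = \partial^{\Surf}(\Vu_t\circ\Tt)\,\bA^{-\top}(t)$, and then differentiate in $t$ at $t=0$. Here $\bA(t)$ is the matrix from \eqref{eq:gradtrafo}. The crucial simplification is that $\Vu_t = \Vu\circ\Tt^{-1}$, so $\Vu_t\circ\Tt = \Vu$ is \emph{independent of} $t$; hence $\partial^{\Surf}(\Vu_t\circ\Tt) = \partial^{\Surf}\bu$ is constant in $t$ and the only $t$-dependence sits in $\bA^{-\top}(t)$. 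Thus
\[
    \frac{d}{dt}(\partial^{S_t}\Vu_t)\circ\Tt\Big|_{t=0} = \partial^{\Surf}\bu \,\frac{d}{dt}\bA^{-\top}(t)\Big|_{t=0}.
\]

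Next I would compute $\frac{d}{dt}\bA^{-\top}(t)|_{t=0}$. Since $\bA^\top(0) = \bI-\nubf\otimes\nubf = \bP_\Surf$ is a projection (not invertible on $\VR^3$), one must be slightly careful: the identity \eqref{eq:surfgradtrafo_vv} is understood as acting on tangential objects, and $\bA^{-\top}$ should be read as the relevant (pseudo-)inverse, or equivalently the whole product $\partial^{\Surf}\bu\,\bA^{-\top}(t)$ is interpreted so that the formula is consistent. The cleaner route is to avoid inverting altogether: differentiate the relation $(\partial^{S_t}\Vu_t\circ\Tt)\,\bA^\top(t) = \partial^{\Surf}\bu$ using the product rule, giving $\big(\frac{d}{dt}(\partial^{S_t}\Vu_t)\circ\Tt|_{t=0}\big)\bP_\Surf + \partial^{\Surf}\bu\,(\bA^\top)'(0) = 0$. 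Since $\partial^{S_t}\Vu_t$ is a tangential Jacobian, $(\partial^{S_t}\Vu_t)\circ\Tt$ already has range in the tangent space and is unchanged by right-multiplication by $\bP_\Surf$ up to the action on tangential test directions; combined with \eqref{eq:shape_der_A}, which gives $(\bA^\top)'(0) = -\partial^{\Surf}\bX + 2\,\mathrm{Sym}(\nubf\otimes\nubf\,\partial^{\Surf}\bX)$, one obtains
\[
    \frac{d}{dt}(\partial^{S_t}\Vu_t)\circ\Tt\Big|_{t=0} = \partial^{\Surf}\bu\big(\partial^{\Surf}\bX - 2\,\mathrm{Sym}(\nubf\otimes\nubf\,\partial^{\Surf}\bX)\big) \cdot(-1)?
\]
— at this point one must track the sign carefully, which brings me to the main obstacle.

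The main obstacle, and the place requiring the most care, is the precise bookkeeping of the projection $\bP_\Surf$ and the sign. One should verify that $(\partial^{S_t}\Vu_t\circ\Tt)\,\bP_\Surf = \partial^{S_t}\Vu_t\circ\Tt$ — i.e.\ that the tangential Jacobian is genuinely insensitive to the right projection at $t=0$ — and that differentiating a product where one factor is a non-invertible projection does not hide a contribution orthogonal to the tangent space. The safe way is to use the explicit formula \eqref{eq:gradtrafo} for $\bA^\top(t)$, which is built from $\pTt^{-1}$ and a projection onto $(\nubf^t)^\perp$, write $(\partial^{S_t}\Vu_t)\circ\Tt = \partial^{\Surf}\bu\,\pTt^{\top}\big(\bI - \widehat{\nubf^t}\otimes\widehat{\nubf^t}\big)$ for an appropriate normalized vector (reading off the inverse of \eqref{eq:gradtrafo}), and differentiate this closed expression directly using $\frac{d}{dt}\pTt^{\top}|_{t=0} = \partial\bX^\top$ and \eqref{eq:shape_der_nv_surface}. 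Collecting terms, the $\partial\bX^\top$ contribution gets projected down to $\partial^{\Surf}\bX^\top$, its transpose-structured piece recombines with the derivative of the normal projection, and after using $\partial^{\Surf}\bu\,(\bI-\nubf\otimes\nubf) = \partial^{\Surf}\bu$ one should land exactly on $\partial^{\Surf}\bu\big(2\,\mathrm{Sym}(\nubf\otimes\nubf\,\partial^{\Surf}\bX) - \partial^{\Surf}\bX\big)$, matching $-(\bA^\top)'(0)$ composed on the right with $\partial^{\Surf}\bu$, as expected. The rest is routine algebra once the projection/sign issue is pinned down.
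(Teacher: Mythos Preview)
Your overall strategy is the paper's: pull back via the transformation rule, use that $\Vu_t\circ\Tt=\Vu$ is $t$-independent, and differentiate the remaining matrix factor using \eqref{eq:shape_der_A}. However, you have the transformation rule backwards. Equation \eqref{eq:surfgradtrafo_vv} with $\Phi=\Tt$, $\hat\Surf=\Surf$, target $S_t$, gives directly
\[
(\partial^{S_t}\Vu_t)\circ\Tt \;=\; \partial^{\Surf}(\Vu_t\circ\Tt)\,\bA^\top(t) \;=\; \partial^{\Surf}\bu\,\bA^\top(t),
\]
with $\bA^\top(t)$, not $\bA^{-\top}(t)$. (The occurrence of $\bA^{-\top}$ you may have spotted just above \eqref{eq:pert_lag_func_def_curvature} is the \emph{inverse} relation, used there in the other direction.) Once you write the correct identity there is nothing to invert, no pseudo-inverse or projection subtlety, and no sign ambiguity: differentiating at $t=0$ yields immediately
\[
\frac{d}{dt}(\partial^{S_t}\Vu_t)\circ\Tt\Big|_{t=0} \;=\; \partial^{\Surf}\bu\,(\bA^\top)'(0)
\;=\; \partial^{\Surf}\bu\left(2\,\mathrm{Sym}(\nubf\otimes\nubf\partial^{\Surf}\bX)-\partial^{\Surf}\bX\right),
\]
which is exactly the claim. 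All of your subsequent maneuvers --- differentiating the product with the non-invertible $\bP_\Surf$, guessing an explicit pseudo-inverse $\pTt^\top(\bI-\widehat{\nubf^t}\otimes\widehat{\nubf^t})$, and the dangling sign --- stem from this one misreading and are unnecessary.
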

\begin{proof}
	We have  for $t\ge 0$ by the chain rule
	\begin{equation}\label{eq:trafo_gradient}
	(\partial^{S_t} \Vu_t) \circ \Tt = \partial^{\Surf} \Vu\,\VA(t)^\top
	\end{equation}
	with $\VA(t)$ as in \eqref{eq:surfgradtrafo_vv}. Moreover, with \eqref{eq:shape_der_A} it follows by differentiating \eqref{eq:trafo_gradient} that
	\begin{align*}
	\frac{d}{dt}(\partial^{S_t} \bu_t)\circ\Tt |_{t=0} &= \partial^{\Surf} \bu\, (\bA^\top)'(0) = \partial^{\Surf} \bu\left(2\,\mathrm{Sym}(\nubf\otimes \nubf\partial^{\Surf} \bX)-\partial^{\Surf} \bX\right).
	\end{align*}
\end{proof}

\begin{lemma}
	\label{lem:shape_der_wg}
	For the shape operator $\partial^{\Surf}\bnu$ and its trace there hold
	\begin{align}
	\frac{d}{dt}(\partial^{S_t}\bnu_t)\circ \Tt|_{t=0}  &= \partial^{\Surf}\bnu\left(2\,\mathrm{Sym}(\nubf\otimes\nubf\partial^{\Surf}\bX)-\partial^{\Surf}\bX\right)- \mathrm{hess}(\VX)(\nubf) - \partial^{\Surf}\bX^\top\partial^{\Surf}\bnu,\\
	\begin{split}\frac{d}{dt}\tr{\partial^{S_t}\bnu_t}\circ \Tt|_{t=0}  &= - \Delta^{\Surf}\VX\cdot\nubf - 2\partial^{\Surf}\bX : \partial^{\Surf}\bnu,
	\end{split}
	\end{align}
	where $\mathrm{hess}(\VX)$ denotes the Riemannian Hessian on $\Surf$ of $\VX$ and $\Delta^{\Surf}\VX:=\Div[S]{\partial^{\Surf}\VX}$ the \emph{Laplace-Beltrami} operator.
	
	Further, with a continuous and piece-wise smooth function $\sigma$ there holds for the weak form
	\begin{align}
	\label{eq:shape_der_mean_weak}
	\int_T -(\Delta^{\Surf}\VX\cdot\nubf +2\partial^{\Surf}\bX : \partial^{\Surf}\bnu)\sigma\,ds = \int_T\partial^{\Surf}\VX\partial^{\Surf}\sigma\cdot\nubf -\partial^{\Surf}\bX : \partial^{\Surf}\bnu\sigma\,ds - \int_{\partial T}\partial^{\Surf}\VX\mubf\cdot \nubf\,\sigma\,d\gamma.
	\end{align}
\end{lemma}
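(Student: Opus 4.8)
The plan is to prove the three formulas in turn: the first by differentiating the pull-back of the shape operator, the second by tracing it, and the third by a surface integration by parts on each element $T$.

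\emph{Shape derivative of the shape operator.} Exactly as in the proof of Lemma~\ref{lem:shape_der_jacobian}, the transformation rule \eqref{eq:surfgradtrafo_vv} (applied with $\Phi=\Tt$, $\bm f=\bnu_t$, and $\bA=\bA(t)$ as in \eqref{eq:gradtrafo}) gives $(\partial^{S_t}\bnu_t)\circ\Tt=\partial^{\Surf}\bnu^t\,\bA^\top(t)$, where $\bnu^t:=\bnu_t\circ\Tt$. I would differentiate this product at $t=0$. The factor $\bA^\top(t)$ contributes $\partial^{\Surf}\nubf\,(\bA^\top)'(0)$, which by \eqref{eq:shape_der_A} (and $\bnu^0=\nubf$) equals $\partial^{\Surf}\nubf\bigl(2\,\mathrm{Sym}(\nubf\otimes\nubf\partial^{\Surf}\bX)-\partial^{\Surf}\bX\bigr)$, the same expression as in Lemma~\ref{lem:shape_der_jacobian}. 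The factor $\partial^{\Surf}\bnu^t$ contributes $\bigl(\tfrac{d}{dt}\partial^{\Surf}\bnu^t|_{t=0}\bigr)\bA^\top(0)$; since within each element all data depend jointly smoothly on $(x,t)$, $\partial^{\Surf}$ commutes with $\tfrac{d}{dt}$, so by \eqref{eq:shape_der_nv_surface} and $\bA^\top(0)=\bP_{\Surf}$ this equals $\partial^{\Surf}(-\partial^{\Surf}\bX^\top\nubf)\,\bP_{\Surf}=\partial^{\Surf}(-\partial^{\Surf}\bX^\top\nubf)$ (the right projector is absorbed because $\partial^{\Surf}(\cdot)\bP_{\Surf}=\partial^{\Surf}(\cdot)$). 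Expanding the surface gradient of the vector field $\partial^{\Surf}\bX^\top\nubf$ by the product rule splits it into a second-order part which, by the definition of the surface Hessian, is $\mathrm{hess}(\VX)(\nubf)$, plus $\partial^{\Surf}\bX^\top\partial^{\Surf}\bnu$. Adding the two contributions gives the first identity.

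\emph{Trace.} As $\mathrm{tr}$ is linear and commutes with composition by $\Tt$, the second identity follows by tracing the first, treating its four summands separately. The $2\,\mathrm{Sym}$-term vanishes: writing $2\,\mathrm{Sym}(\nubf\otimes\nubf\partial^{\Surf}\bX)=\nubf\otimes\nubf\partial^{\Surf}\bX+\partial^{\Surf}\bX^\top(\nubf\otimes\nubf)$, each trace against $\partial^{\Surf}\nubf$ is zero because $\partial^{\Surf}\nubf\,\nubf=0$ (from $\bP_{\Surf}\nubf=0$) and $\nubf^\top\partial^{\Surf}\nubf=0$ (from $|\nubf|^2\equiv1$). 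Next, $\mathrm{tr}(\partial^{\Surf}\nubf\,\partial^{\Surf}\bX)=\partial^{\Surf}\bX:\partial^{\Surf}\nubf=\mathrm{tr}(\partial^{\Surf}\bX^\top\partial^{\Surf}\nubf)$, using the symmetry of the shape operator $\partial^{\Surf}\nubf$ (for instance via the signed-distance extension, $\partial^{\Surf}\nubf=D^2 d|_{\Surf}$). Finally $\mathrm{tr}\bigl(\mathrm{hess}(\VX)(\nubf)\bigr)=\Delta^{\Surf}\VX\cdot\nubf$ straight from $\Delta^{\Surf}\VX=\Div[S]{\partial^{\Surf}\VX}$ (component-wise this is $\sum_j\nubf_j$ times the trace of the scalar surface Hessian of $\bX_j$, and the ordering of the two tangential derivatives inside $\mathrm{hess}$ is immaterial for the trace). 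Collecting, one obtains $-\Delta^{\Surf}\VX\cdot\nubf-2\,\partial^{\Surf}\bX:\partial^{\Surf}\nubf$.

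\emph{Weak form.} The key tool is the product rule $\Div[S]{M^\top\bm w}=\Div[S]{M}\cdot\bm w+M:\partial^{\Surf}\bm w$ for a matrix field $M$ and a vector field $\bm w$, applied with $M=\partial^{\Surf}\bX$ and $\bm w=\sigma\nubf$. Since $\partial^{\Surf}(\sigma\nubf)=\nubf\otimes\nabla^{\Surf}\sigma+\sigma\,\partial^{\Surf}\nubf$ and $\partial^{\Surf}\bX:(\nubf\otimes\nabla^{\Surf}\sigma)=(\partial^{\Surf}\bX\,\nabla^{\Surf}\sigma)\cdot\nubf$, this yields pointwise on $T$
\[
(\Delta^{\Surf}\bX\cdot\nubf)\,\sigma=\Div[S]{(\partial^{\Surf}\bX)^\top\sigma\nubf}-(\partial^{\Surf}\bX\,\nabla^{\Surf}\sigma)\cdot\nubf-\sigma\,\partial^{\Surf}\bX:\partial^{\Surf}\nubf .
\]
Integrating over $T$ and invoking the surface divergence theorem, $\int_T\Div[S]{\bm v}\,ds=\int_{\partial T}\bm v\cdot\mubf\,d\gamma+\int_T(\bm v\cdot\nubf)\,\Div[S]{\nubf}\,ds$, with $\bm v=(\partial^{\Surf}\bX)^\top\sigma\nubf$: the curvature term drops because $\bm v\cdot\nubf=\sigma\,(\partial^{\Surf}\bX\,\nubf)\cdot\nubf=0$ (again $\partial^{\Surf}\bX\,\nubf=\partial\tilde\bX\,\bP_{\Surf}\nubf=0$), while $\bm v\cdot\mubf=\sigma\,(\partial^{\Surf}\bX\,\mubf)\cdot\nubf$. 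Substituting the resulting expression for $\int_T(\Delta^{\Surf}\bX\cdot\nubf)\sigma\,ds$ into $-\int_T(\Delta^{\Surf}\VX\cdot\nubf+2\,\partial^{\Surf}\bX:\partial^{\Surf}\nubf)\sigma\,ds$ and cancelling one copy of $\int_T\partial^{\Surf}\bX:\partial^{\Surf}\nubf\,\sigma\,ds$ produces the stated right-hand side (with the convention $\partial^{\Surf}\sigma=\nabla^{\Surf}\sigma$).

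\emph{Main obstacle.} The only genuinely delicate step is the first identity: one has to keep the transpose placements straight, fix the convention for $\mathrm{hess}(\VX)$ so that the product-rule split is literal, and use the symmetry of $\partial^{\Surf}\nubf$. This is harmless for the remaining two parts, where only the trace, respectively the Laplace--Beltrami operator, of the second-order term enters and is convention-free; and the sole subtlety in the integration by parts — a possibly surviving curvature term — is removed by $\bP_{\Surf}\nubf=0$.
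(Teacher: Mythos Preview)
Your proof is correct and follows essentially the same approach as the paper: product rule on $(\partial^{S_t}\bnu_t)\circ\Tt=\partial^{\Surf}\bnu^t\,\bA^\top(t)$ together with \eqref{eq:shape_der_A} and \eqref{eq:shape_der_nv_surface} for the first identity, tracing and using $\partial^{\Surf}\bnu^\top\nubf=0$ (plus symmetry of the shape operator) for the second, and integration by parts for the third. You are in fact more explicit than the paper on the weak form, where the paper simply writes ``follows directly with integration by parts''; your handling of the curvature term in the surface divergence theorem via $\partial^{\Surf}\bX\,\nubf=0$ is exactly the point that makes this step go through.
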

\begin{proof}
	With the product rule and Lemma \ref{lem:shape_der_jacobian} there holds
	\begin{align*}
	\frac{d}{dt}(\partial^{S_t}\bnu_t)\circ \Tt|_{t=0} = \partial^{\Surf}\left(\frac{d}{dt}(\bnu_t)\circ \Tt|_{t=0}\right) + \partial^{\Surf} \bnu\left(2\,\mathrm{Sym}(\nubf\otimes \nubf\partial^{\Surf} \bX)-\partial^{\Surf} \bX\right)
	\end{align*}
	and further with \eqref{eq:shape_der_nv_surface}
	\begin{align*}
	\partial^{\Surf}\left(\frac{d}{dt}(\bnu_t)\circ \Tt|_{t=0}\right) = -\partial^{\Surf}\left(\partial^{\Surf} \bX^\top\bnu\right) = -\partial^{\Surf} \bX^\top\partial^{\Surf}\bnu - \text{hess}(\VX)(\nubf).
	\end{align*}
	For the trace of the shape operator we compute 
	\begin{align*}
	&\tr{\partial^{\Surf}\bnu\left(2\,\mathrm{Sym}(\nubf\otimes\nubf\partial^{\Surf}\bX)-\partial^{\Surf}\bX\right)- \mathrm{hess}(\VX)(\nubf) - \partial^{\Surf}\bX^\top\partial^{\Surf}\bnu} = \\
	& 2 \partial^{\Surf}\bnu : (\mathrm{Sym}(\nubf\otimes\nubf\partial^{\Surf}\bX)) - \partial^{\Surf}\bnu:\partial^{\Surf}\bX - \tr{\mathrm{hess}(\VX)(\nubf)} - \partial^{\Surf}\bX : \partial^{\Surf}\bnu = \\
	&2 \underbrace{\partial^{\Surf}\bnu : (\nubf\otimes\nubf\partial^{\Surf}\bX)}_{=0} - \tr{\mathrm{hess}(\VX)(\nubf)} - 2\partial^{\Surf}\bX : \partial^{\Surf}\bnu = - \Delta^{\Surf}\VX\cdot\nubf - 2\partial^{\Surf}\bX : \partial^{\Surf}\bnu,
	\end{align*}
	where we used that $\partial^{\Surf}\bnu^\top\bnu=0$. The weak form \eqref{eq:shape_der_mean_weak} follows directly with integration by parts.
\end{proof}
We will also need the shape derivative of the distributional part of the curvature.
\begin{lemma}
	\label{lem:shape_der_wg_jump}
	Let $T\in\T_h$ and $\hat{T}\in \T_h$ sharing a common edge $E$ on which $\Av{\bnu}$ denotes the averaged normal vector \eqref{eq:av_normal_vector}. There holds on $\partial T$ with its co-normal vector $\mubf$ on $E$
	\begin{align}
	\frac{d}{dt}\sphericalangle(\mubf^{t},\bP_{\taubf^{t}}^{\perp}(\Av{\nubf}))|_{t=0}= -\frac{(\partial^{\Surf}\bX-\partial^{\Surf}\bX^\top)\mubf\cdot \Av{\nubf}}{\sqrt{1-(\mubf\cdot\Av{\nubf})^2}}.
	\end{align}
\end{lemma}
\begin{proof}
	See Appendix \ref{app:comp_shape_der}.
\end{proof}

We emphasize that the same result holds if we would neglect the projection $\bP_{\taubf^{t}}^{\perp}(\cdot)$ and solely consider the term $\sphericalangle(\mubf^{t},\Av{\nubf})$. However, for the second shape derivatives, which is important when considering a shape Newton algorithm, the results would differ.

\subsection{Shape derivative, state, and adjoint state problem}
\label{subsec:shape_der_state_adjoint}

The parameterized Lagrangian is defined by
\begin{equation}
G (t,\kappa,\sigma) :=  \mathcal{L}^t(\T^t_h,\kappa\circ \Tt^{-1},\sigma\circ \Tt^{-1}),
\end{equation}
where $\mathcal{L}^t$ is given by \eqref{eq:pert_lag_func_def_curvature}.
Therefore the shape derivative can be computed by  (see \cite{a_LAST_2016a})
\begin{align}
D\mathcal{J}(\T_h)(\VX) = \partial_tG(0,\kappa,\sigma),
\end{align}
where $(\kappa,\sigma)$ solve
\begin{equation}
\label{eq:state_adjoint eq}
\begin{split}
\text{ find } \kappa, \text{ such that } \;  \partial_{\sigma} G(0,\kappa,\sigma)(\delta\sigma) =0 \quad \text{ for all } \delta\sigma\in V_h^k(\T_h), \\
\text{ find } \sigma, \text{ such that } \; \partial_{\kappa} G(0,\kappa,\sigma)(\delta\kappa) =0 \quad \text{ for all } \delta\kappa\in  V_h^k(\T_h), 
\end{split}
\end{equation}
with 
\begin{align}
&\partial_{\kappa} G(0,\kappa,\sigma)(\delta\kappa) = \int_{\T_h}2\kappa_b\left(\frac{1}{2}\kappa-H_0\right)\delta\kappa+\delta\kappa\,\sigma\,ds,\\
&\partial_{\sigma} G(0,\kappa,\sigma)(\delta\sigma) = \sum_{T\in \mathcal{T}_h}\Big(\int_{T}\kappa\,\delta\sigma+\tr{\partial^{\Surf}\bnu}\delta\sigma\,ds+\int_{\partial T}\left(\frac{\pi}{2}-\sphericalangle(\mubf,\bP_{\taubf}^\perp(\Av{\nubf}))\right)\delta\sigma\,d\gamma\Big).
\end{align}
Adding up all terms together with Lemma~\ref{lem:dJ_surf_vol} the shape derivative of Lagrangian \eqref{eq:red_lag_func_def_curvature} reads
\begin{align}
D\mathcal{J}(\T_h)(\VX) &= \sum_{T\in \T_h}\Big(\int_{T} \Div[S]{\VX}2\kappa_b(\frac{1}{2}\kappa-H_0)^2 + \Div[S]{\VX}\sigma\,\kappa +\Div[S]{ \VX} \tr{\partial^{\Surf}\bnu}\sigma\nonumber\\
&+\partial^{\Surf}\VX\partial^{\Surf}\sigma\cdot\nubf -\partial^{\Surf}\bX : \partial^{\Surf}\bnu\sigma\,ds - \int_{\partial T}\partial^{\Surf}\VX\mubf\cdot \nubf\,\sigma\,d\gamma\nonumber\\
&+2c_A(|\T_h|-A_0)\int_{T}\Div[S]{\VX}\,ds+2c_V(|\Omega_h|-V_0)\int_{T} \VX\cdot \nubf \,ds \nonumber\\
&+ \int_{\partial T}\big(\partial^{\Surf} \bX_{\taubf\taubf}\left(\frac{\pi}{2}-\sphericalangle(\mubf,\Av{\nubf})\right)+\frac{(\partial^{\Surf}\bX-\partial^{\Surf}\bX^\top)\mubf\cdot \Av{\nubf}}{\sqrt{1-(\mubf\cdot\Av{\nubf})^2}}\big)\sigma\,d\gamma\Big).\label{eq:lag_func_curv_shape_der}
\end{align}
In the lowest order case, $\T_{h,1}$, there holds $\VX \in [V_h^1(\T_h)]^3$ and $\bnu|_T=\text{const}$, and therefore \eqref{eq:lag_func_curv_shape_der} simplifies with \eqref{eq:shape_der_mean_weak} to
\begin{align}
D\mathcal{J}(\T_h)(\VX) &= \sum_{T\in \T_h}\Big(\int_{T} \Div[S]{\VX}2\kappa_b\left(\frac{1}{2}\kappa-H_0\right)^2 + \Div[S]{\VX}\sigma\,\kappa\,ds\nonumber\\
& +2c_A(|\T_h|-A_0)\int_{T}\Div[S]{\VX}\,ds+2c_V(|\Omega_h|-V_0)\int_{T} \VX\cdot \nubf \,ds \nonumber\\
&+ \int_{\partial T}\big(\partial^{\Surf} \bX_{\taubf\taubf}\left(\frac{\pi}{2}-\sphericalangle(\mubf,\Av{\nubf})\right)+\frac{(\partial^{\Surf}\bX-\partial^{\Surf}\bX^\top)\mubf\cdot \Av{\nubf}}{\sqrt{1-(\mubf\cdot\Av{\nubf})^2}}\big)\sigma\,d\gamma\Big).\label{eq:lag_func_curv_shape_der_lo}
\end{align}
We observe that in this case the lifting of the distributional shape operator $-\partial^{\Surf}\bnu$ is done only by the boundary jump terms.
For a one-dimensional sub-manifold in 2D the jump term in \eqref{eq:lag_func_curv_shape_der} simplifies as no deformation determinant is involved
\begin{align}
D\mathcal{J}^{2D}(\T_h)(\VX) &= \sum_{T\in \T_h}\Big(\int_{T} \Div[S]{\VX}2\kappa_b(\kappa-H_0)^2 + \Div[S]{\VX}\sigma\,\kappa +\Div[S]{ \VX} \tr{\partial^{\Surf}\bnu}\sigma\nonumber\\
&+\partial^{\Surf}\VX\partial^{\Surf}\sigma\cdot\nubf -\partial^{\Surf}\bX : \partial^{\Surf}\bnu\sigma\,ds - \int_{\partial T}\partial^{\Surf}\VX\mubf\cdot \nubf\,\sigma\,d\gamma\nonumber\\
&+2c_A(|\T_h|-A_0)\int_{T}\Div[S]{\VX}\,ds+2c_V(|\Omega_h|-V_0)\int_{T} \VX\cdot \nubf \,ds \nonumber\\
&+ \int_{\partial T}\frac{(\partial^{\Surf}\bX-\partial^{\Surf}\bX^\top)\mubf\cdot \Av{\nubf}}{\sqrt{1-(\mubf\cdot\Av{\nubf})^2}}\sigma\,d\gamma\Big).\label{eq:lag_func_curv_shape_der_2d}
\end{align}

\subsection{Stabilization of element-areas}
\label{subsec:stab_element_areas}
With the penalty term $c_A(|\T_h|-A_0)^2$ we control the total surface area to be close to a prescribed value $A_0$. As the solution is invariant under re-parameterization it may happen, however, that some elements shrink or increase their local area, leading to a deterioration of the shape regularity of the elements. To mitigate this possible mesh-degeneration we use a local area preservation constraint (see \cite{a_BILIKO_2020a}):
\begin{align}
\sum_{T\in\T_h}c_{A_{\mathrm{loc},T}}(|T|-|T_0|)^2,
\end{align}
where $T_0$ denotes the element area on the initial shape and $c_{A_{\mathrm{loc},T}}>0$ the penalty parameter, which can be chosen for each element $T$ individually. Further, its shape-derivative is of the same form as the global area constraint.

Other approaches to prevent ill-shaped meshes are regularity techniques as viscous regularization \cite{MK08} or remeshing, including local refinement, coarsening, and smoothing \cite{Paul08,BNP10}. Note that for free-boundary problems the question of the placement of additionally inserted nodes to obtain a consistent mesh is not straight-forward, especially for high-order curved shapes.

\section{Solving algorithm}
\label{sec:solving_algo}
\subsection{Basic algorithm}
\label{subsec:basic_algo}
Let $\T_h$ be a fixed initial surface and let $\bm{H}=[H^1(\T_h)]^3$ be equipped with the scalar product
\begin{align}
(\VV,\VW)_{\bm{H}}:= \int_{\T_h}\partial^{\Surf}\bm{V}:\partial^{\Surf}\bm{W} + \varepsilon\,\VV\cdot \VW\,ds,\qquad \varepsilon>0.\label{eq:h1_eps_scalar_product}
\end{align} 
Here, $\varepsilon>0$, which will be fixed to $\varepsilon:=1\times 10^{-10}$ throughout this work, is needed to guarantee positive definiteness, as we will consider closed surfaces without a possible boundary. We emphasize that the full gradient $\partial^{\Surf}$ leads to more regular displacement updates than considering only its symmetric part $\partial^{\Surf}\VV+\partial^{\Surf}\VV^\top$. Further, the constant $\varepsilon$ in \eqref{eq:h1_eps_scalar_product} should be chosen to be small as otherwise the mass matrix gains more weight also leading to worse mesh-quality updates. Then, the gradient $\nabla^{\bm{H}} g(\VV)$ is defined by
\begin{align}
\partial g(\VV)(\VW)=(\nabla^{\bm{H}} g(\VV),\VW)_{\bm{H}}\qquad \forall\, \VW\in \bm{H},\label{eq:shape_der_via_grad}
\end{align}
where the mapping $g$ is defined by
\begin{align}
\VV\mapsto g(\VV):= \mathcal{J}((\Id+\VV)(\T_h))
\end{align}
and there holds for the derivative of $g$ at $\VV$ in direction $\VW$ (see \cite{KSNS20,a_IGSTWE_2018a}) 
\begin{align}
\partial g(\VV)(\VW)=D\mathcal{J}((\Id+\VV)(\T_h))(\VW\circ(\Id+\VV)^{-1}).
\end{align}
The shape optimization algorithm reads as follows:
\begin{algorithm}[H] 
	\label{alg:gradient_alorithm}
	\begin{algorithmic}[1]
		\State{{\bf Input:} surface $\T_h^0$, $n=0$, $N_{\mathrm{max}}>0$, $\delta >0$, $\alpha > 0$  }
		\State {\bf Output:} optimal shape $\T_h^*$ 
		\While{ $n\le N_{\mathrm{max}}$ and $|\nabla \mathcal{J}(\T_h^n)|>\delta$}
		\If{$\mathcal{J}((\Id - \alpha \nabla \mathcal{J}(\T_h^n))(\T_h^n)) \leq \mathcal{J}(\T_h^n)$  }
		\State $\T_h^{n+1} \gets (\Id - \alpha \nabla \mathcal{J}(\T_h^n))(\T_h^n)$
		\State $n\gets n+1$ 
		\State increase $\alpha$
		\Else
		\State reduce $\alpha$
		\EndIf
		\EndWhile
		\caption{gradient algorithm}
		\label{alg:gradient}
	\end{algorithmic}
\end{algorithm}
The input quantities are the initial shape $\T_{h}^0$, the maximal number of optimization steps $N_{\mathrm{max}}>0$, a threshold $\delta>0$ for the shape gradient residuum, and the initial step-size $\alpha$. A line-search is performed by testing if the goal functional, the mean curvature together with the volume and area constraints, is decreasing. Otherwise the step-size $\alpha$ will be reduced. If the step gets accepted it is possible to increase $\alpha$ to gain a faster convergence towards the minimum. Note, however, that a raise of $\alpha$ has to be done carefully as the shape may run e.g. into singularities.

One iteration step of Algorithm~\ref{alg:gradient_alorithm} involves:
\begin{enumerate}
	\item For a fixed surface $\T_h^n$ average the corresponding normal vector $\bnu$ by \eqref{eq:av_normal_vector} and solve for $\kappa$, $\sigma$ the state and adjoint state equation \eqref{eq:state_adjoint eq}.
	\item With the new $\kappa$ and $\sigma$ calculate the gradient by computing the shape derivative \eqref{eq:lag_func_curv_shape_der}.
\end{enumerate}

To reduce the possibility to get stuck in a local minimum a non-monotone gradient method is considered, where the next gradient step needs to result in a lower cost then the maximum of the last $M=5$ energies to be accepted. Therefore the right-hand side of Line 4 in Algorithm~\ref{alg:gradient_alorithm} changes to $\leq \max_{i=0}^{M-1} \mathcal{J}(\T_h^{n-i})$.

After convergence the parameters involving the area and volume constraints can be increased and the algorithm is repeated.

For simplicity we keep with the standard (non-monotone) gradient algorithm. Other methods as l-BFGS or nonlinear conjugate gradient algorithms (NCG) to speed up the convergence and relying on the first shape derivative can directly be adapted; see, e.g.,  \cite{a_BL_2021a,a_SCSIWE_2016a,a_IGSTWE_2018a}.

\subsection{Improved surface preservation}
\label{subsec:improved_area_cons}
In the shape optimization algorithm three different parameters regulating the volume and area constraints are involved: $c_V$, $c_A$, and $c_{A_{\mathrm{loc},T}}$. It is desirable having as less parameters as possible while preserving the convergence and performance of the algorithm. In the case where the initial shape already has the appropriate area it is possible to generate deformation updates such that the area gets close to being constant.

For this purpose, instead of computing the shape gradient in $H^1$ via inner product \eqref{eq:shape_der_via_grad} we incorporate a divergence free condition by solving a Stokes problem, where an additional pressure-like unknown $p$ is introduced to enforce that the displacement increment is divergence-free, i.e., the surface area should be linearly preserved. Given a vector field $\VV$, we seek $(\VX,p)\in \bm{H}\times H^1(\T_h)$, such that
\begin{subequations}
	\label{eq:shape_der_stokes}
	\begin{alignat}{3}
	&(\bm{X},\VW)_{\bm{H}} + (p,\Div{\VW})_{L^2} &&= \partial g(\VV)(\VW),\qquad &&\text{ for all } \VW\in \bm{H},\\
	& (q,\Div{\bm{X}})_{L^2} &&= 0,\qquad&& \text{ for all } q\in H^1(\T_h).
	\end{alignat}
\end{subequations}
The function $\bm{X}$ is the shape gradient $\nabla^{\bm{H}} g(\VV)$ with respect to $\bm{H}$ with the additional condition that 
$\Div{\bm{X}}=0$ in a weak sense. 
Note, that this requires a Stokes stable finite element pairing for $\VX$ and $p$. The famous Taylor-Hood pairing for example requires that $p$ is of one polynomial order lower than $\VX$ and thus, quadratic polynomials for the deformation fields have to be used in combination with a linear approximation of the pressure. Other choices, such as the MINI element, or a pressure-projection stabilized equal order pair \cite{DB04} are possible.

Using this procedure, any moderately choice of $c_A>0$ leads to shapes with stable surface area (we observed that the error evolves with less then 1\%). Setting $c_A=0$ is invalid as in this case we loose control over the global area constraint, having only linear divergence-free updates.
As a result only the volume constraint has to be adapted after the shape optimization algorithm determinates.

The obvious disadvantage of \eqref{eq:shape_der_stokes} is the increased computational effort, since now a saddle-point problem has to be solved in each step instead of a positive define one.

\subsection{Costs}
For a better comparison with other methods we describe the numerical costs for the solving algorithm in this section.

One iteration step of Algorithm~\ref{alg:gradient_alorithm} has the costs of averaging the normal vector $\bnu$ which consists of local problems involving two adjacent elements at each edge. Then two systems are solved to compute the state and adjoint state $\kappa$ and $\bsigma$ where the same mass matrix of a scalar Lagrange unknown (which is symmetric and positive definite (spd)) with two different right-hand sides is used and thus, the matrix has to be assembled, factorized, and inverted only once. Furthermore, one might consider lumped mass matrices, where only the diagonal entries are non-zero such that the matrix is trivial to invert \cite{CJRT01}. Finally, the shape gradient step updating the deformation involves assembling and solving the (regularized) stiffness matrix of a vector-valued Lagrangian finite element and is thus also spd. If the improved surface preservation algorithm from Section~\ref{subsec:improved_area_cons} is considered instead, it becomes a Stokes-like saddle point problem involving an additional Lagrangian pressure unknown.

\subsection{Automatic shape derivatives in NGSolve}
\label{subsec:automatic_shape_der}
In Section~\ref{sec:shape_der} all shape derivatives were computed analytically such that the shape optimization Algorithm~\ref{alg:gradient_alorithm} can directly be applied if the current (deformed) meshes are accessible. One possibility consists of (manually) deforming all vertices of the mesh during each optimization step and then computing the state and adjoint state problems as well as the next shape derivative on it. This, however, is not applicable for curved elements. Instead 
we use an ALE (arbitrary Lagrangian Eulerian) technique, where a mesh (mostly the initial shape) is fixed and all computations are performed on it. Therefore, the involved deformation gradients and determinants have to be incorporated, which is straight forward but error prone and can lead to complicated or confusing expressions. In this work we use the open source finite element software NGSolve\footnote{www.ngsolve.org} \cite{Sch14}, where the method \texttt{SetDeformation} can be used to avoid the manual computation of the transformations and chain rules. In the supplementary material (Appendix~\ref{sec:suppl_material}) a full code example including a detailed description can be found.

To demonstrate how the deformation of a mesh is realized we consider the following lines of code:
\begin{lstlisting}
mesh.SetDeformation(displacement)
A.Assemble()
mesh.UnsetDeformation()
\end{lstlisting}
where $A$ corresponds to the standard stiffness bilinear form
\begin{align*}
a:H^1(\Omega)\times H^1(\Omega)\to\VR,\qquad a(u,v)=\int_{\Omega}\nabla u \cdot\nabla v\,dx,
\end{align*}
\texttt{mesh} is the intial shape $\T_h^0$, and the object \texttt{displacement} knows how the mesh has to be deformed leading to the current shape $\T_h^n$.
Then, everything between \texttt{SetDeformation} and \texttt{UnsetDeformation} gets assembled as it was on the current configuration by using the appropriate transformation rules, namely
\begin{align*}
a(u,v)=\int_{\Omega}J\,\bm{F}^\top\nabla u \cdot\bm{F}^\top\nabla v\,dx
\end{align*}
with $\bm{F}=\bm{I}+\nabla$\texttt{displacement} and $J=\det(\bm{F})$.

The computation of shape derivatives can be challenging and also error prone due complicated expressions. Although we computed and presented the shape derivatives in this work for this specific problem, it is convenient and useful if they can be calculated automatically. If, e.g., the constraints or equations are slightly changed, (parts of) the shape derivatives would have to be recomputed by hand. In the recent publication  \cite{KSNS20} the fully automated and semi-automated 
computation of shape derivatives in NGSolve was presented. For instance, to compute the shape derivative \eqref{eq:lag_func_curv_shape_der}, excluding the area and volume constraint, one can consider for fixed $\kappa$ and $\sigma$ the linear form (compare \eqref{eq:pert_lag_func_def_curvature})
\begin{align*}
F(\T_h,\kappa,\sigma):=\sum_{T\in \mathcal{T}_h}&\Big(\int_T 2\kappa_b\big(\frac{1}{2}\kappa-H_0\big)^2  + (\kappa + \tr{\partial^{\Surf}\bnu})\sigma\,ds\\
&\quad+ \int_{\partial T}\left(\frac{\pi}{2}-\sphericalangle(\mubf,\bP_{\taubf}^\perp(\Av{\nubf}))\right)\sigma\,d\gamma\Big)
\end{align*}
which can be written symbolically in Python as
\begin{lstlisting}
def F(kappa, sigma):
	return (2*kb*(1/2*kappa-H0)**2 + (kappa + Trace(Grad(nsurf)))*sigma)*ds + (pi/2-acos(nel*nav))*sigma*ds(element_boundary=True)
\end{lstlisting}
where \texttt{nav} $:=\bP_{\taubf}^\perp(\Av{\nubf})$, and then call the function \texttt{DiffShape} to obtain $DF(\T_h,\kappa,\sigma)(\VX)$. We emphasize that in the final code we neglected the nonlinear projection operator $\bP_{\taubf}^\perp(\cdot)$ saving unnecessary computations as noted below Lemma~\ref{lem:shape_der_wg_jump}.
\begin{lstlisting}
fesH = VectorH1(mesh, order=order)
X = fesH.TestFunction()

f = LinearForm(fesH)
f += F(kappa,sigma).DiffShape(X)
f.Assemble()
\end{lstlisting}
This procedure can directly be combined with the \texttt{SetDeformation} method from before to compute the shape derivative automatically on the current configuration without actually changing the mesh.

Due to the integral form of the constraints the utilized form for the shape derivative is given by e.g.,
\begin{align*}
\frac{d}{dt}c_V\left(|\Tt(\Omega_h)|-|V_0|\right)^2|_{t=0} = 2\,c_V\left(|\Omega_h|-|V_0|\right)\frac{d}{dt}\,|\Tt(\Omega_h)|\,|_{t=0}
\end{align*}
and then the \texttt{DiffShape} function can be applied to $|\Omega_h| =\int_{\T_h}\frac{1}{d}\Vx\cdot\bnu\,ds$ to compute the full shape derivative.

This tool of automatic shape derivatives can also be extremely helpful in terms of cross-checking the manually computed shape derivatives or an efficient and utilised for fast testing of changes of the equations without the necessity of recomputing all derivatives by hand.

\section{Numerical examples}
\label{sec:numerical_examples}
In this section, we demonstrate the performance of the proposed method. First, we test the mean curvature computation of our method showing the pertinence of the non-standard boundary jump term measuring the angle between two neighbored triangles. Particularly in the lowest order case when approximating the curvature with linear polynomials on an affine triangulation the inner part of \eqref{eq:weak_form_mean_curv} vanishes as the normal vector is piece-wise constant in this case (compare Figure~\ref{fig:nv_jump}). Then, we present two benchmark examples for equilibrium shapes motivated by cell membranes including non-zero spontaneous curvature.

\subsection{Prescribed configurations of sphere and biconcave-oblate}
\label{subsec:prescribed_conf}
\begin{figure}[h]
	\centering
	\includegraphics[width=0.2\textwidth]{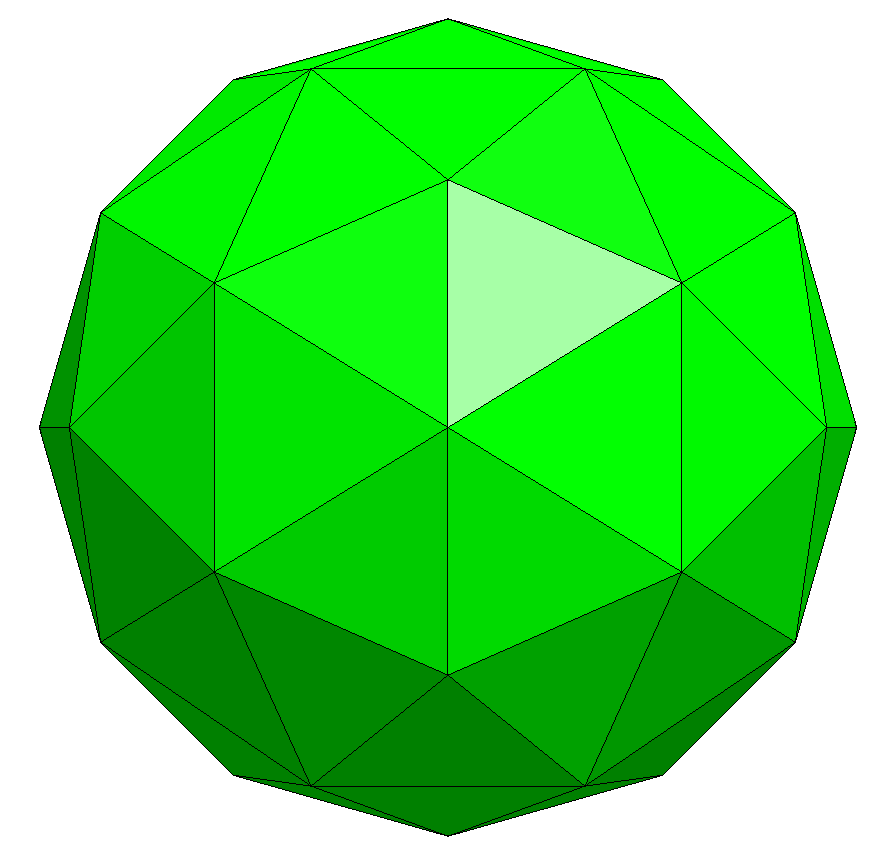}\hspace*{3cm}
	\includegraphics[width=0.28\textwidth]{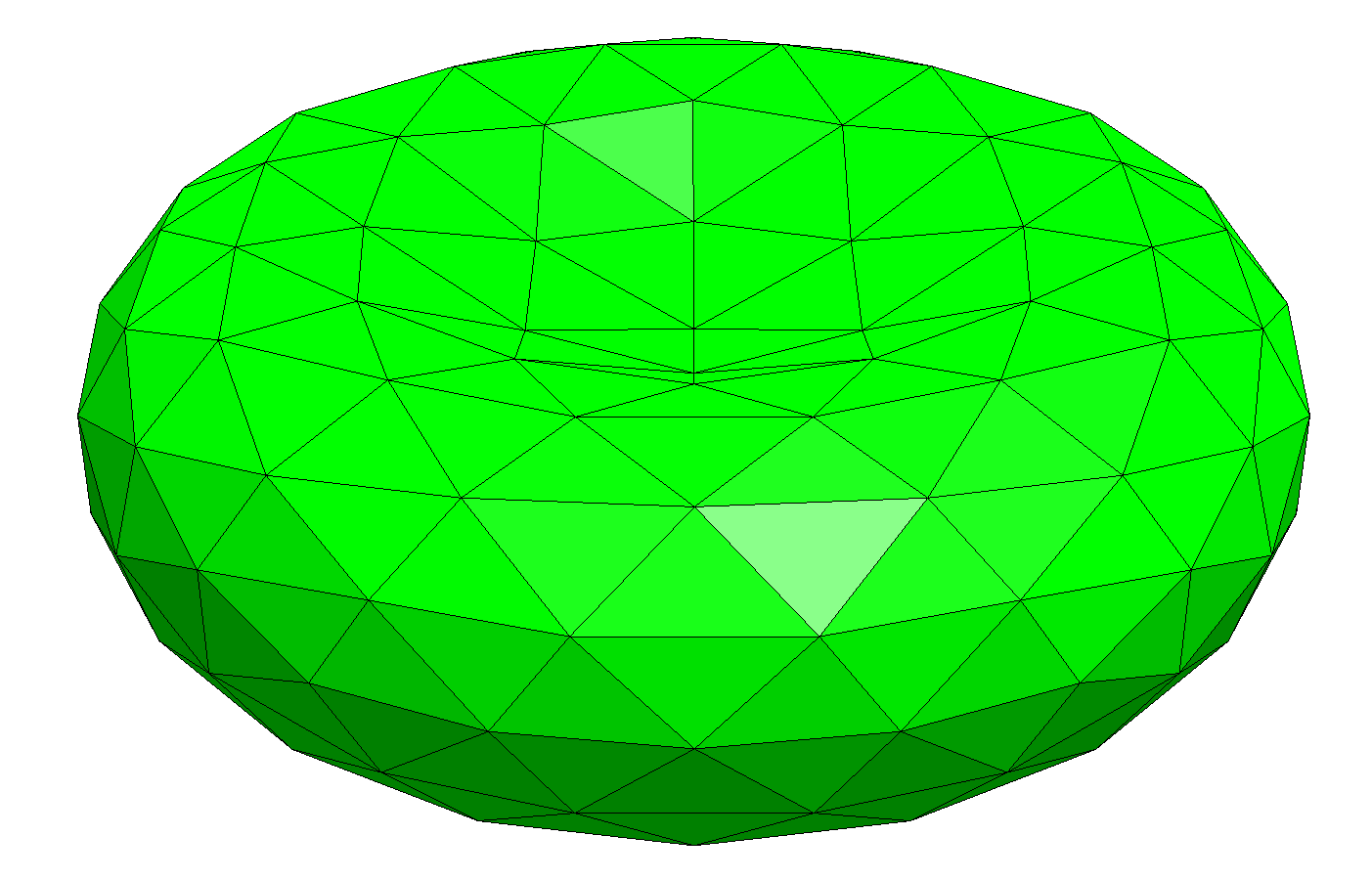}
	\caption{Icosahedron and biconcave-oblate.}
	\label{fig:icos_biconcave}
\end{figure}

We compute the Canham-Helfrich-Evans energy \eqref{eq:canham_helfrich_energy} as in \cite{a_BILIKO_2020a} with $\kappa_b=1$ of a given sphere of radius $R=1$ and a biconcave-oblate described by the embedding
\begin{align*}
x = \sin(u)\sin(v),\qquad y=\sin(u)\cos(v),\qquad z=F(\cos(u)),
\end{align*}
where $(u,v)\in [-\pi/2,\pi/2]\times[0,2\pi]$ are the parametric coordinates of a sphere and $F(p)=0.54353p+0.121435p^3-0.561365p^5$.

The sphere is approximated by an icosahedron and a regular subdivision by dividing each triangle into four sub-triangles. For the biconcave-oblate the points of the icosahedron are appropriately transformed with $F(\cdot)$, compare Figure~\ref{fig:icos_biconcave}. The results for the lowest order method can be found in Figure~\ref{fig:presc_sphere} and \ref{fig:presc_biconc} on the left, which converge to the correct values.

\begin{figure}[h]
	\centering
	\includegraphics[width=0.38\textwidth]{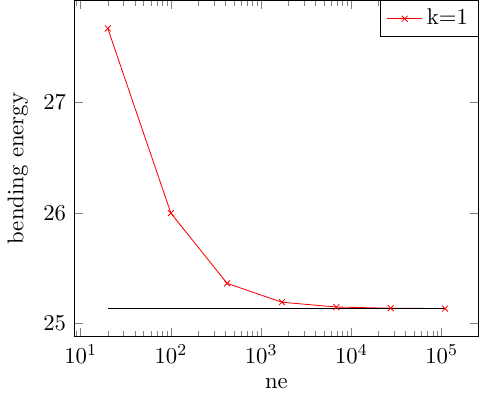}
	\includegraphics[width=0.38\textwidth]{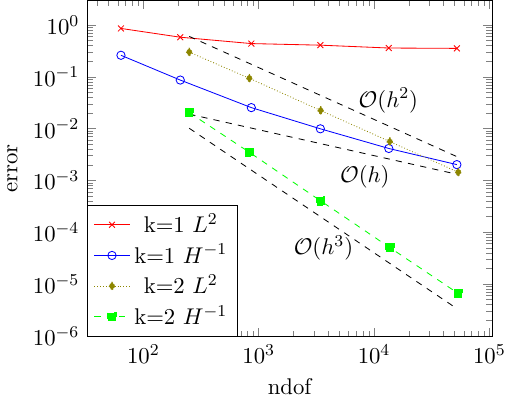}
	
	\caption{Left: Bending energy of icosahedron with lowest-order elements with respect to number of elements (ne). Exact value: $8\pi$. Right: $L^2$ and $H^{-1}$ error for unstructured meshes with linear and quadratic elements with respect to the number of degrees of freedom (ndof).}
	\label{fig:presc_sphere}
\end{figure}

\begin{figure}[h]
	\centering
	\includegraphics[width=0.38\textwidth]{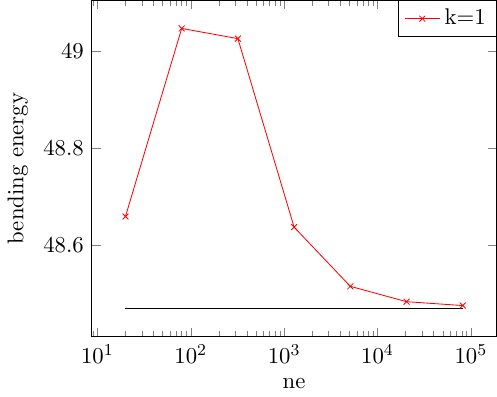}
	\includegraphics[width=0.38\textwidth]{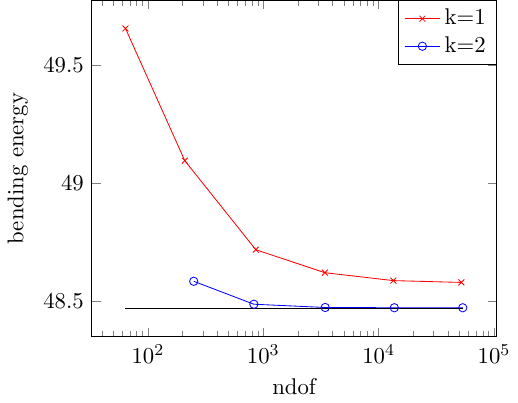}
	
	\caption{Left: Bending energy of icosahedron with lowest-order elements with respect to number of elements. Right: $L^2$-norm of bending energy for unstructured meshes with linear and quadratic elements with respect to number of degrees of freedom. Reference value is $48.47$ \cite{a_BILIKO_2020a}.}
	\label{fig:presc_biconc}
\end{figure}

Next, we consider a sequence of non-nested unstructured meshes generated by NETGEN \cite{Sch97} approximating the sphere and biconcave-oblate with linear and quadratic polynomials, where the geometry is isoparametrically curved. As depicted in Figure~\ref{fig:presc_sphere} and \ref{fig:presc_biconc} on the right the high-order method converges to the exact and reference value, respectively, in the $L^2$- and $H^{-1}$-norm, namely
\begin{align*}
&\|\kappa_h-\kappa_{\mathrm{ref}}\|^2_{L^2}:=\int_{\Surf}|\kappa_h-\kappa_{\mathrm{ref}}|^2\,ds,\quad \|\kappa_h-\kappa_{\mathrm{ref}}\|_{H^{-1}}:=\sup\limits_{\sigma\in H^1(\Surf)} \frac{\langle \kappa_h-\kappa_{\mathrm{ref}},\sigma\rangle }{\|\sigma\|_{H^1}}.
\end{align*} 
As observed in Figure~\ref{fig:presc_sphere} only the convergence rates differ, namely quadratic and cubic order, respectively. Note, that for the $H^{-1}$-norm we solve the auxiliary problem $-\Delta^S u_h=\kappa_h-\kappa_{\mathrm{ref}}$ on $\T_h$ with $u_h\in V_h^l(\T_h)$, $l>k$ as there holds $\|u\|_{H^1}=\|\kappa_h-\kappa_{\mathrm{ref}}\|_{H^{-1}}$ in the continuous case. In the lowest order case, however, the $L^2$-norm is not converging to the reference values, whereas the $H^{-1}$ error does. This is in agreement with the fact that the (discrete) mean curvature is a distribution, rather than a function, and thus, in general, we cannot expect convergence for linear elements in the $L^2$-norm.

The non-convergence of the lowest order curvature computation seems to be in contrast with the approach of using this curvature approximation for the shape optimization algorithm. Therefore, we consider the following test case: The same unstructured sequence of meshes approximating the sphere as used for the results in Figure~\ref{fig:presc_sphere} is taken and the shape optimization Algorithm~\ref{alg:gradient_alorithm} with $A_0$, $V_0$ as the initial shape, and $\kappa_b=0.01$, $c_V=10/|V_0|$, $c_A=5/|A_0|$, $c_{A_{\mathrm{loc},T}}=5/|T_0|$, and $N_{\mathrm{max}}=1000$ is used such that the initial shape is very close to a smooth sphere being the unique solution for this problem. In Figure~\ref{fig:presc_sphere_shapeopt} we can see that the shape optimization algorithm deforms the meshes only marginally in such a way that the $L^2$-norm of the curvature error now converges, even with a quadratic rate. An explanation of this phenomenon is that on the one
hand the curvature approximation enters the shape optimization step by being paired with an $H^1$-test function, compare the $\kappa\,\sigma$ term in \eqref{eq:lag_func_curv_shape_der}, enabling the convergent $H^{-1}$ property and on the other hand the algorithm tries to minimize the (local) $L^2$-norm of the mean curvature generating a sequence of optimal meshes - a quadratic convergence rate is optimal with respect to linear polynomials.

This supports and verifies the usage of linear approximations for the curvature as the shape optimization algorithm generates as a side-product meshes with beneficial curvature computation property.

\begin{figure}[h]
	\centering
	\includegraphics[width=0.3\textwidth]{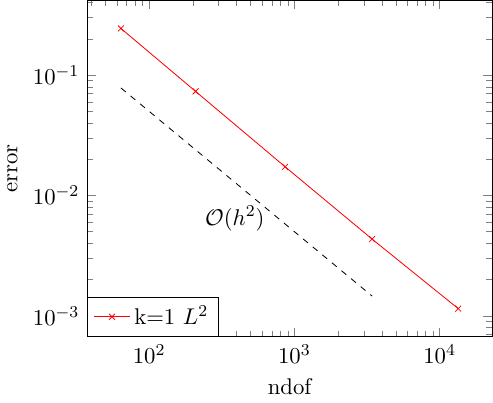}
	\includegraphics[width=0.34\textwidth]{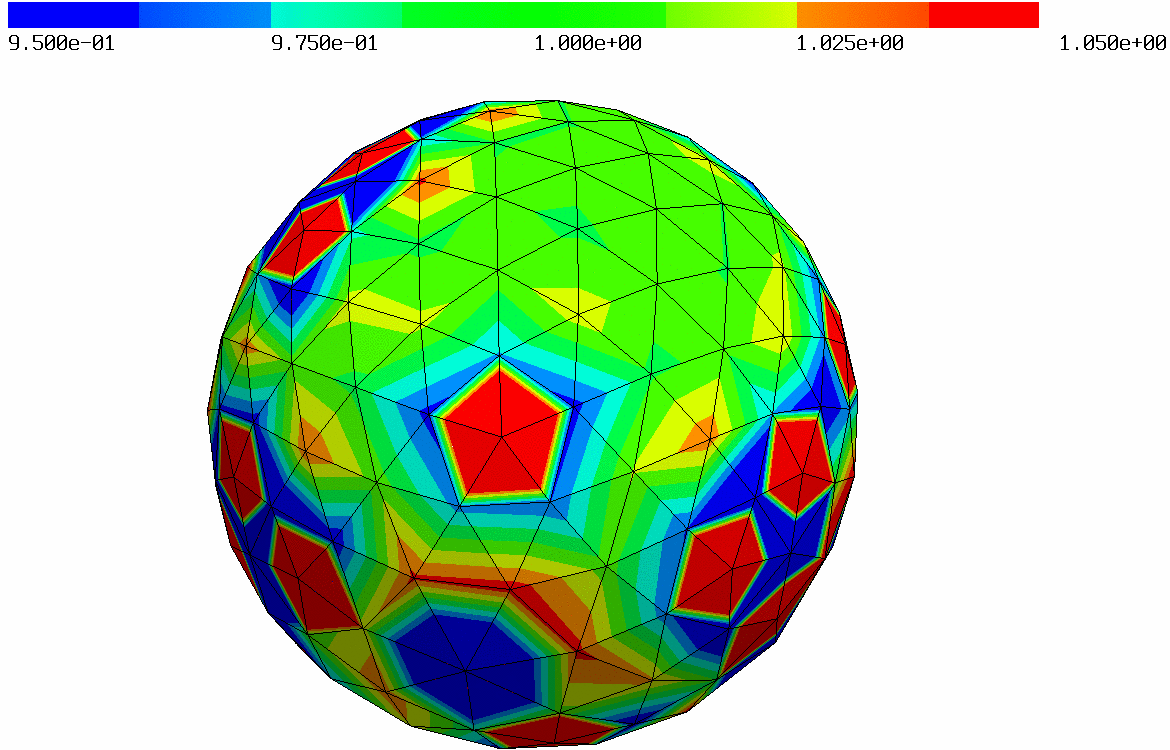}
	\includegraphics[width=0.34\textwidth]{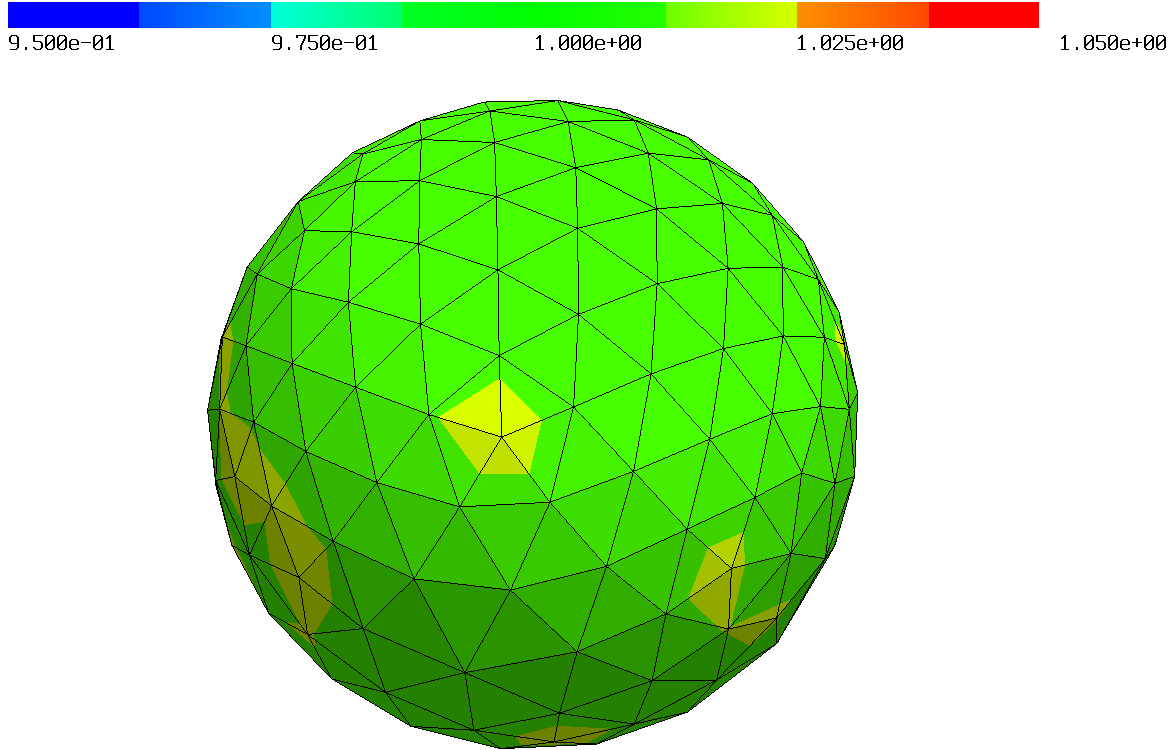}
	\caption{Left: $L^2$-error of mean curvature at unstructured meshes with linear elements measured after $1000$ shape-optimization steps with respect to number of degrees of freedom. Right: Mean curvature on initial shape and after $1000$ optimization steps.}
	\label{fig:presc_sphere_shapeopt}
\end{figure}

\subsection{Equilibrium shapes}
\label{subsec:equ_shape}
\begin{figure}[h]
	\centering
	\includegraphics[width=0.26\textwidth]{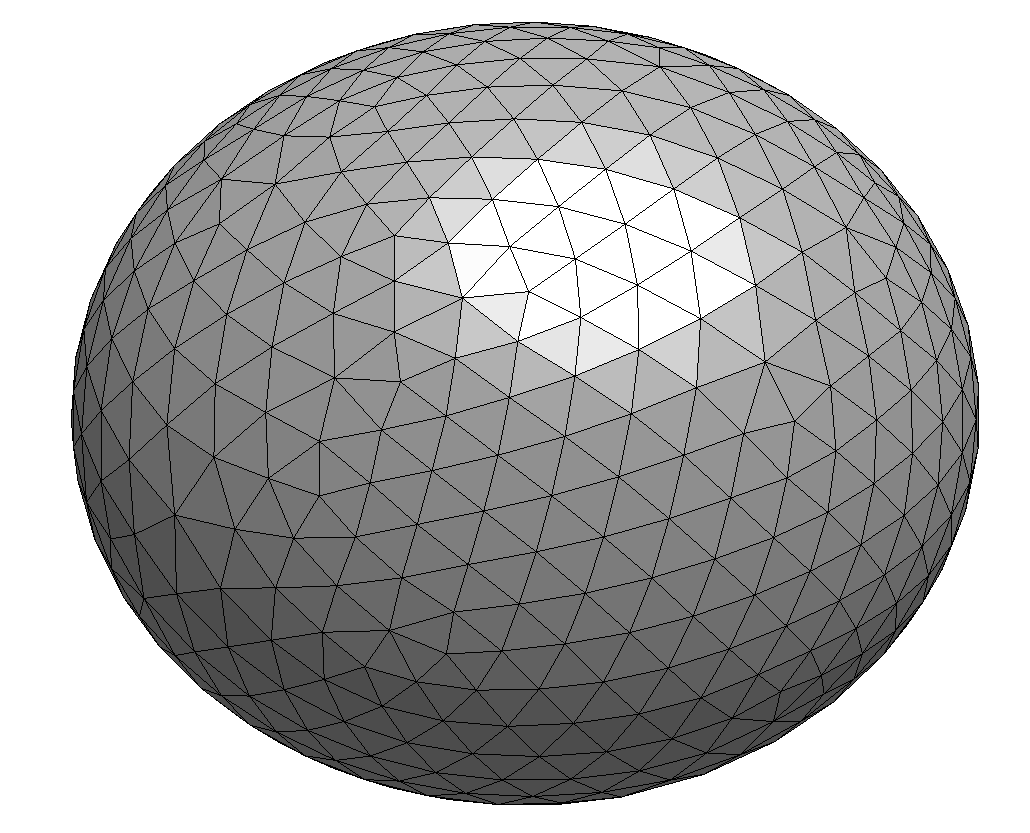}\hspace{1.5cm}
	\includegraphics[width=0.26\textwidth]{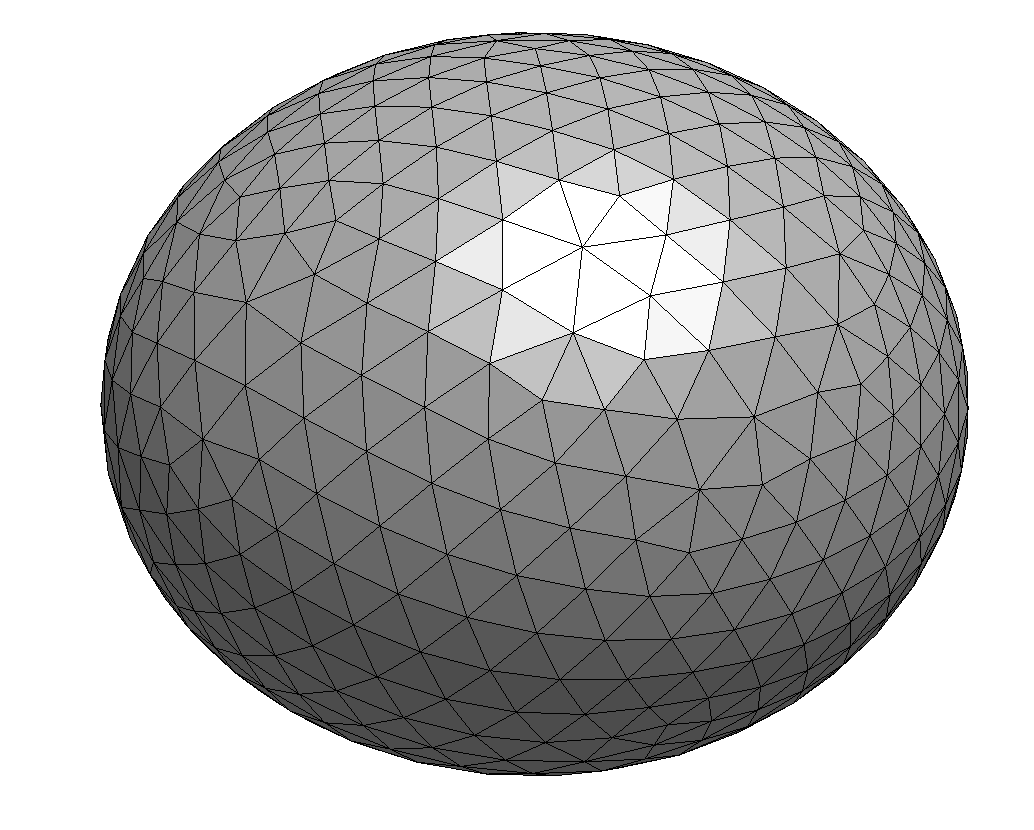}
	\caption{Initial unstructured prolate (left) and oblate (right) shapes with 1322 and 1258 triangles, respectively.}
	\label{fig:initial_shapes}
\end{figure}

In this example closed membranes with area $4\pi$ are subjected to different volume constraints leading to varying equilibrium shapes. We fix in this section the parameters $\kappa_b=0.01$, $H_0=0$, $c_V = \frac{1}{|V_0|}$, $c_A = \frac{2}{|A_0|}$, and $c_{A_{\mathrm{loc},T}}=\frac{1}{|T_0|}$ following \cite{a_BILIKO_2020a}. As we have zero spontaneous curvature, $H_0=0$, the equilibrium shapes are axisymmetric and reference computations which can be calculated analytically by solving a 2D Euler-Lagrange ODE as in \cite{DH76,SBL91}. To reproduce the phase diagram from \cite{SBL91} we start with an ellipsoid centered at the origin with semi-axes $a$, $b$, and $c$. By using a prolate ($c=1.1017$, $a=b=0.95$) and an oblate ($c=0.9$, $a=b=1.5065$) as initial shapes, see Figure~\ref{fig:initial_shapes}, we cover the prolate and oblate branches. When starting from a sphere it would be crucial to add a random noise on the initial shape to enable the shape optimization algorithm to deform the shape. As we consider unstructured grids for the oblate and prolate initial shape  non-axisymmetric deformations are induced without additionally introducing noise. Otherwise it may happen that the initial shape gets directly stuck in a  local minimum. In the phase diagram the normalized Canham-Helfrich-Evans energy \eqref{eq:canham_helfrich_energy} with respect to the bending energy of a perfect sphere
\begin{align}
E^*=E/(8\pi \kappa_b)
\end{align}
is plotted against the so-called reduced volume
\begin{align}
\bar{\nu}:= V\Big/\left(\frac{4\pi}{3}\sqrt{\frac{A}{4\pi}}^3\right).
\end{align}

Stable branches are given by prolate for $0.652 < \bar{\nu} < 1$, oblate between $0.592 < \bar{\nu} < 0.651$, and stomatocytes otherwise $0<\bar{\nu}<0.591$ \cite{SBL91}.

For the first test we consider the lowest order method \eqref{eq:lag_func_curv_shape_der_lo} on the initial shapes from Figure~\ref{fig:initial_shapes} and then apply the shape optimization Algorithm~\ref{alg:gradient_alorithm} with the conservative choice of $\alpha=0.025$, $N_{\mathrm{max}}=100000$, $\delta=1\times 10^{-7}$ without increasing $\alpha$ after an accepted step.

\begin{figure}[h]
	\centering
	\includegraphics[width=0.4\textwidth]{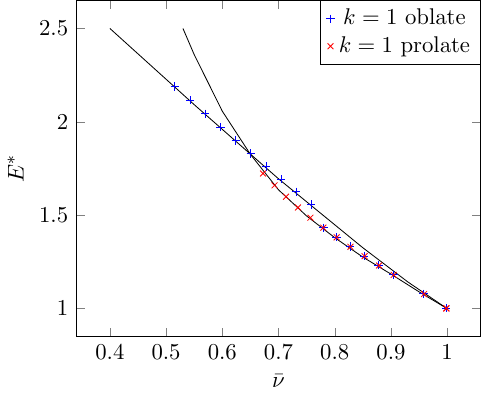}
	\includegraphics[width=0.4\textwidth]{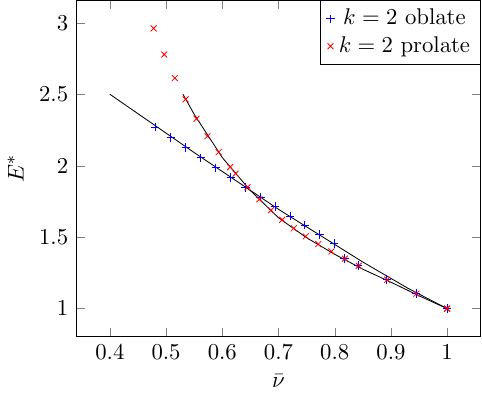}
	\caption{Results of equilibrium shapes for polynomial orders $k=1$ (left) and $k=2$ (right) for prolate and oblate shapes subjected to different volume constraints.}
	\label{fig:result_equ_shape}
\end{figure}

\begin{figure}[h]
	\centering
	\includegraphics[width=0.4\textwidth]{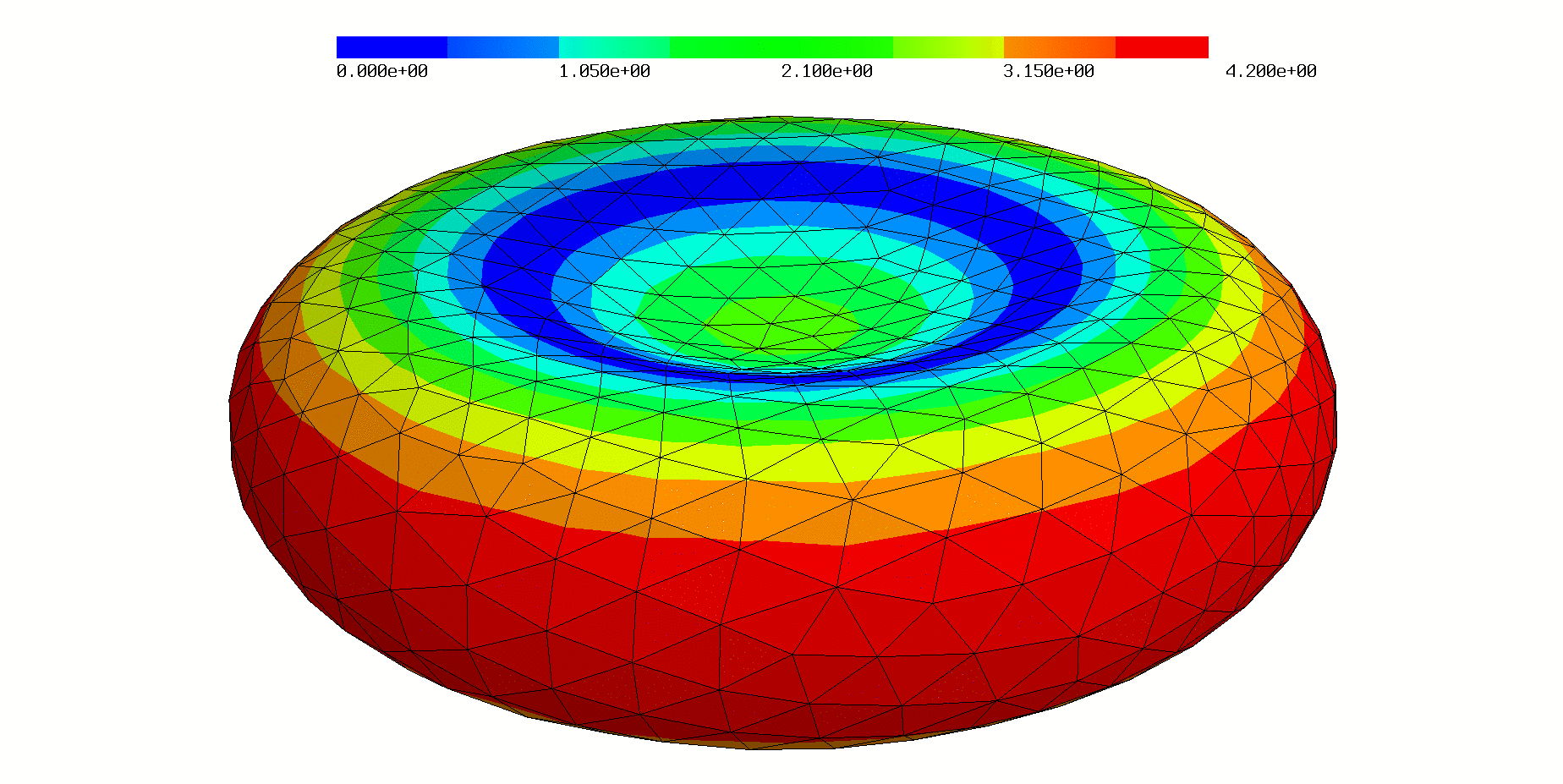}\hfill
	\includegraphics[width=0.4\textwidth]{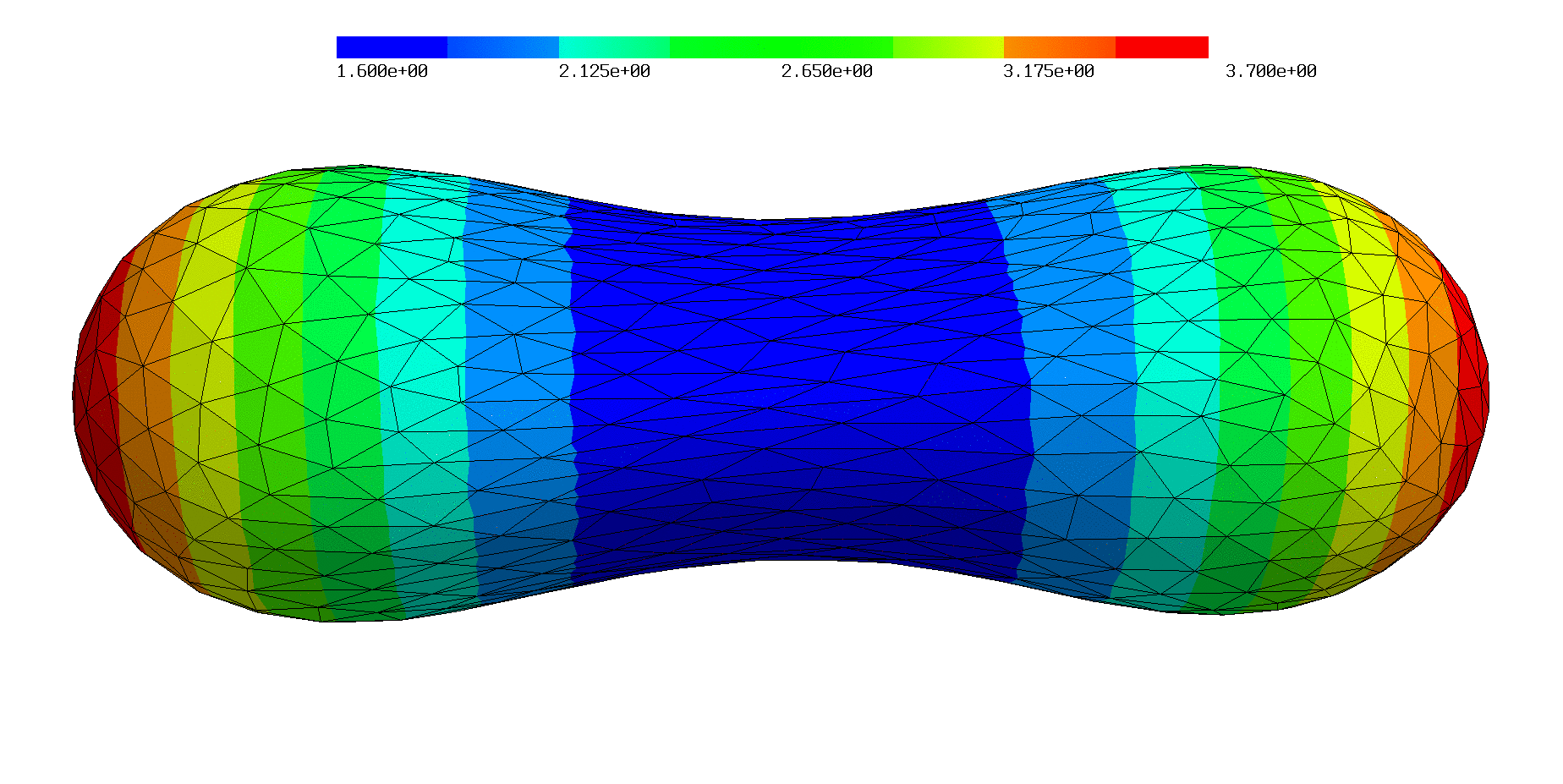}
	
	\caption{Characteristic solutions for oblate (left) and prolate (right) branch (red blood cell, dumbbell) with polynomial order $k=1$, and $\bar{\nu}=0.597$ and $\bar{\nu}=0.713$, respectively.}
	\label{fig:char_solution_prol_obl}
\end{figure}

The results depicted in Figure~\ref{fig:result_equ_shape} (left) are in good agreement with the theoretical branches, see Figure~\ref{fig:char_solution_prol_obl} for the characteristic solutions for the oblate and prolate branches.

With the oblate initial shape we were able to converge towards the unstable oblate branch for $0.652<\bar{\nu}<0.775$, however, for larger reduced volume $\bar{\nu}$ after a longer computation time the oblate shape changes significantly converging to a shape on the prolate branch.
For the prolate branch leading to the characteristic dumbbell solutions the shape gets heavily stretched for $\bar{\nu} < 0.651$ and the mesh regularity becomes ill-shaped leading to an extremely small step-size and the method does not converge anymore.

\begin{figure}[h]
	\centering
	\includegraphics[width=0.4\textwidth]{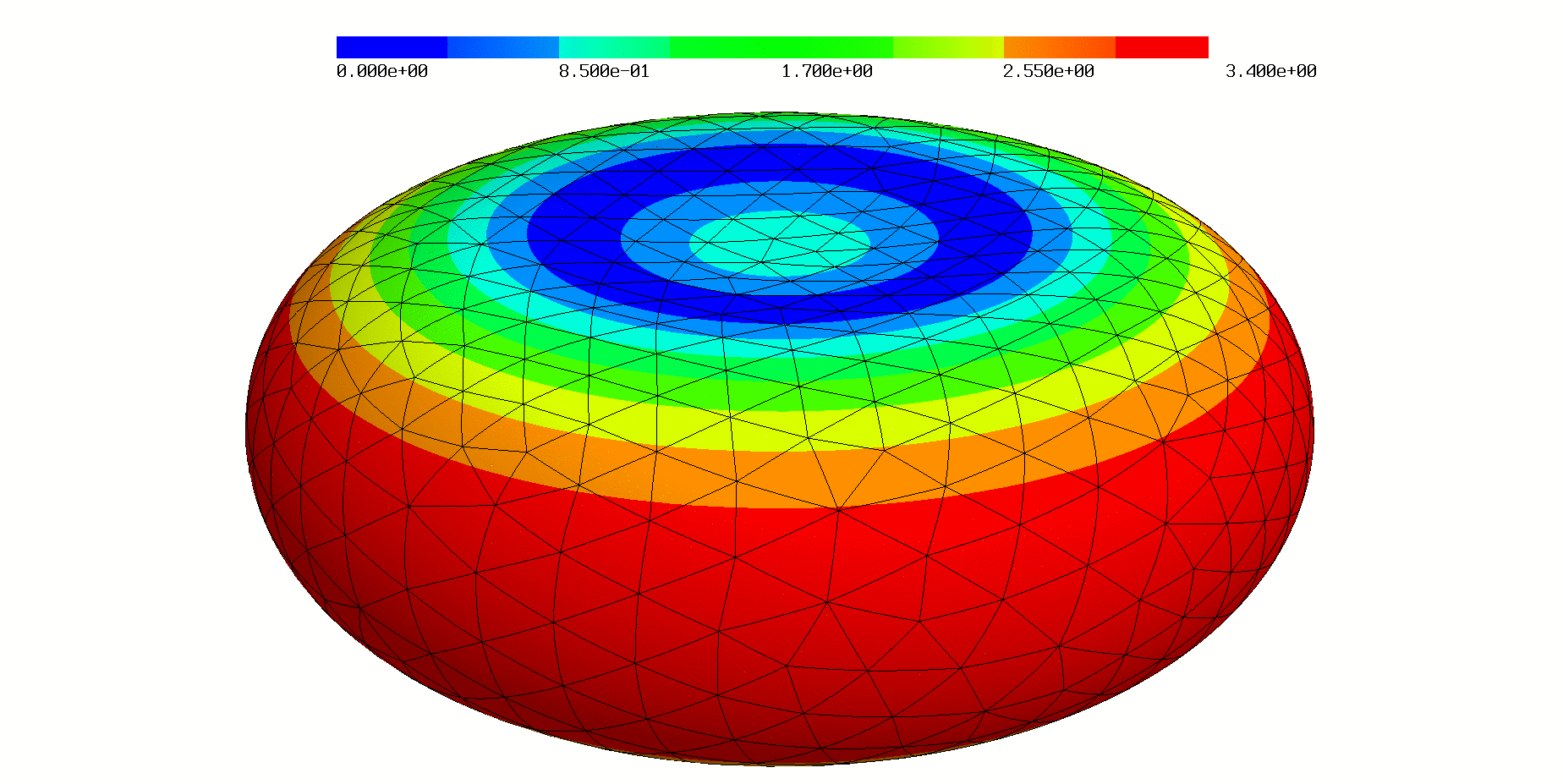}\hfill
	\includegraphics[width=0.4\textwidth]{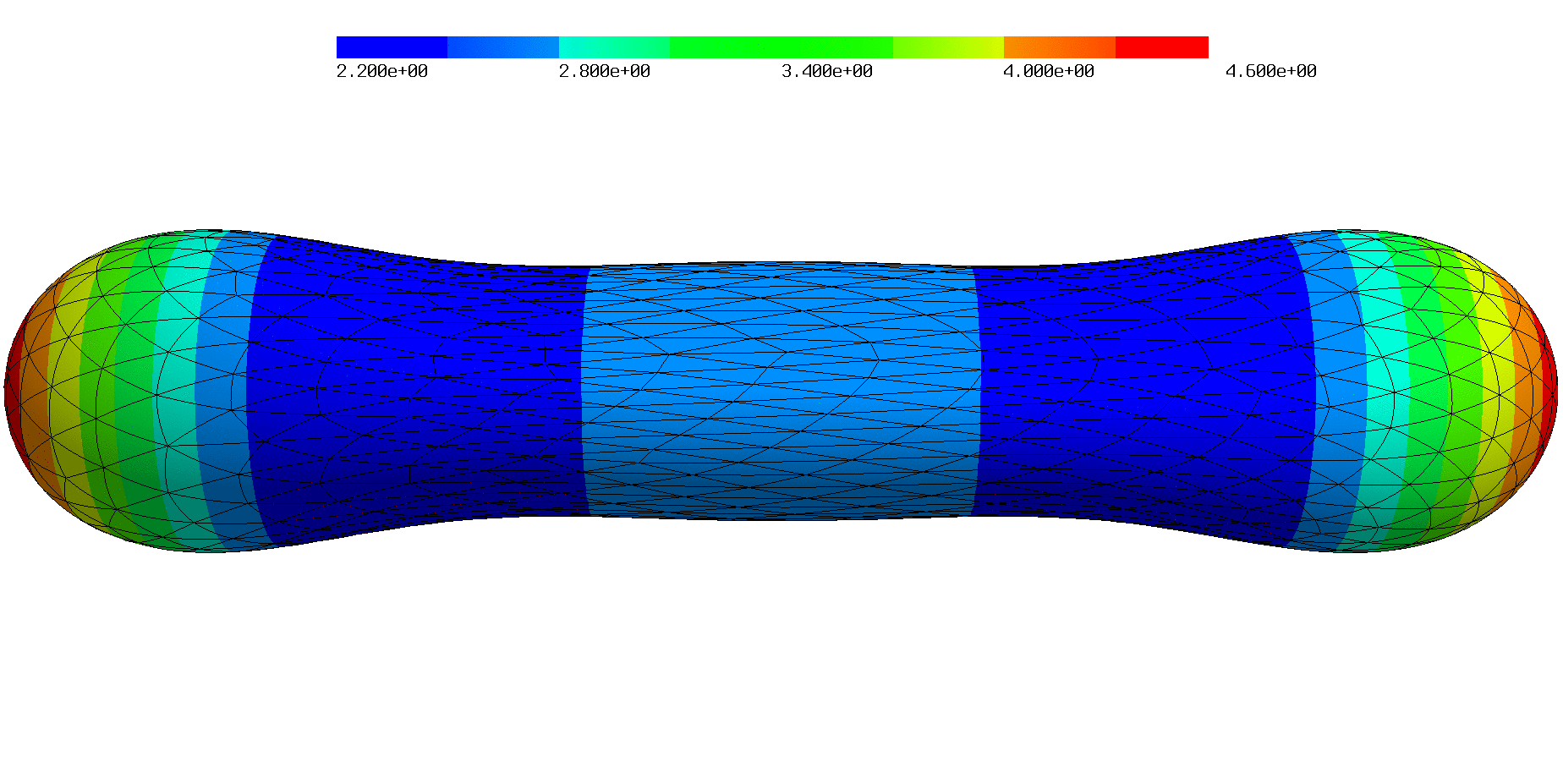}
	
	\caption{Oblate and prolate solution with polynomial order $k=2$ and $\bar{\nu}=0.773$ and $\bar{\nu}=0.594$, respectively.}
	\label{fig:sol_equ_k2}
\end{figure}

Next, we repeat the same experiments, however, with quadratic instead of linear polynomials. Further the meshes are now curved accordingly. Further, we use the improved area preservation procedure described in Section~\ref{subsec:improved_area_cons} solving a Stokes system in each iteration step. As depicted in Figure~\ref{fig:result_equ_shape} (right) the same qualitative solutions are produced, however, due to the higher polynomial degree the mesh becomes more robust with respect to the mesh quality and thus, we can follow the (unstable) prolate branch longer than for $k=1$. Further, we are also able to stay longer on the unstable oblate branch for $\bar{\nu}>0.652$, compare Figure~\ref{fig:sol_equ_k2} for two converged shapes.

\subsection{Spontaneous curvature $H_0$}
\label{subsec:spont_curv}
In this benchmark we consider the same parameters as in the previous section with the only difference of non-zero spontaneous curvature, $H_0\neq 0$. More precisely, we set $H_0=1.2$ and $H_0=1.5$ to reproduce the phase diagrams in \cite{SBL91} for these configurations. Further, as we expect strong deformed equilibrium shapes, we directly use quadratic elements, $k=2$, in combination with the improved area preservation procedure.

\begin{figure}[h]
	\centering
	\includegraphics[width=0.4\textwidth]{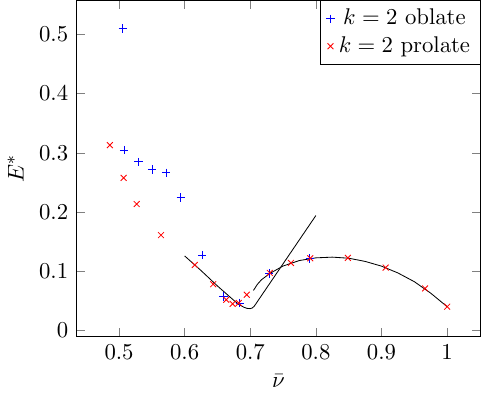}
	\includegraphics[width=0.4\textwidth]{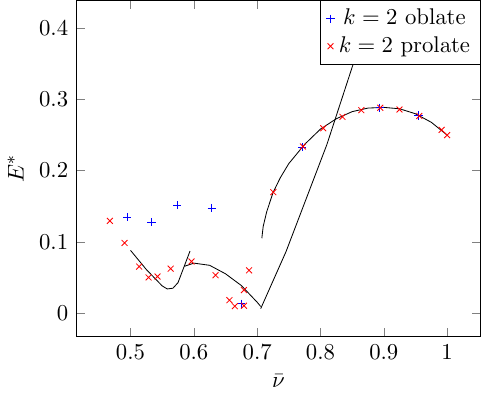}
	\caption{Results of spontaneous curvature $H_0=1.2$ (left) and $H_0=1.5$ (right) with polynomial degree $k=2$.}
	\label{fig:result_spont_p2}
\end{figure}

As depicted in Figure~\ref{fig:result_spont_p2} (left) we can reproduce the phase diagram for $H_0=1.2$ with prolate initial shapes leading to axisymmetric results, compare Figure~\ref{fig:prolate_shapes_12}. The solutions for $\bar{\nu}>0.7$ form well-shaped dumbbell shapes. At around $\bar{\nu}=0.7$ a bifurcation of branches exists leading to convergence problems. Further, the middle of the dumbbell solutions get heavily narrowed such that the mesh quality becomes critical. For decreasing reduced volume $\bar{\nu}$ the dumbbell shapes get longer and for $\bar{\nu}<0.6$ the middle radius increases not being the minimum anymore. For $\bar{\nu}<0.5$ the shape gets heavily stretched and it seems that the local maxima develop. The oblate initial shapes follow the prolate results for $\bar{\nu}>0.65$. In this benchmark, however, we observe that for $\bar{\nu}<0.65$ the unstable red blood cell type shapes converge to a not axisymmetric solution being similar to a dumbbell, but with three ends instead of two, see Figure~\ref{fig:oblate_shapes_12}. As the results are non axisymmetric this branch could not be analytically computed in \cite{SBL91}. For $\bar{\nu}\approx0.5$ the solution again seems to change. To fulfill the volume constraint the solution is nearly flattened yielding zero mean curvature around its center of gravity. We emphasize that the meshes get deformed heavily and further experiments with improved mesh regularity algorithms have to be performed in future work to investigate this branch. Further, we note that the oblate solutions for $\bar{\nu}<0.65$ did not fully converge to the equilibrate solution due to the distorted meshes, however, reflect the qualitative behavior of the exact solution.

\begin{figure}[h]
	\begin{tabular}{ccc}
		\includegraphics[width=0.33\textwidth]{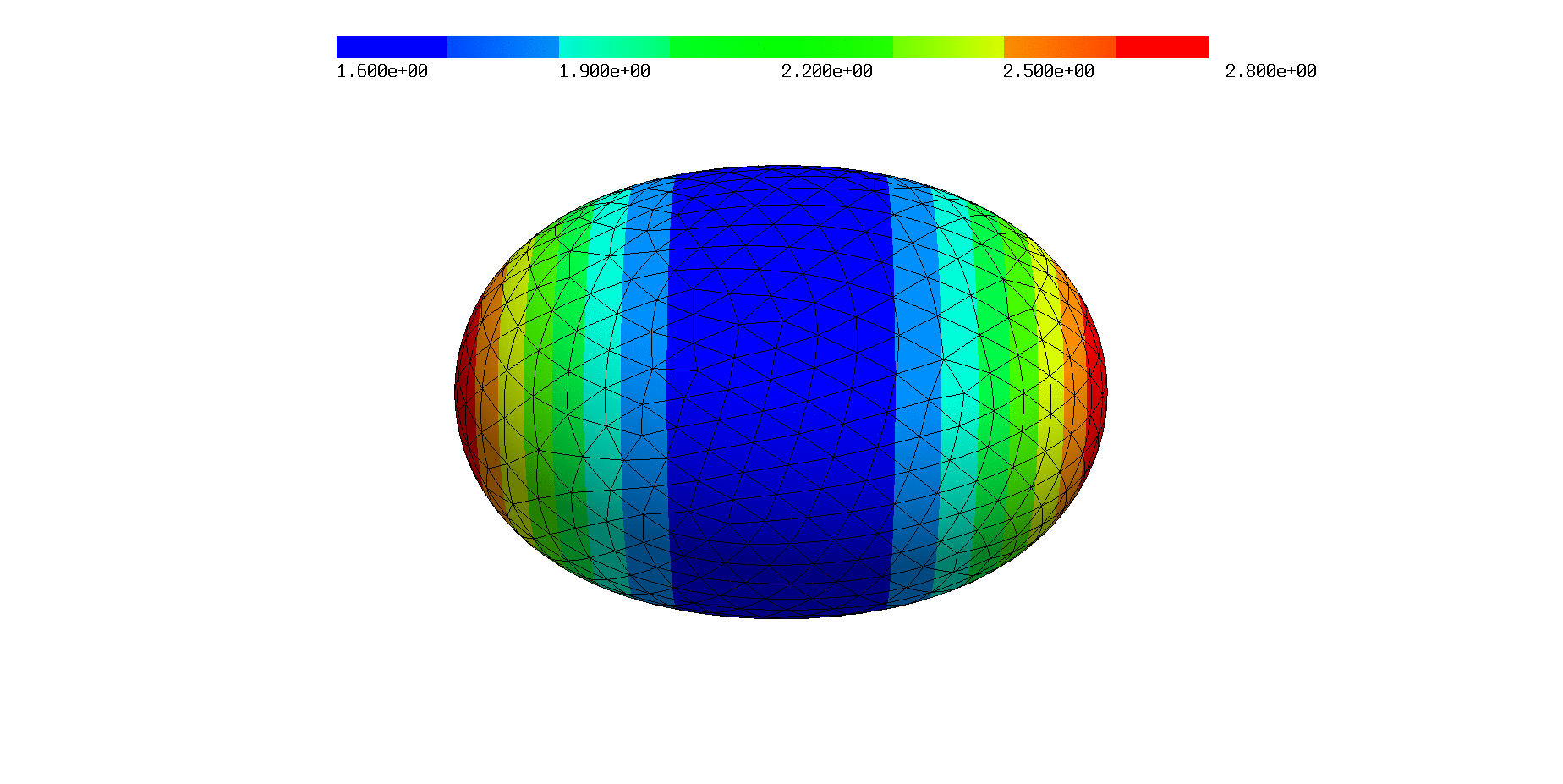}&
		\includegraphics[width=0.33\textwidth]{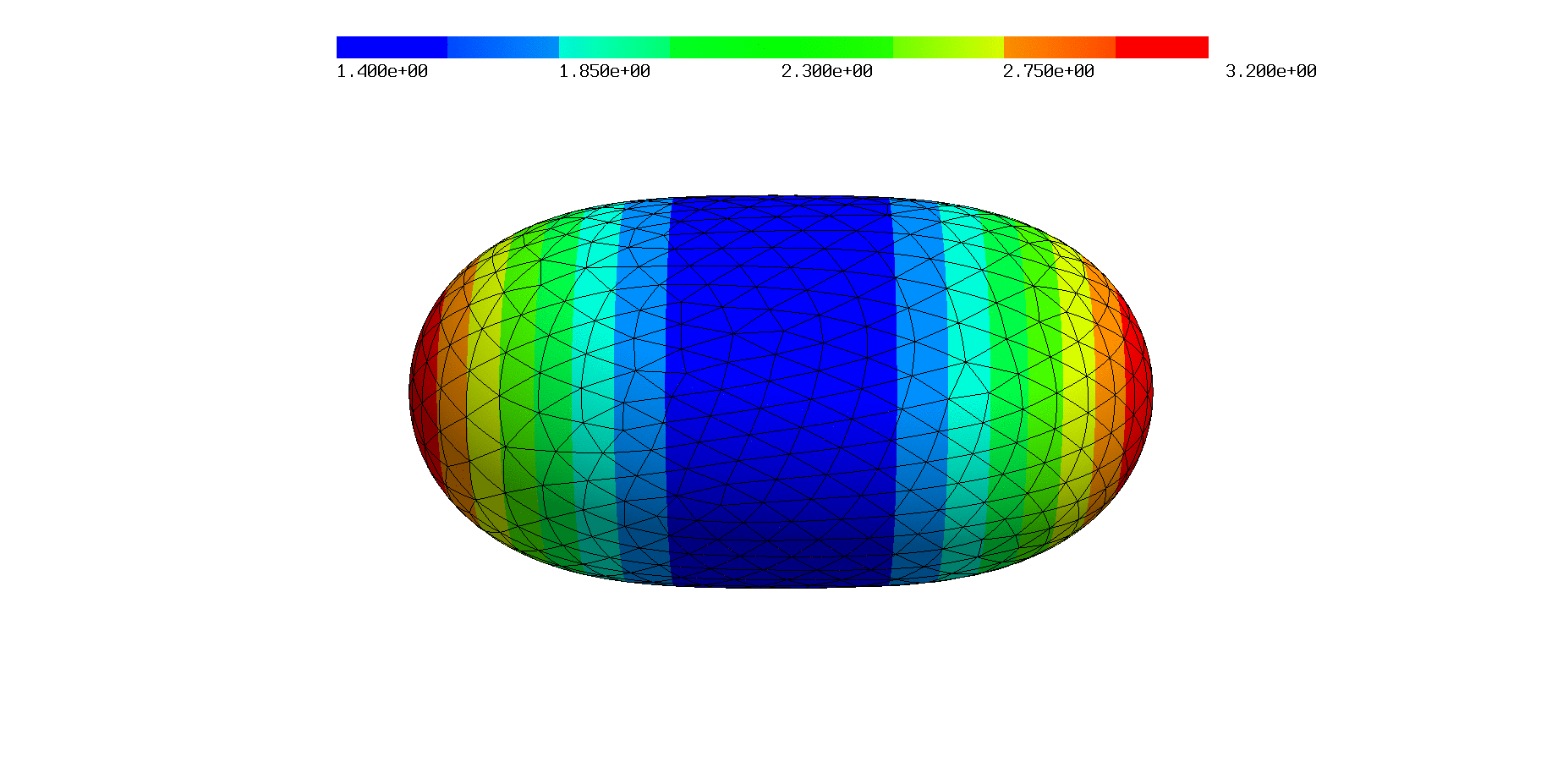}&
		\includegraphics[width=0.33\textwidth]{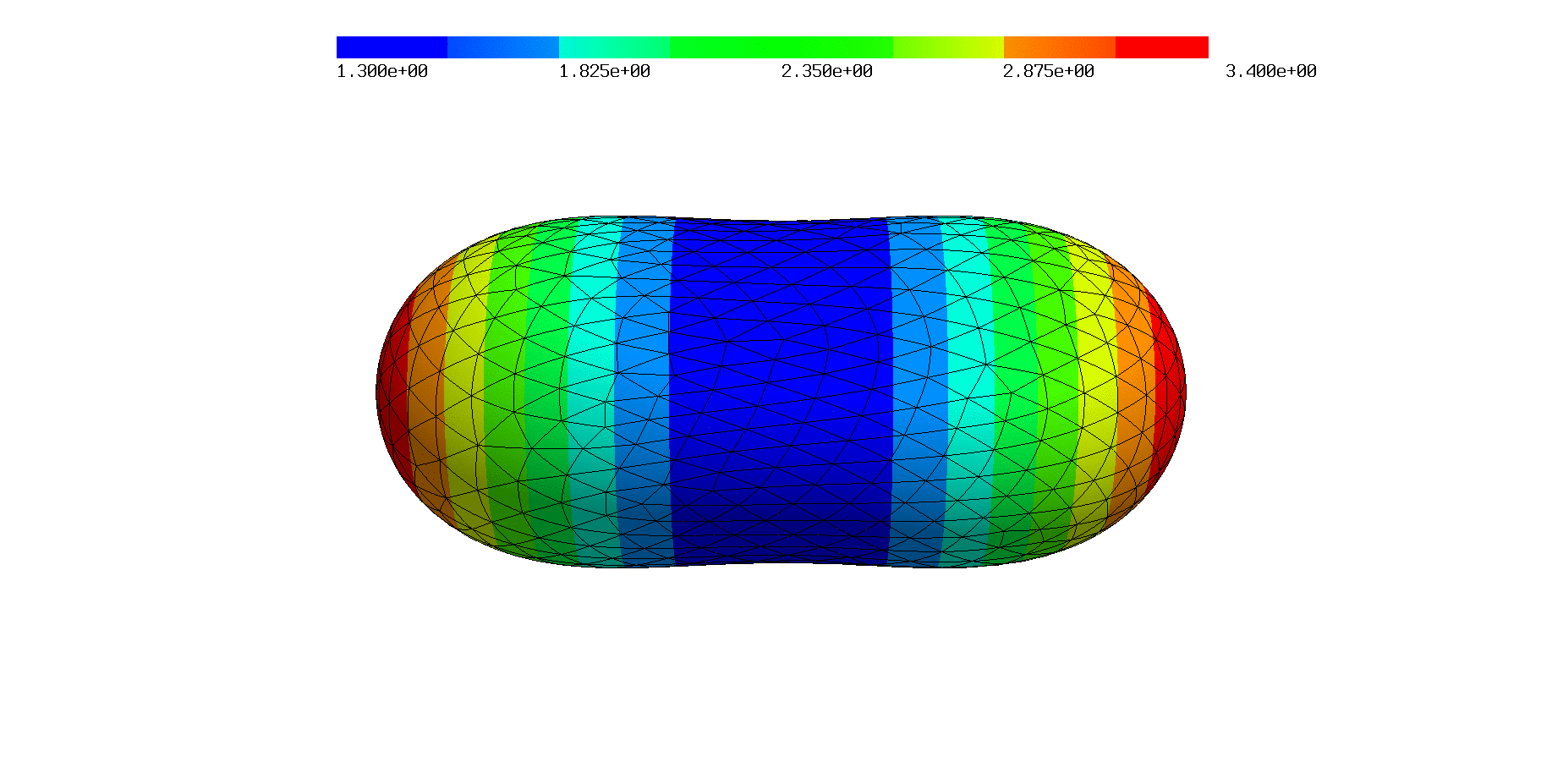}\\
		$\bar{\nu}=0.966$ & $\bar{\nu}=0.906$ & $\bar{\nu}=0.849$\\
		\includegraphics[width=0.33\textwidth]{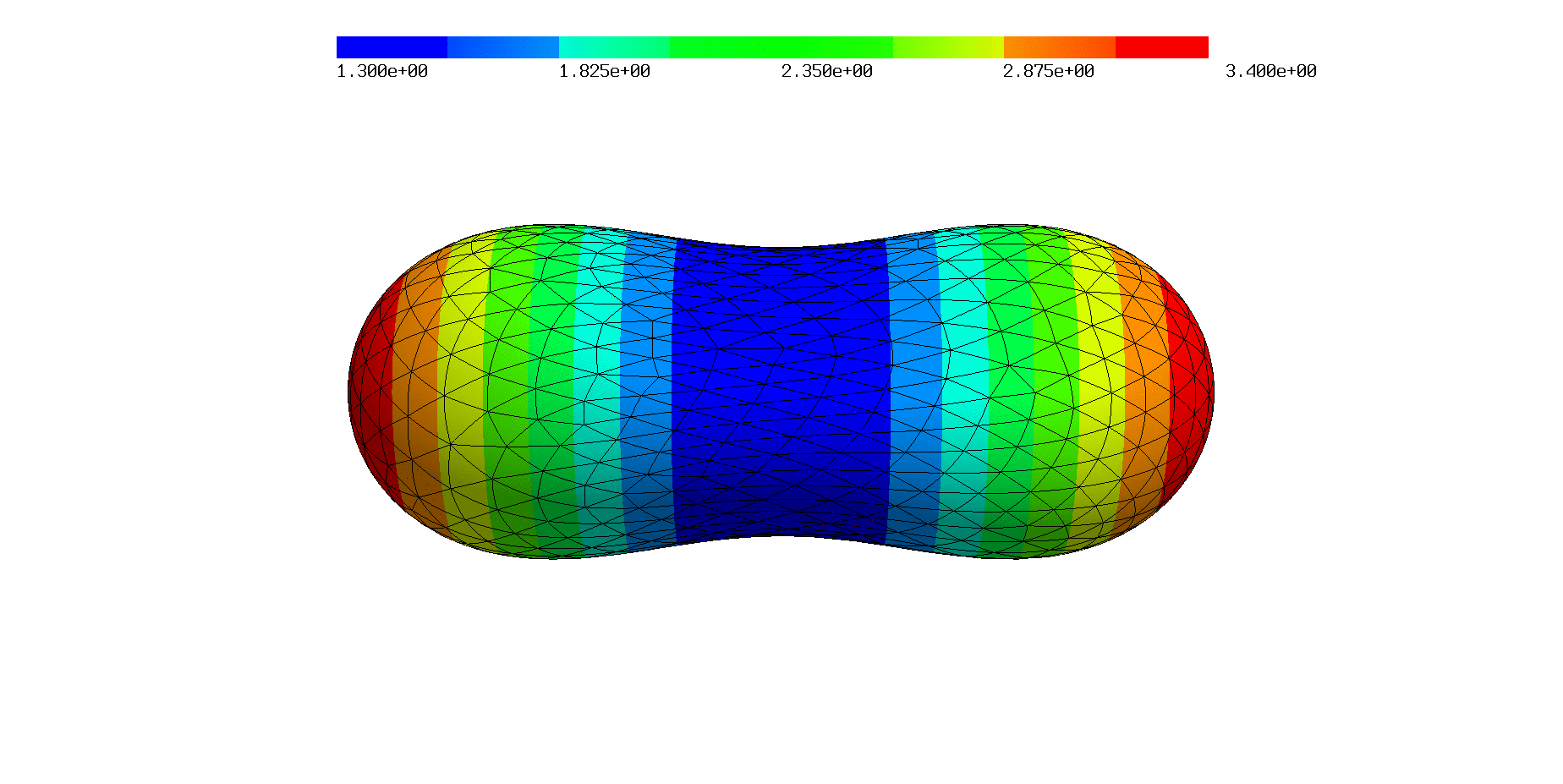}&
		\includegraphics[width=0.33\textwidth]{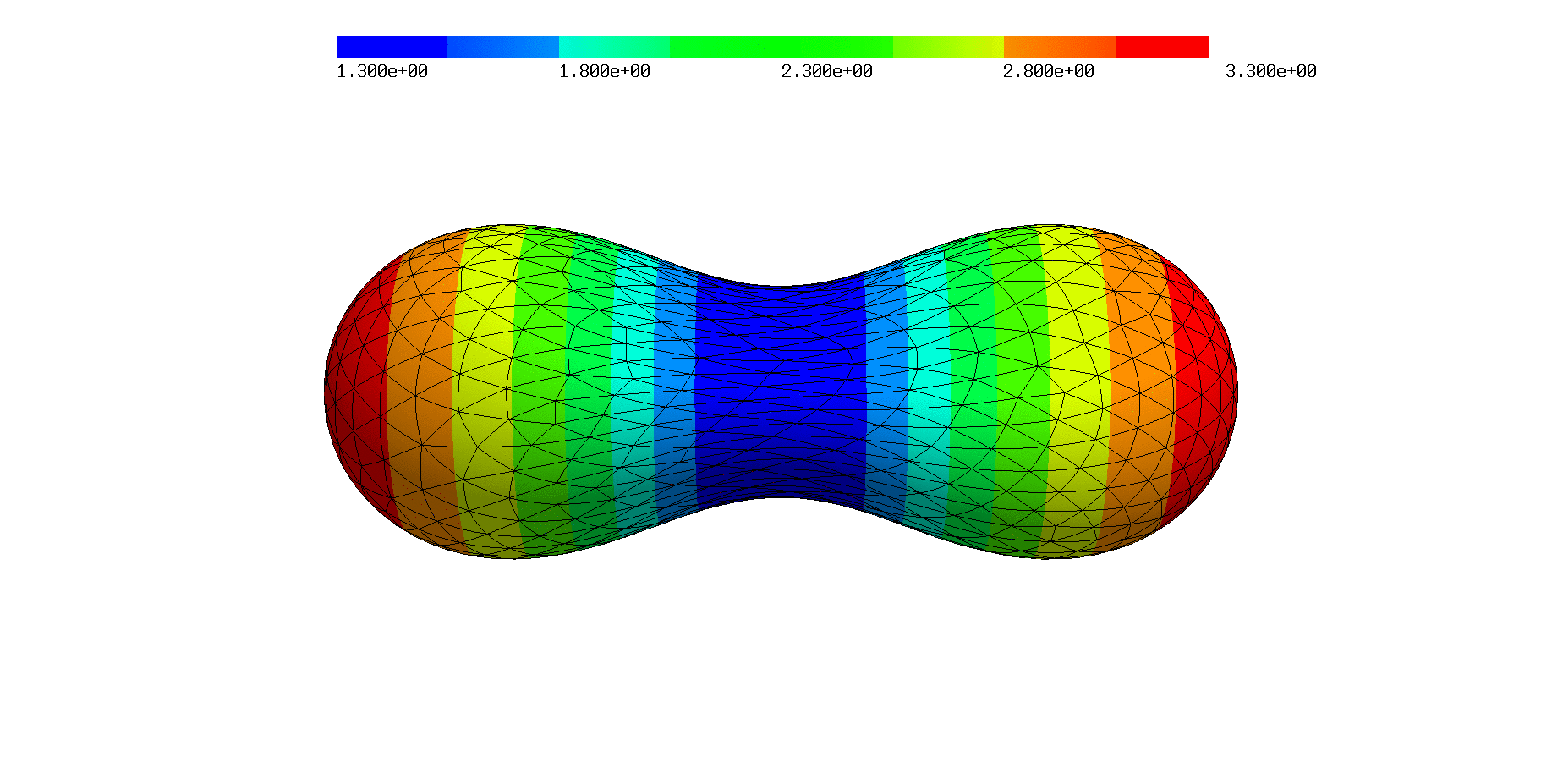}&
		\includegraphics[width=0.33\textwidth]{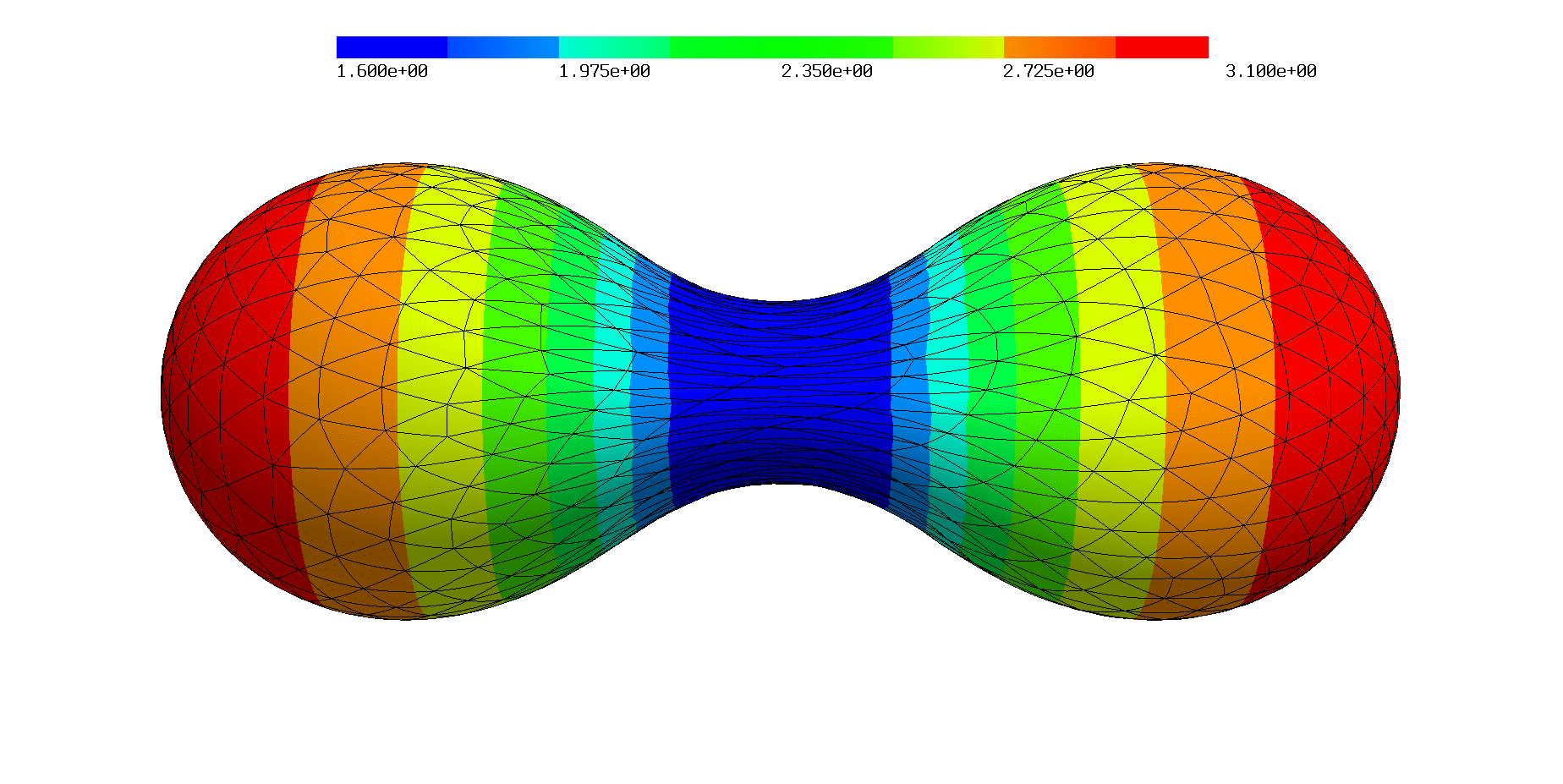}\\
		$\bar{\nu}=0.791$  & $\bar{\nu}=0.730$ & $\bar{\nu}=0.695$\\
		\includegraphics[width=0.33\textwidth]{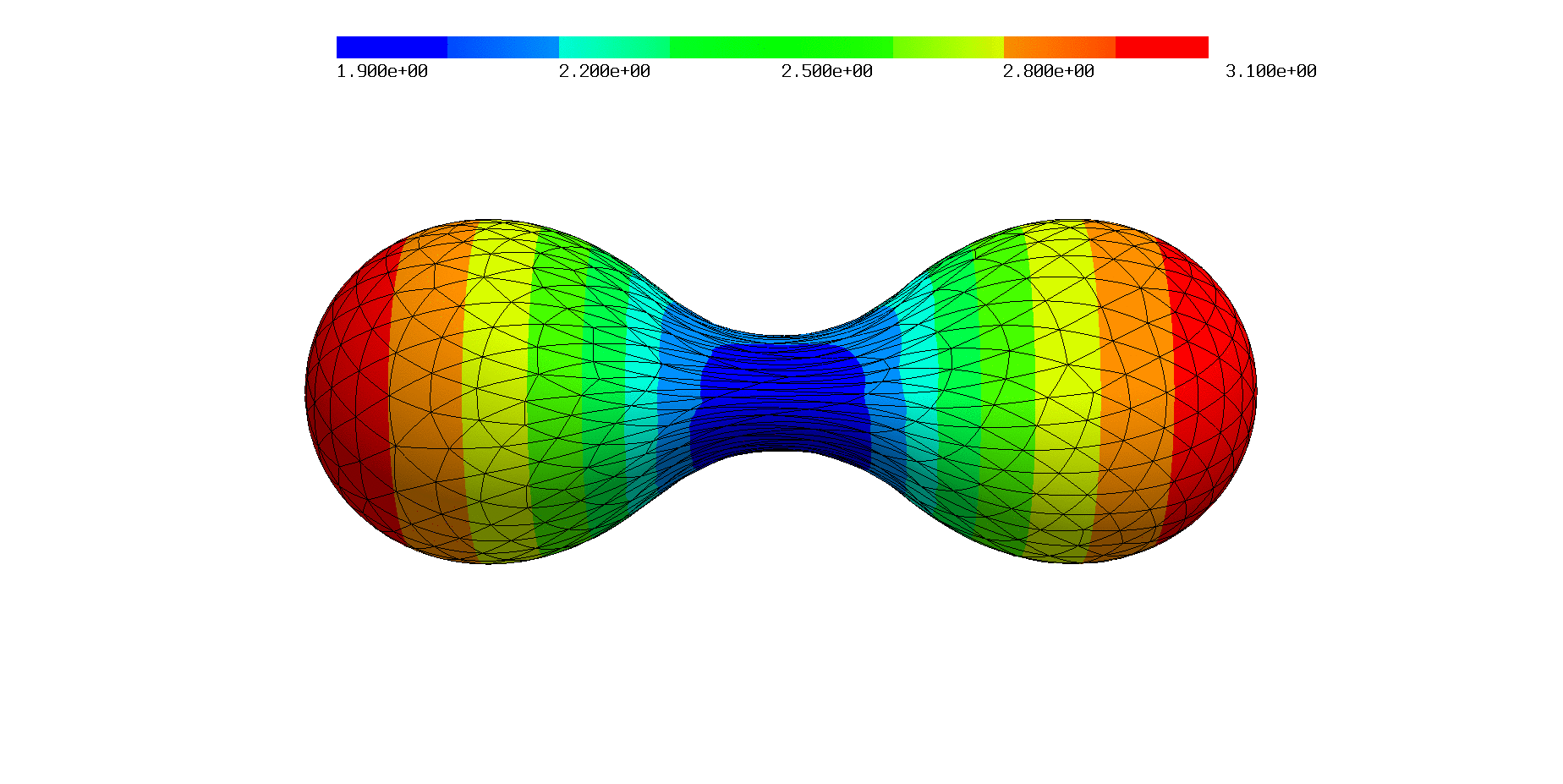}&
		\includegraphics[width=0.33\textwidth]{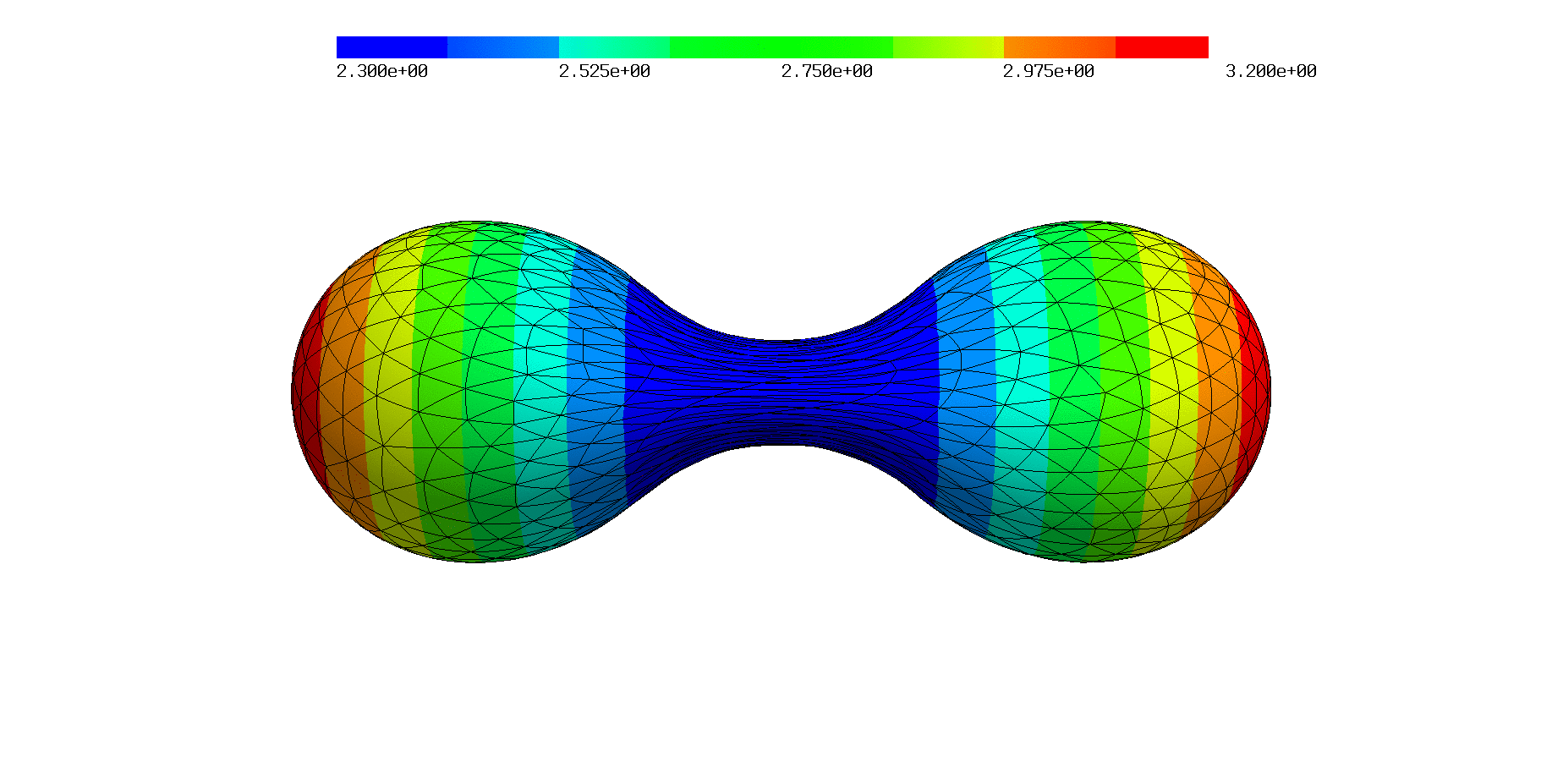}&
		\includegraphics[width=0.33\textwidth]{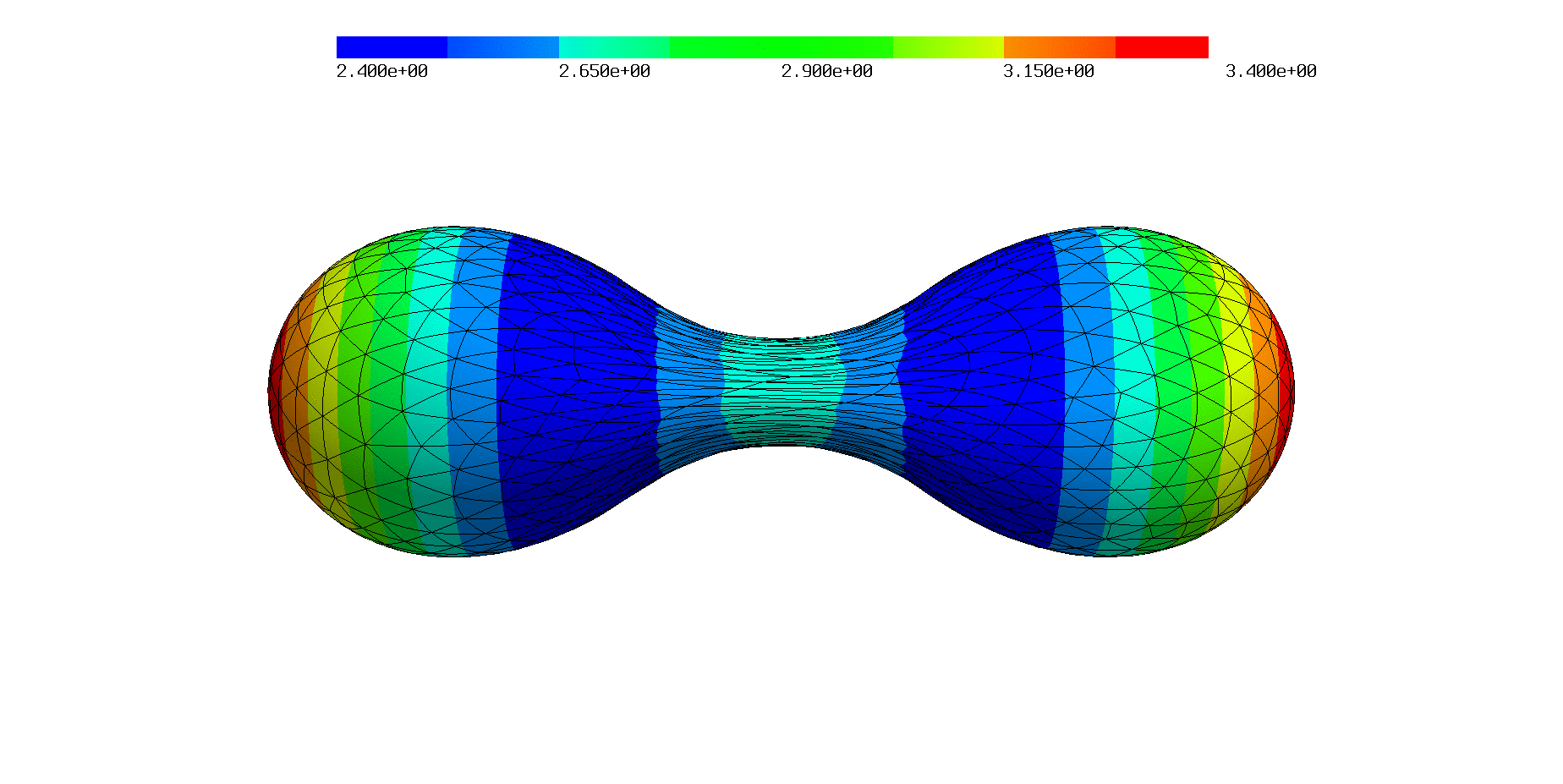}\\
		$\bar{\nu}=0.682$ & $\bar{\nu}=0.673$ & $\bar{\nu}=0.663$\\
		\includegraphics[width=0.33\textwidth]{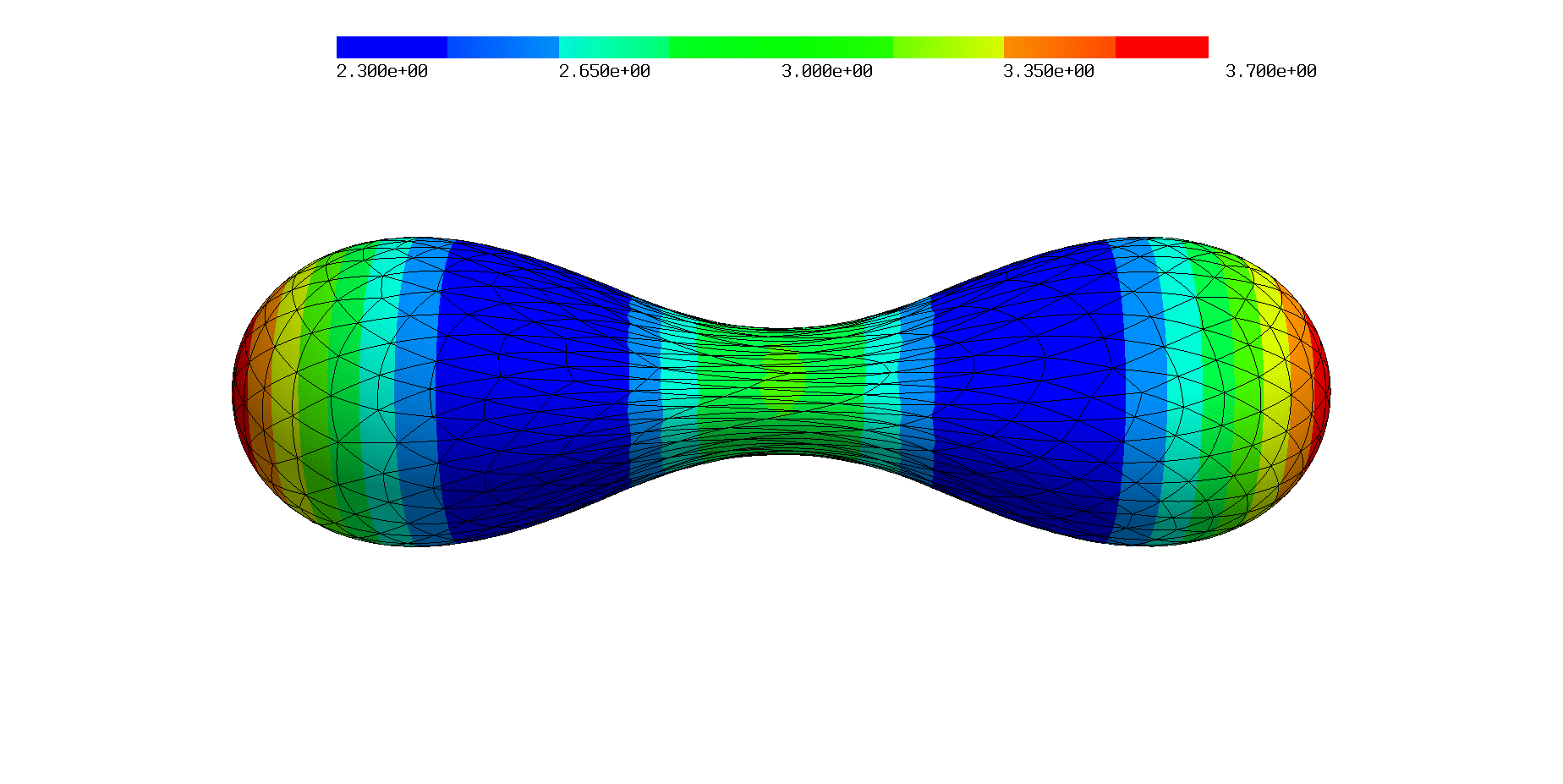}&
		\includegraphics[width=0.33\textwidth]{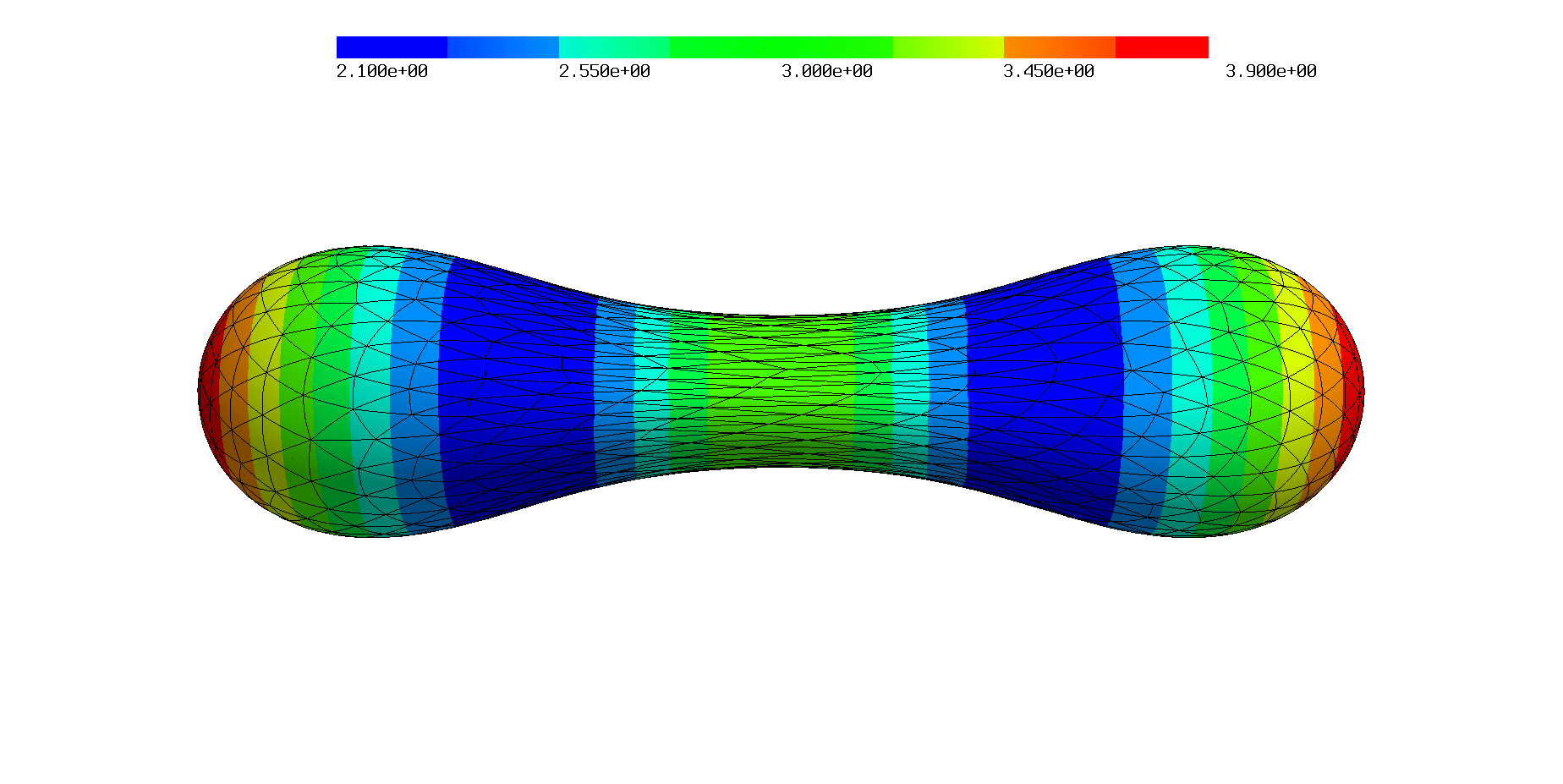}&
		\includegraphics[width=0.33\textwidth]{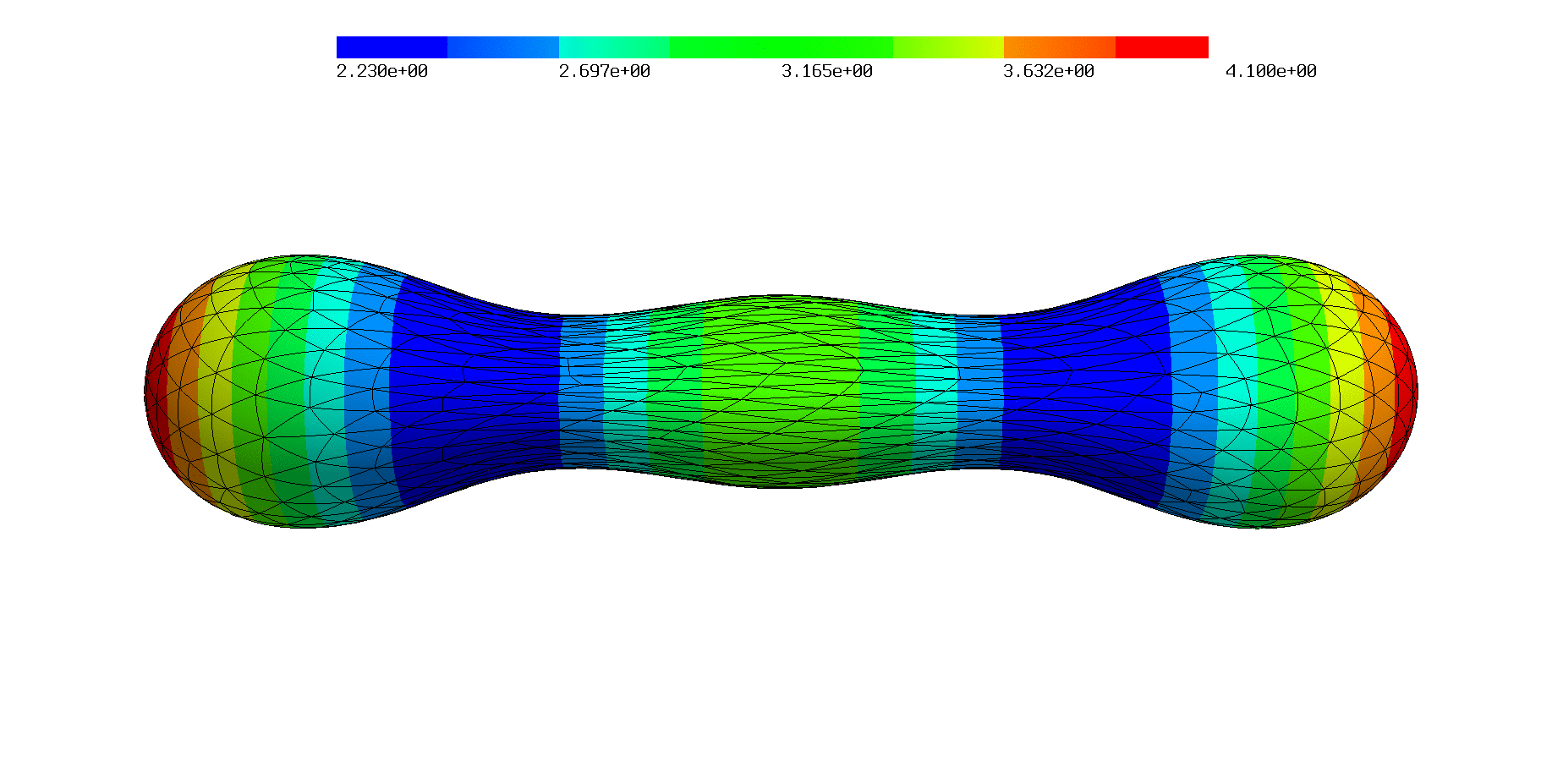}\\
		$\bar{\nu}=0.644$ & $\bar{\nu}=0.615$ & $\bar{\nu}=0.564$\\
		\includegraphics[width=0.33\textwidth]{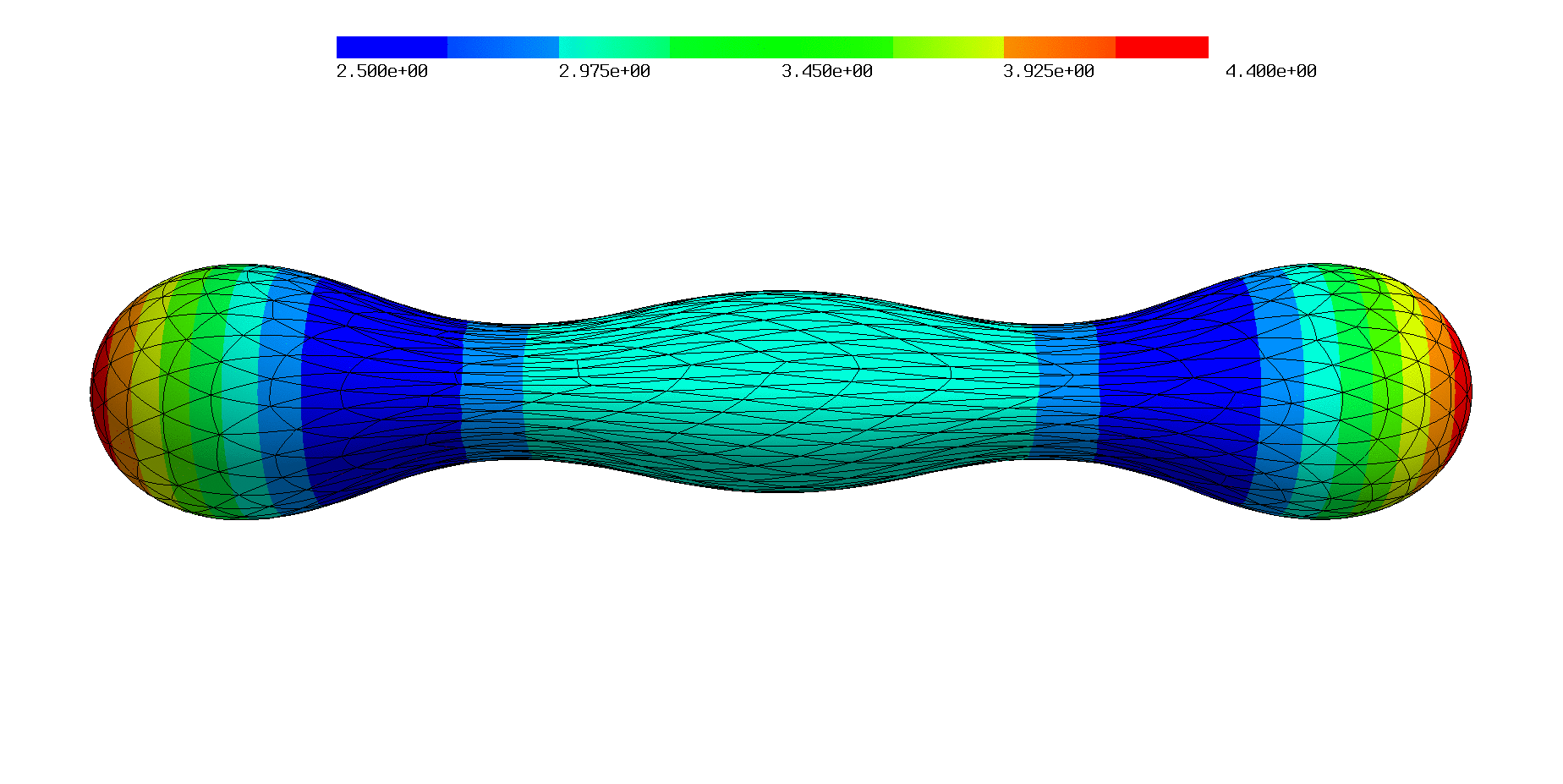}&
		\includegraphics[width=0.33\textwidth]{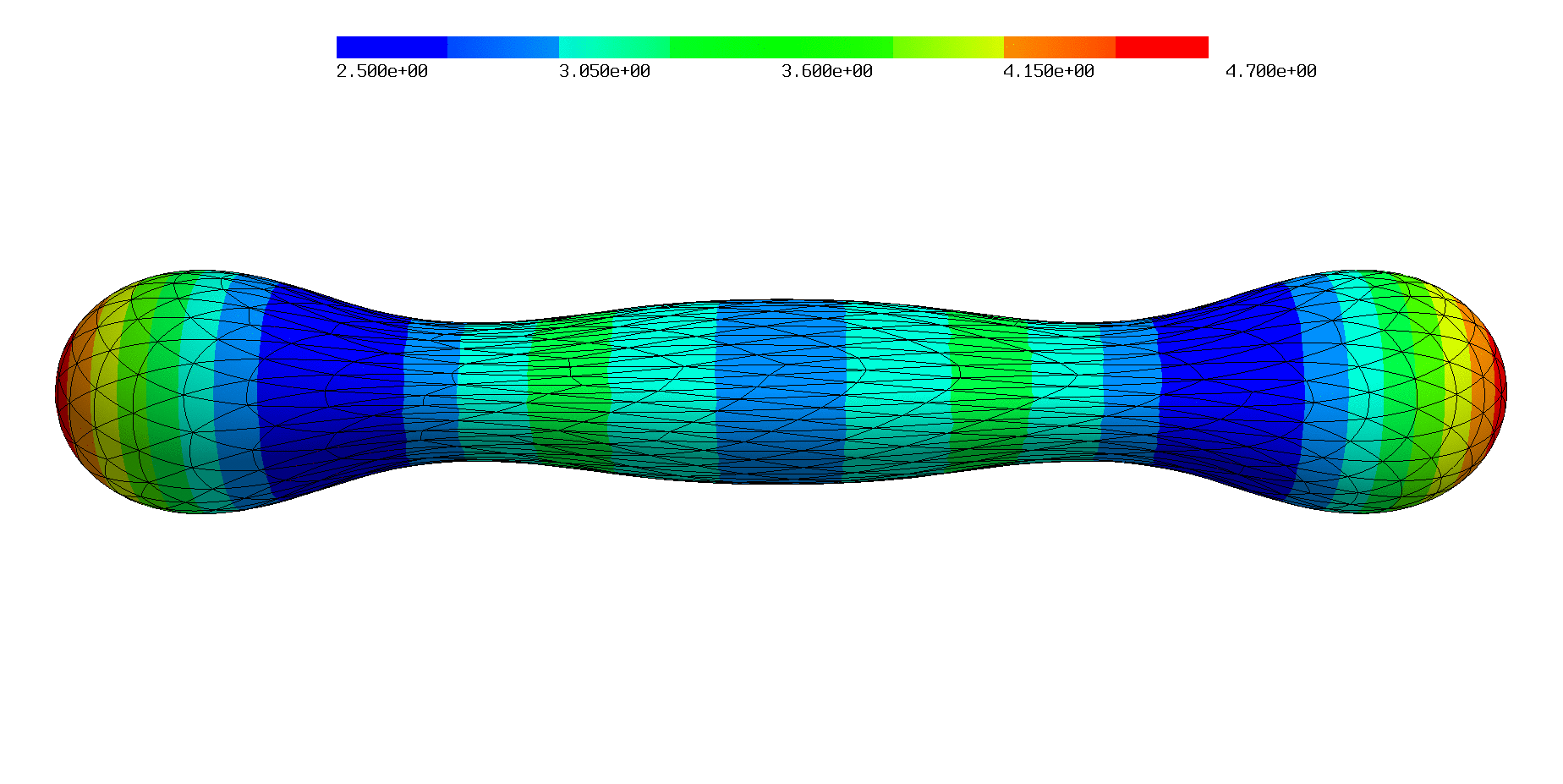}&
		\includegraphics[width=0.33\textwidth]{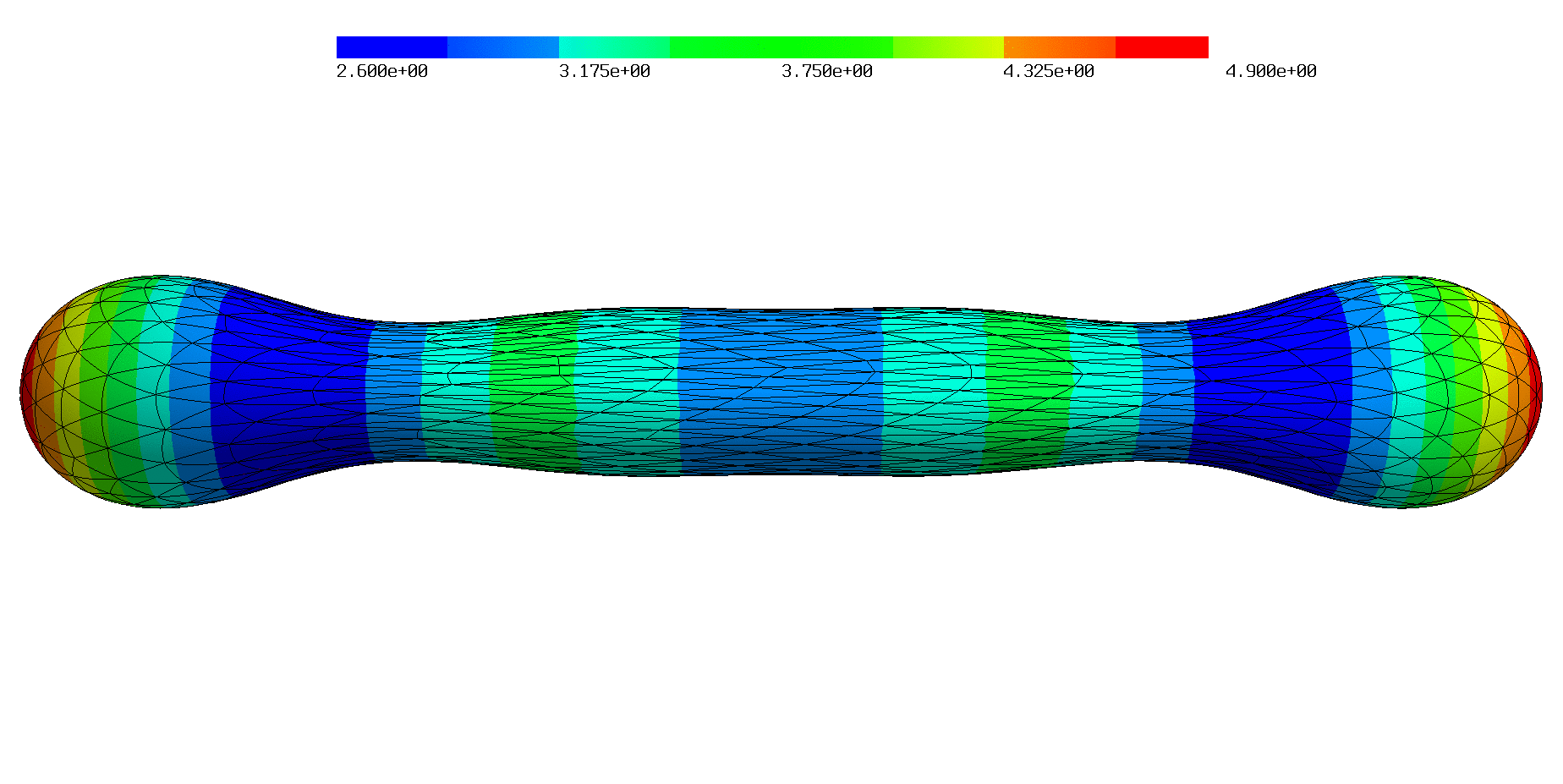}\\
		$\bar{\nu}=0.527$ & $\bar{\nu}=0.507$ & $\bar{\nu}=0.486$
	\end{tabular}
	\caption{Prolate shapes for different reduced volumes $\bar{\nu}$ with $H_0=1.2$ and polynomial order $k=2$.}
	\label{fig:prolate_shapes_12}
\end{figure}

\begin{figure}[h]
	\begin{tabular}{ccc}
		\includegraphics[width=0.33\textwidth]{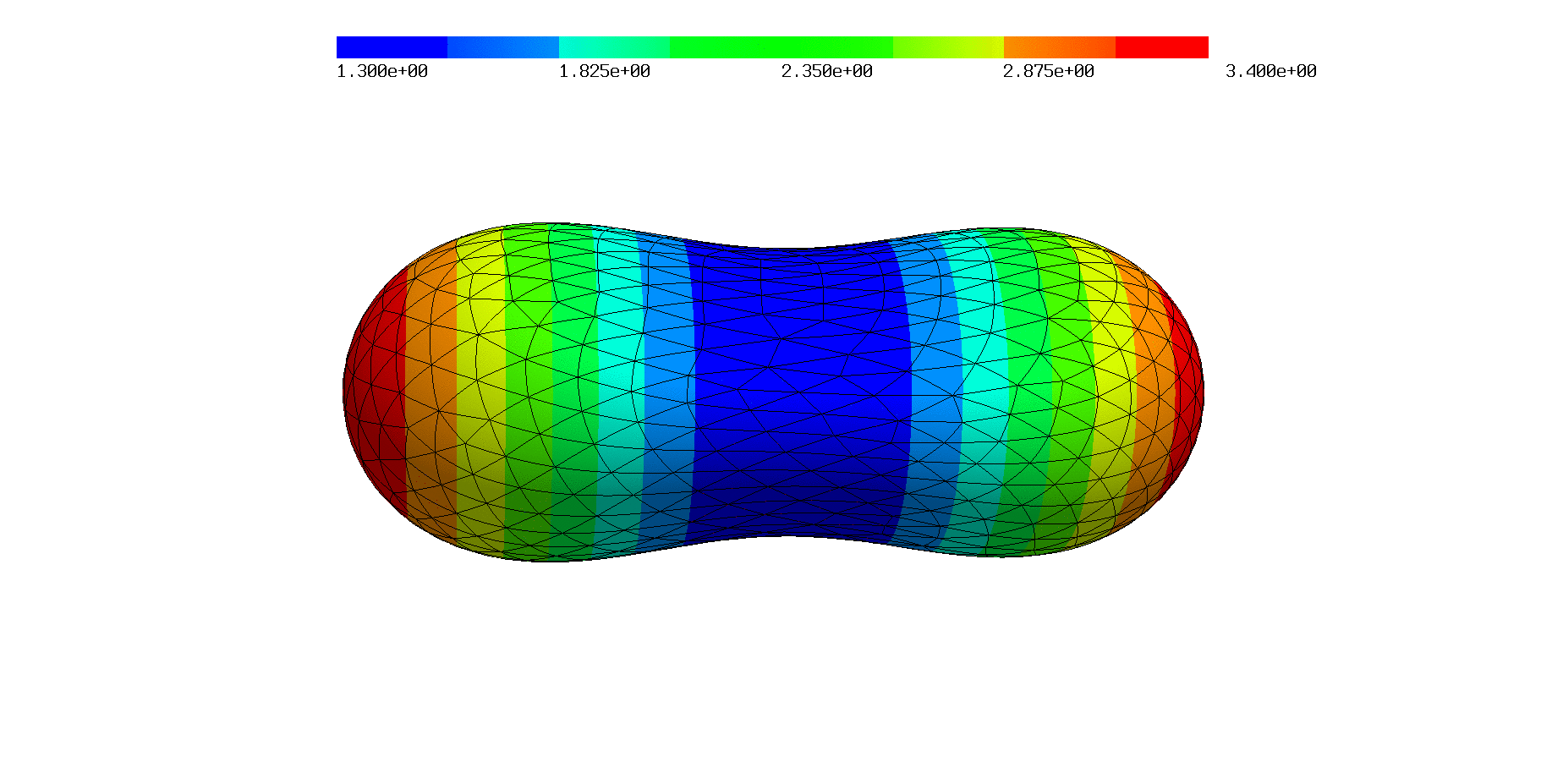}&
		\includegraphics[width=0.33\textwidth]{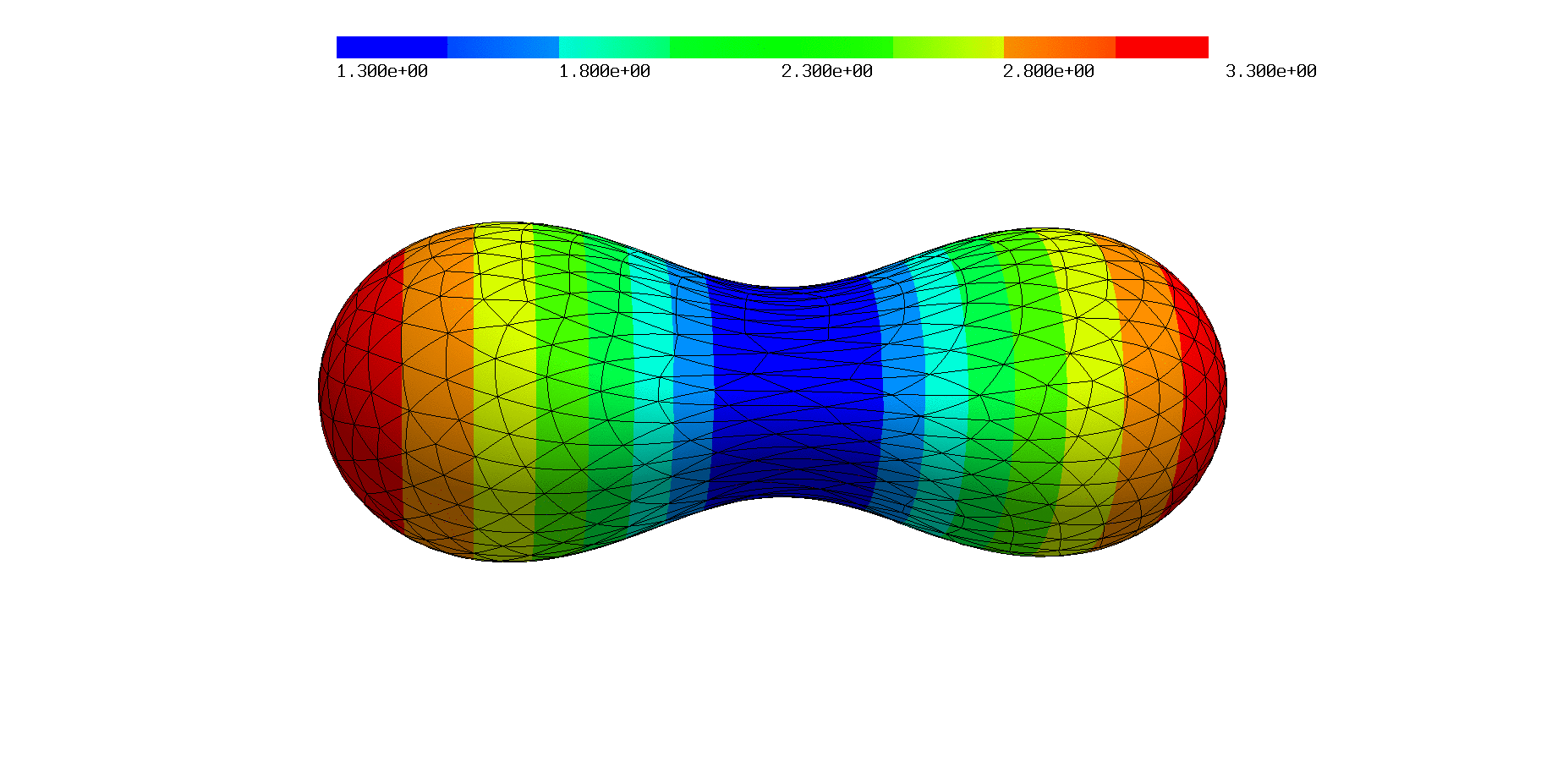}&
		\includegraphics[width=0.33\textwidth]{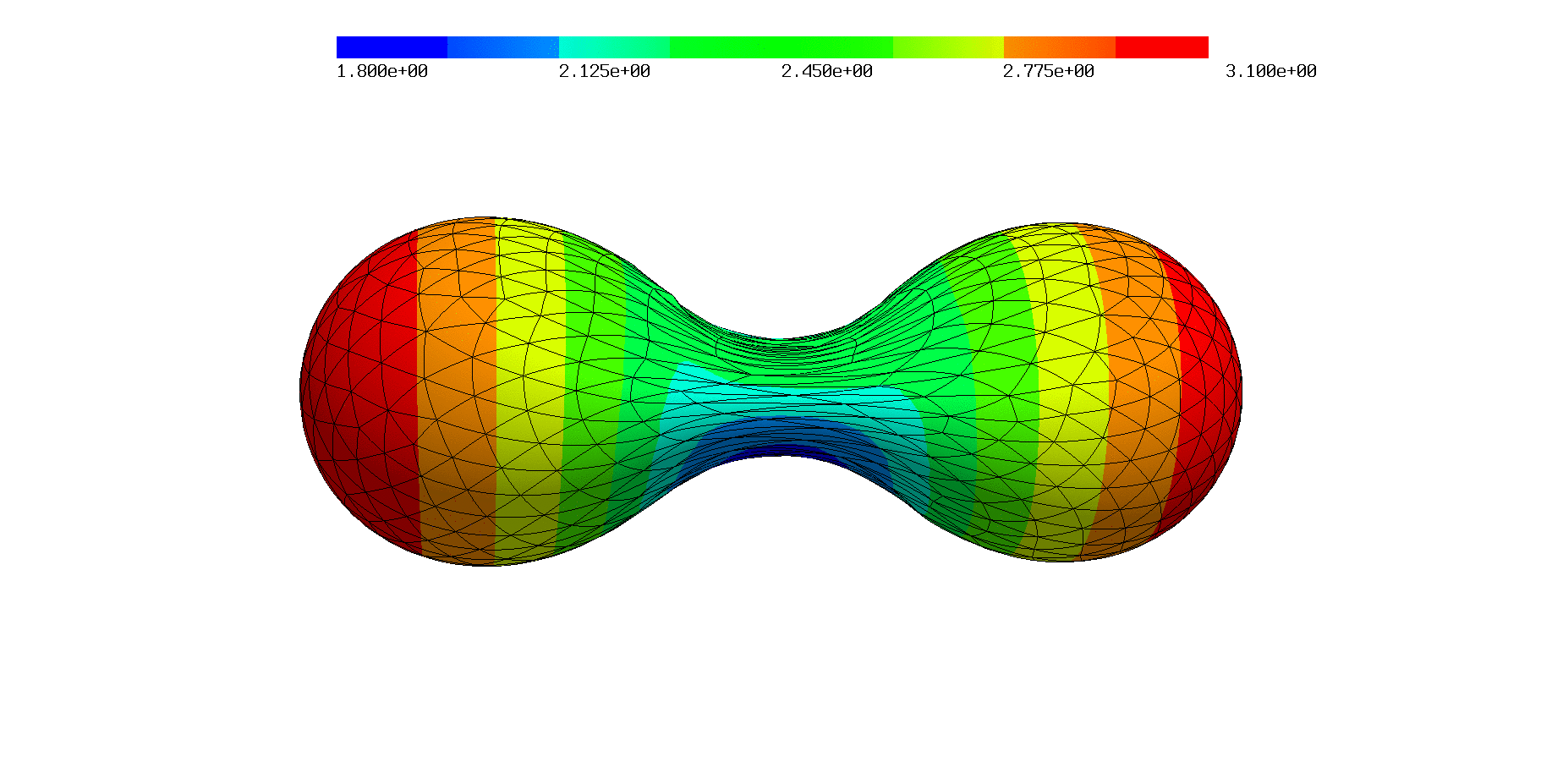}\\
		$\bar{\nu}=0.791$ & $\bar{\nu}=0.729$ & $\bar{\nu}=0.684$\\
		\includegraphics[width=0.33\textwidth]{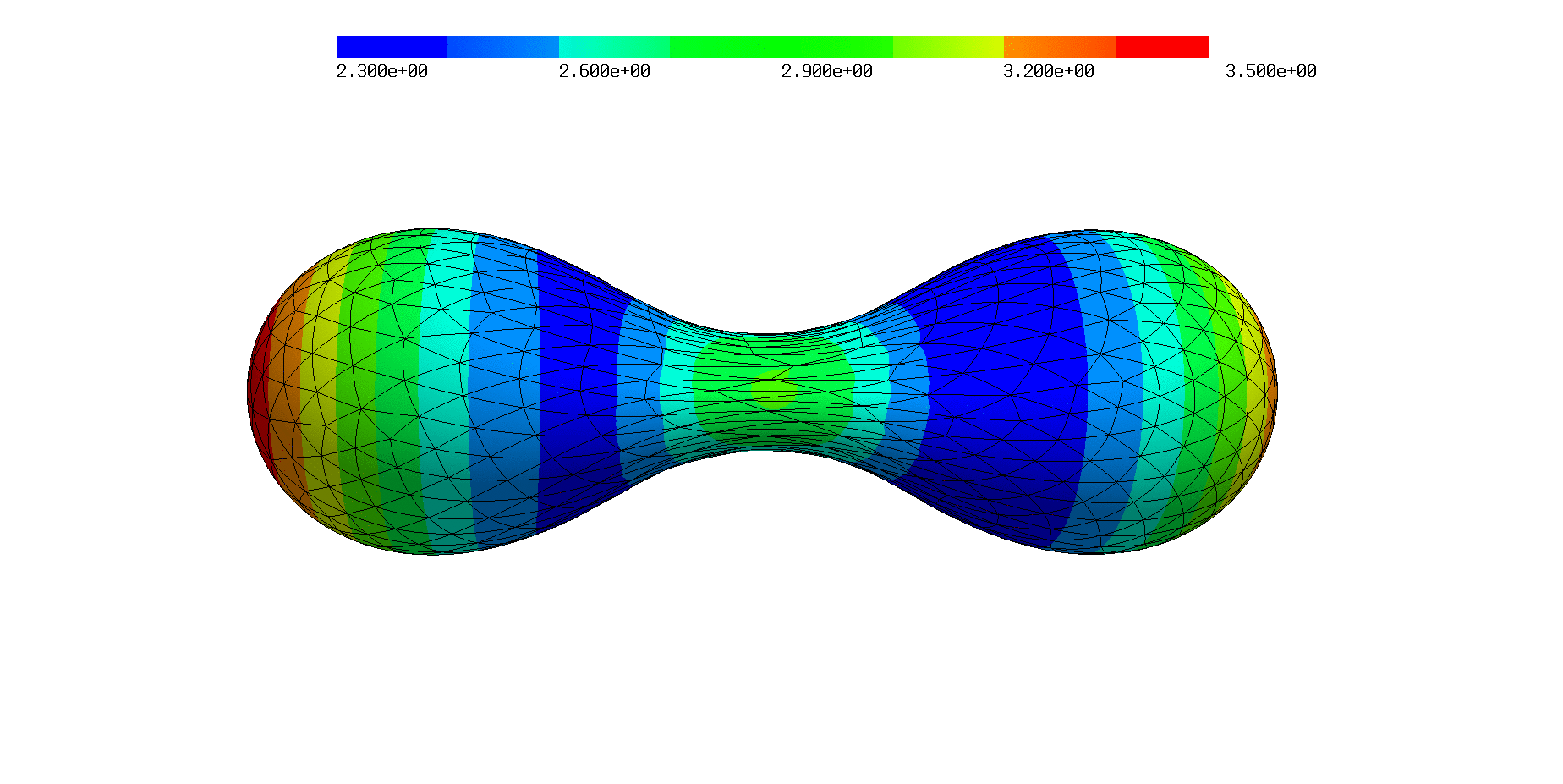}&
		\includegraphics[width=0.33\textwidth]{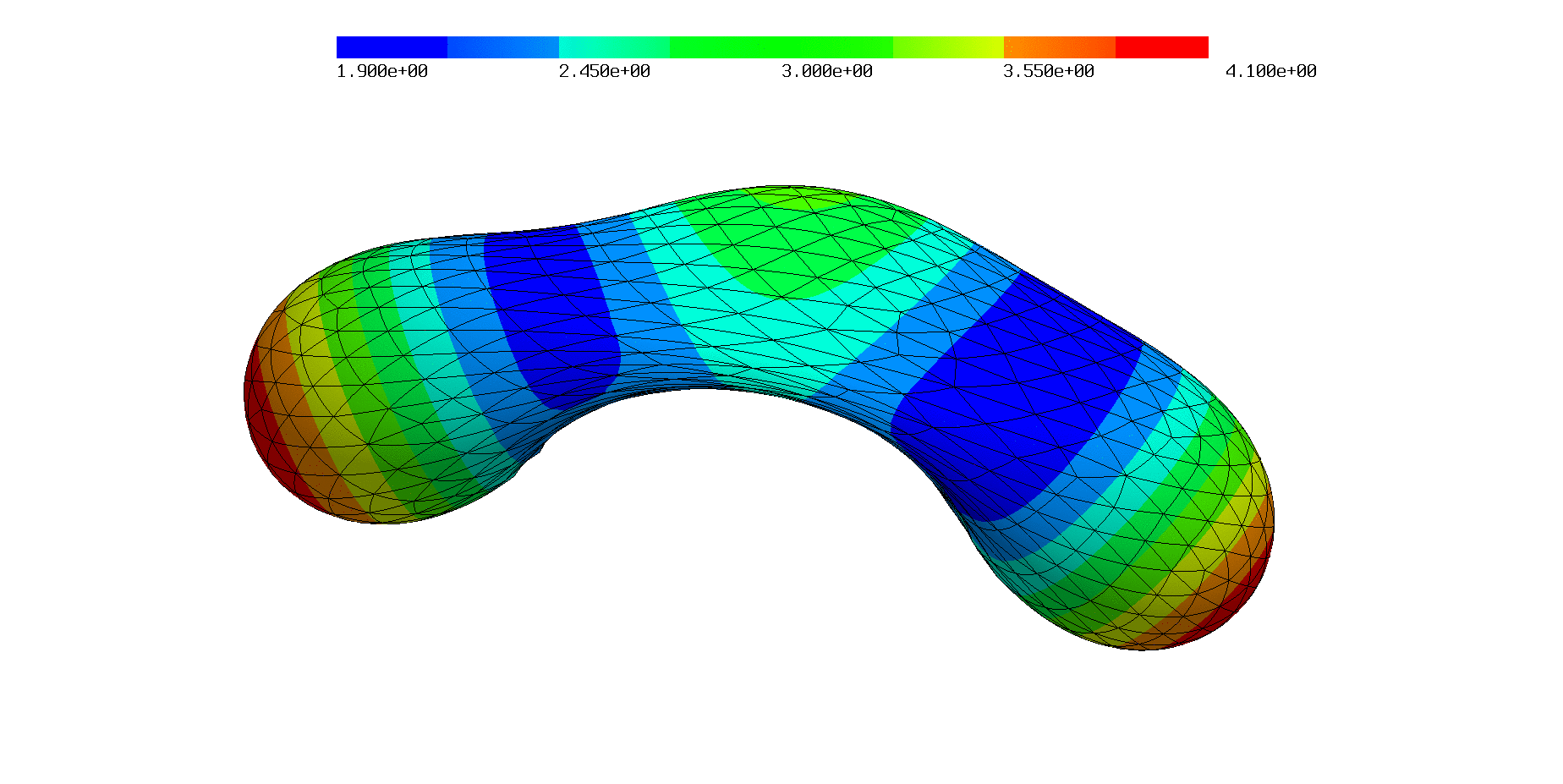}&
		\includegraphics[width=0.33\textwidth]{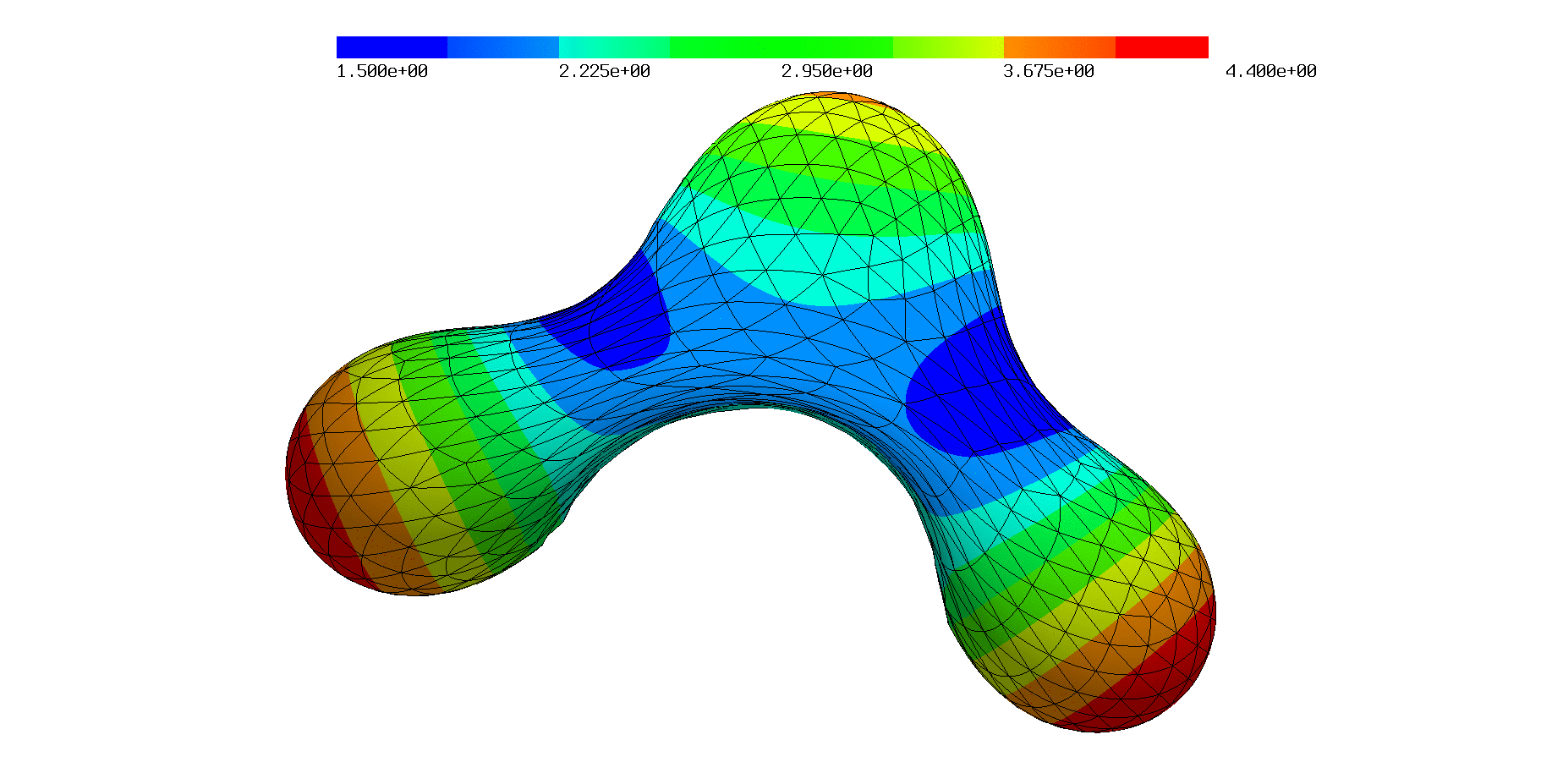}\\
		$\bar{\nu}=0.659$ & $\bar{\nu}=0.627$ & $\bar{\nu}=0.594$\\
		\includegraphics[width=0.33\textwidth]{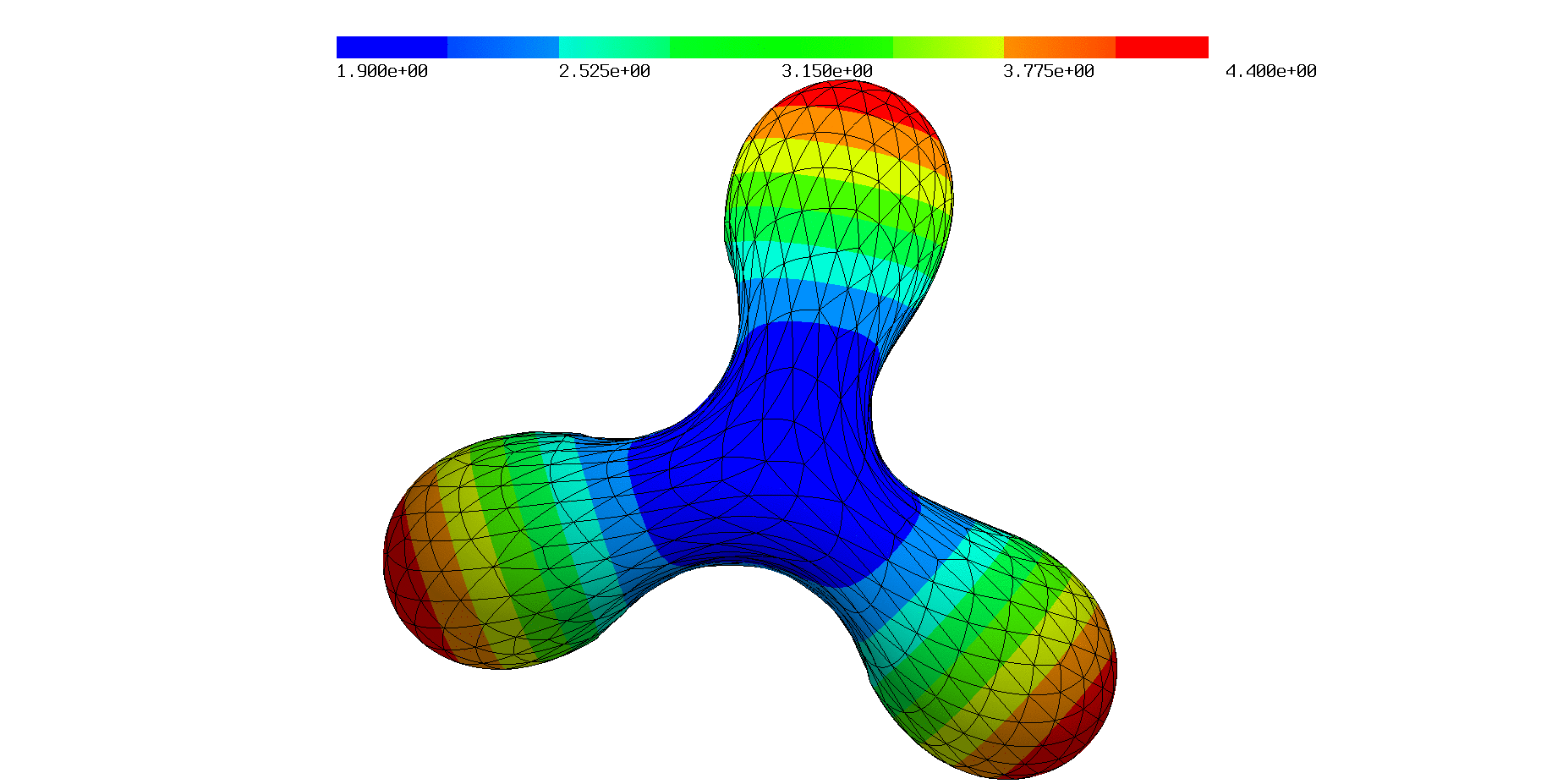}&
		\includegraphics[width=0.33\textwidth]{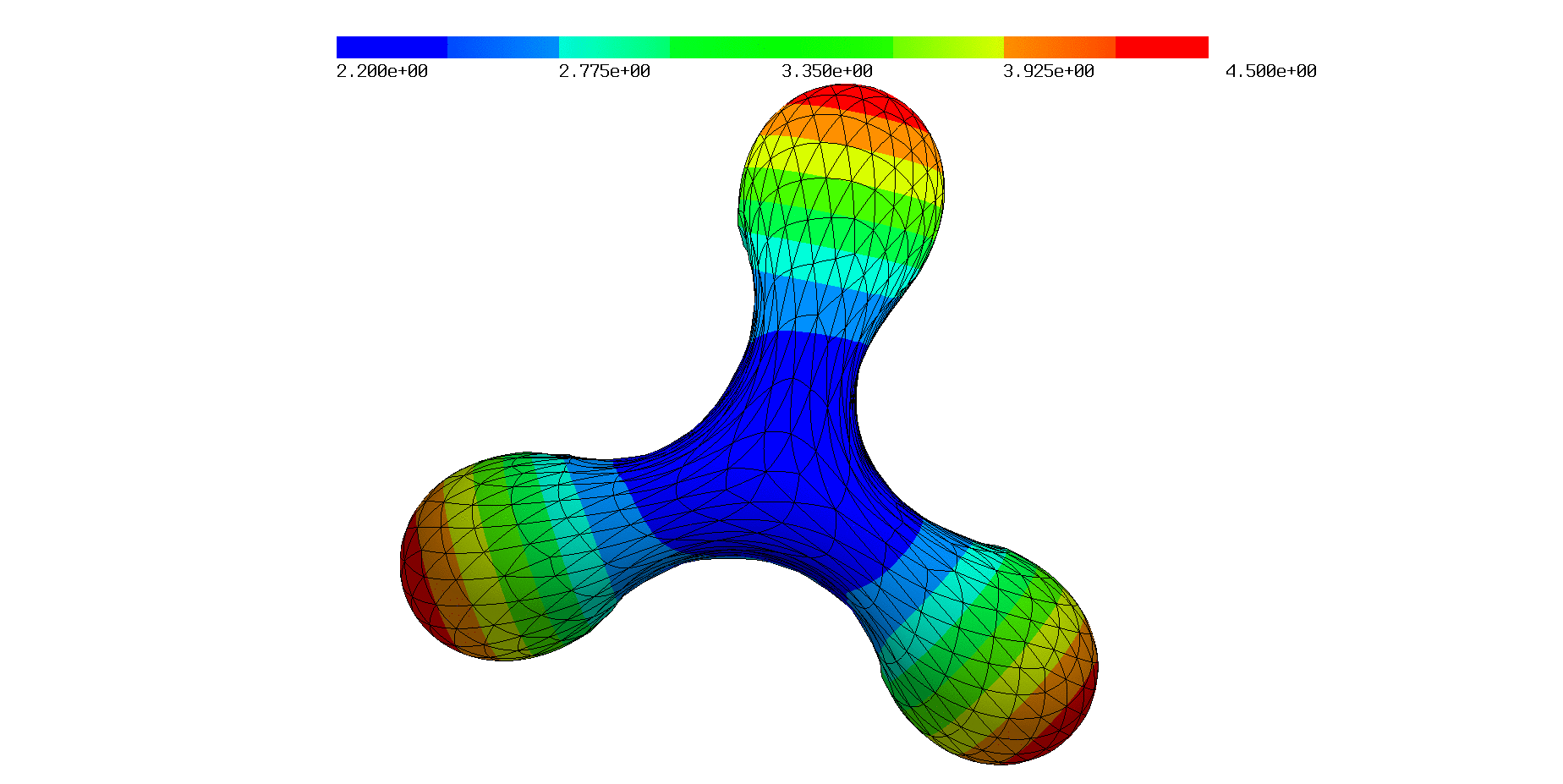}&
		\includegraphics[width=0.33\textwidth]{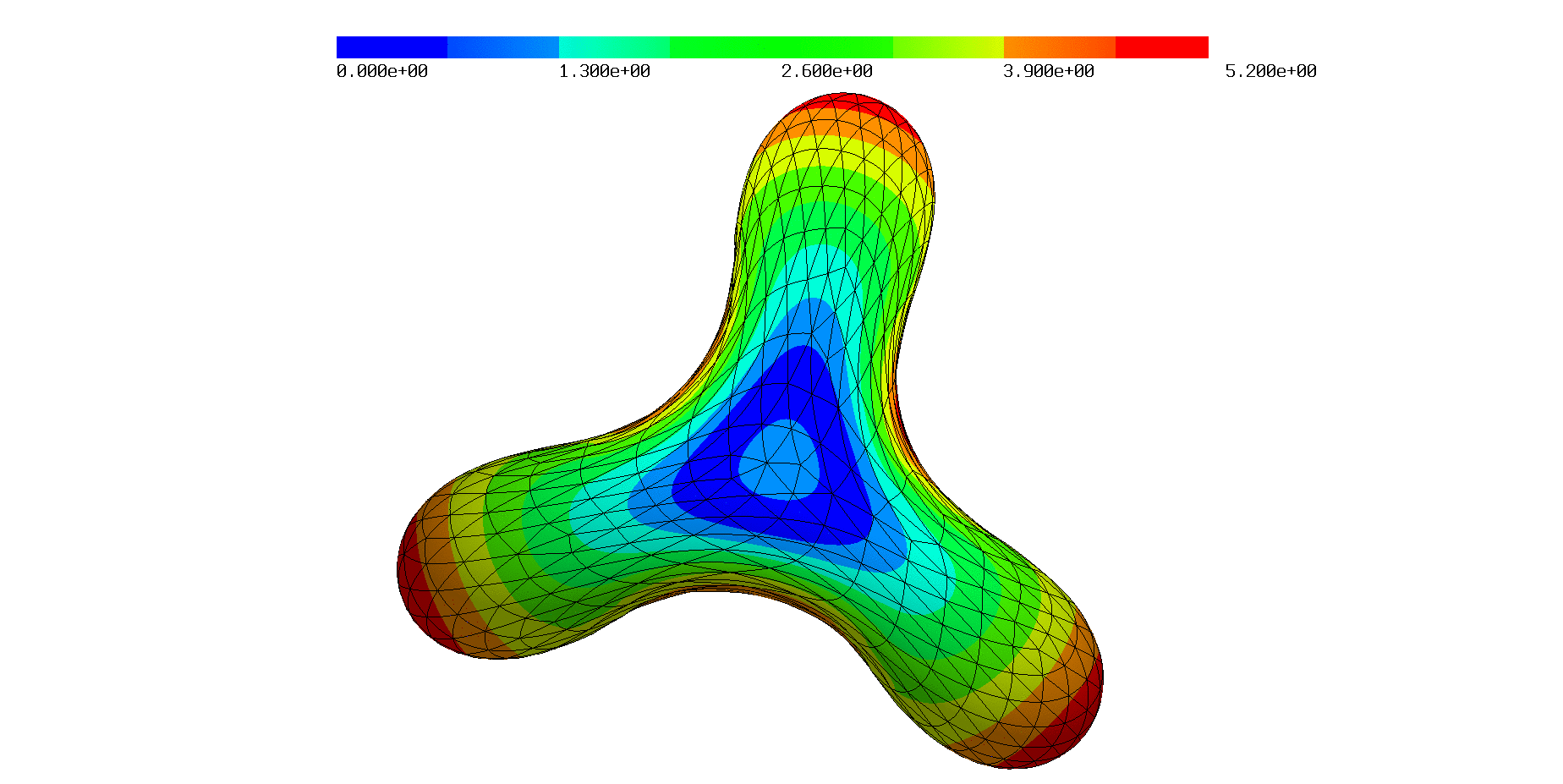}\\
		$\bar{\nu}=0.55$ & $\bar{\nu}=0.508$ & $\bar{\nu}=0.506$\\
	\end{tabular}
	\caption{Oblate shapes for different reduced volumes $\bar{\nu}$ with $H_0=1.2$ and polynomial order $k=2$.}
	\label{fig:oblate_shapes_12}
\end{figure}

The results shown in Figure~\ref{fig:result_spont_p2} (right) for $H_0=1.5$ also render the phase diagram, however, this configuration is more challenging than the previous one. On the one hand there are two bifurcation points at around $\bar{\nu}=0.7$ and $\bar{\nu}=0.58$ and on the other hand the meshes get even more deformed and especially narrowed at the middle. We conclude that remeshing techniques are essentially needed to properly resolve and converge at the bifurcation points. Despite this fact, we observe good agreement with the phase diagram from \cite{SBL91} and obtain the corresponding characteristic solutions comparable to the results in \cite{a_BILIKO_2020a}, see Figure~\ref{fig:prolate_shapes_15}. These are again all axisymmetric, whereas the oblate initial shapes again converge to dumbbell solutions with three ends as for $H_0=1.2$, compare Figure~\ref{fig:oblate_shapes_15}. As before the oblate results for $\bar{\nu}<0.65$ did not fully converge, however, definitely indicating that a non axisymmetric oblate branch exists close to them.

\begin{figure}[h]
	\begin{tabular}{ccc}
		\includegraphics[width=0.33\textwidth]{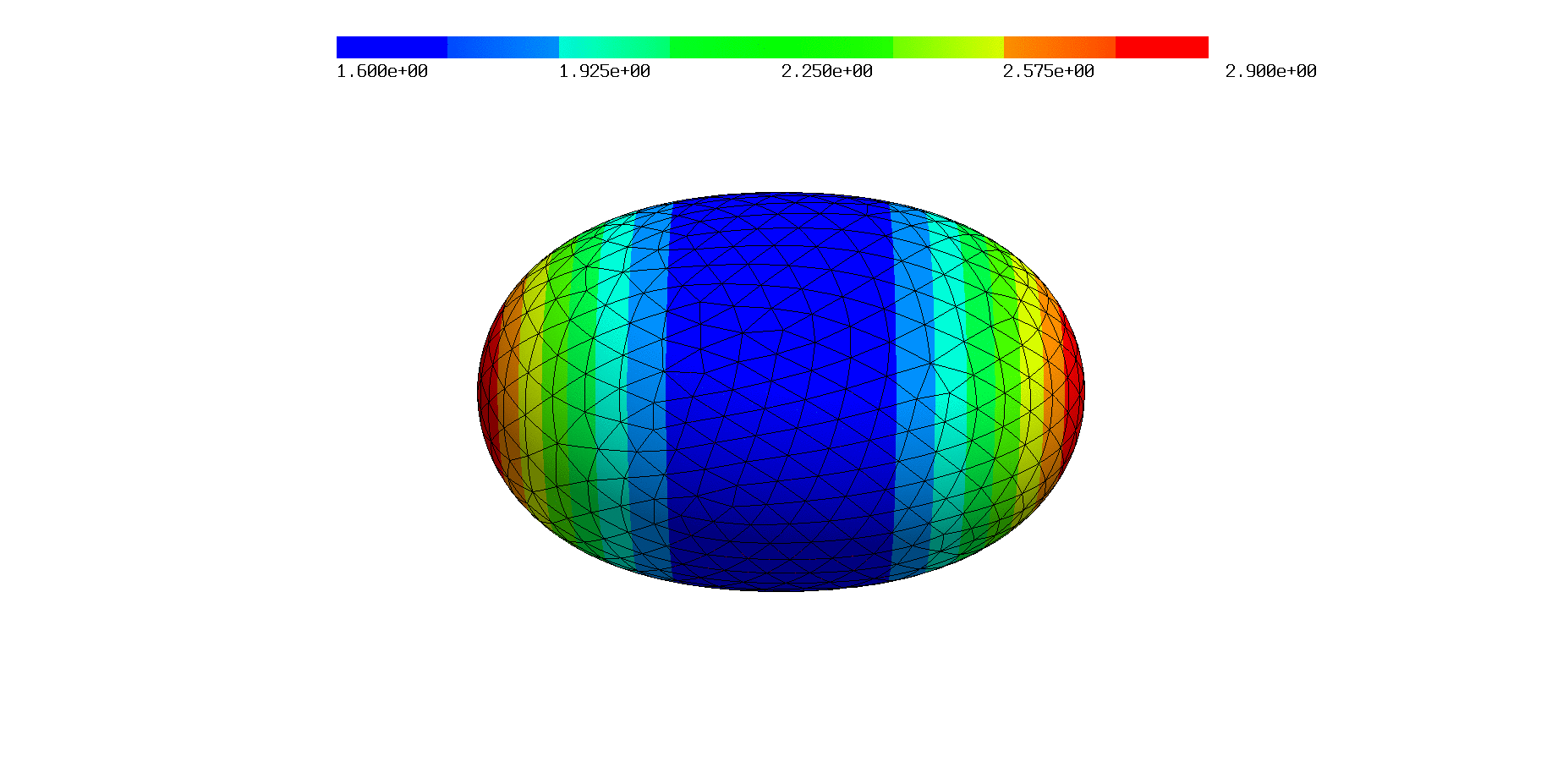}&
		\includegraphics[width=0.33\textwidth]{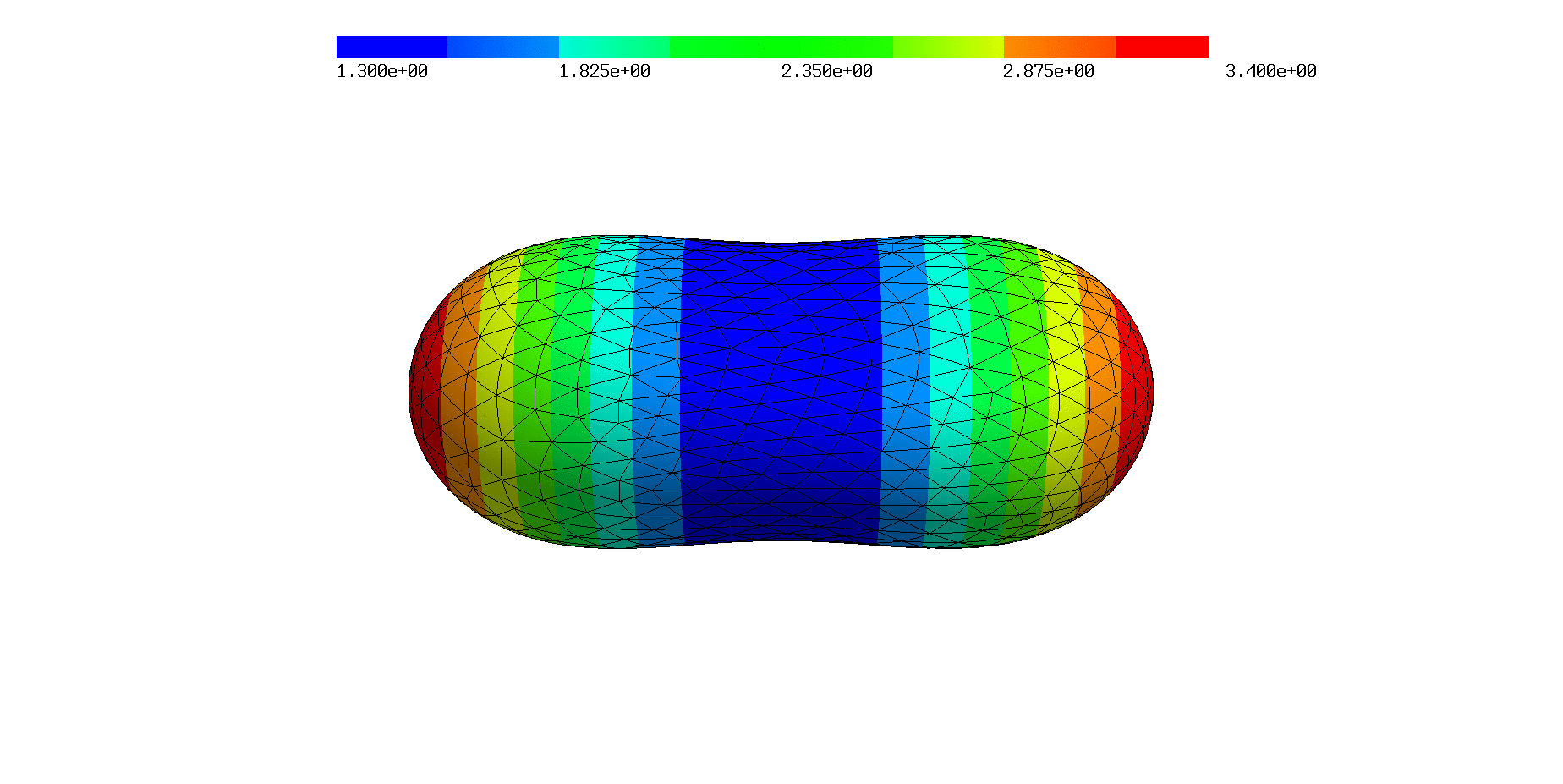}&
		\includegraphics[width=0.33\textwidth]{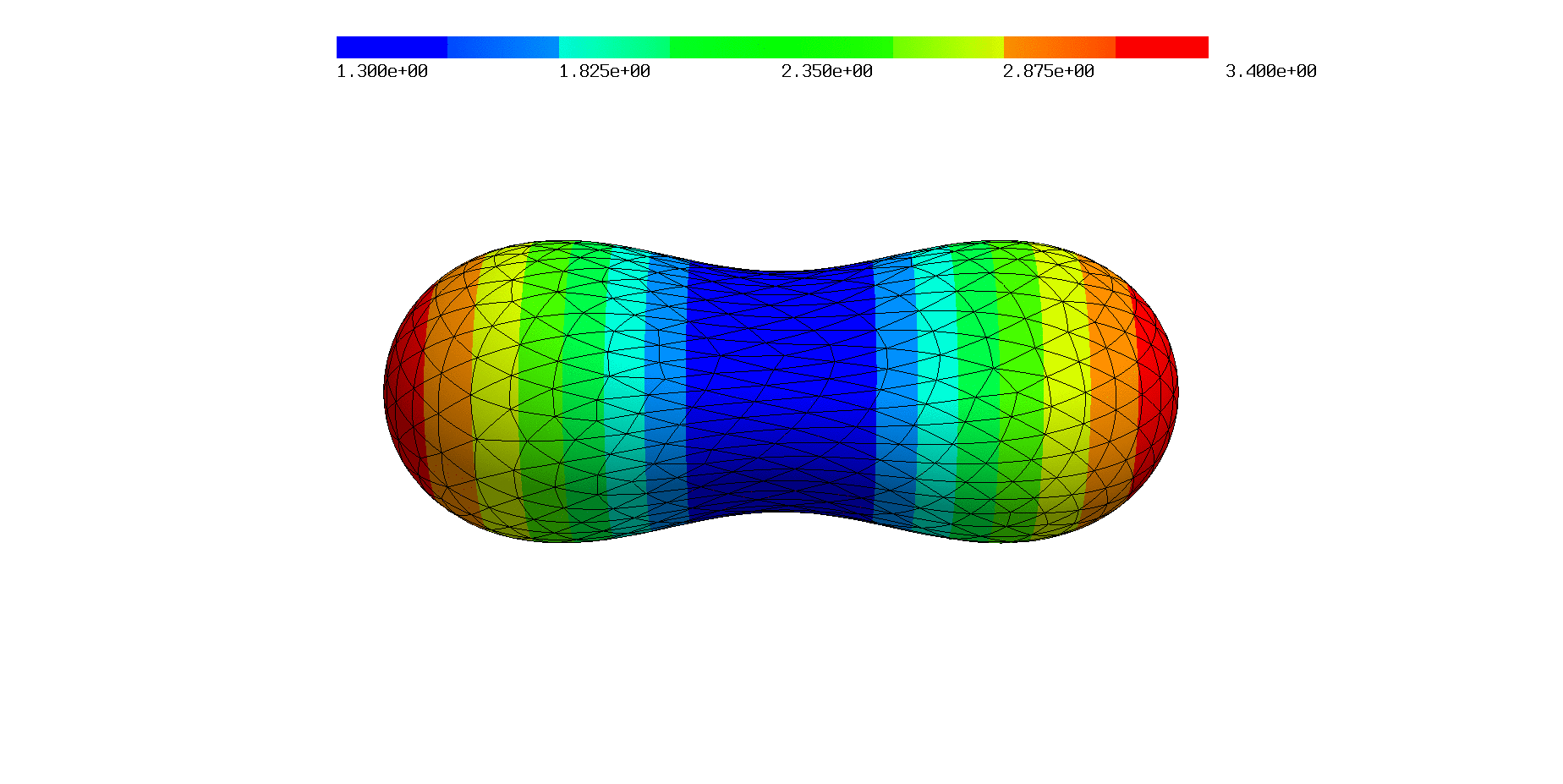}\\
		$\bar{\nu}=0.956$ & $\bar{\nu}=0.835$ & $\bar{\nu}=0.772$\\
		\includegraphics[width=0.33\textwidth]{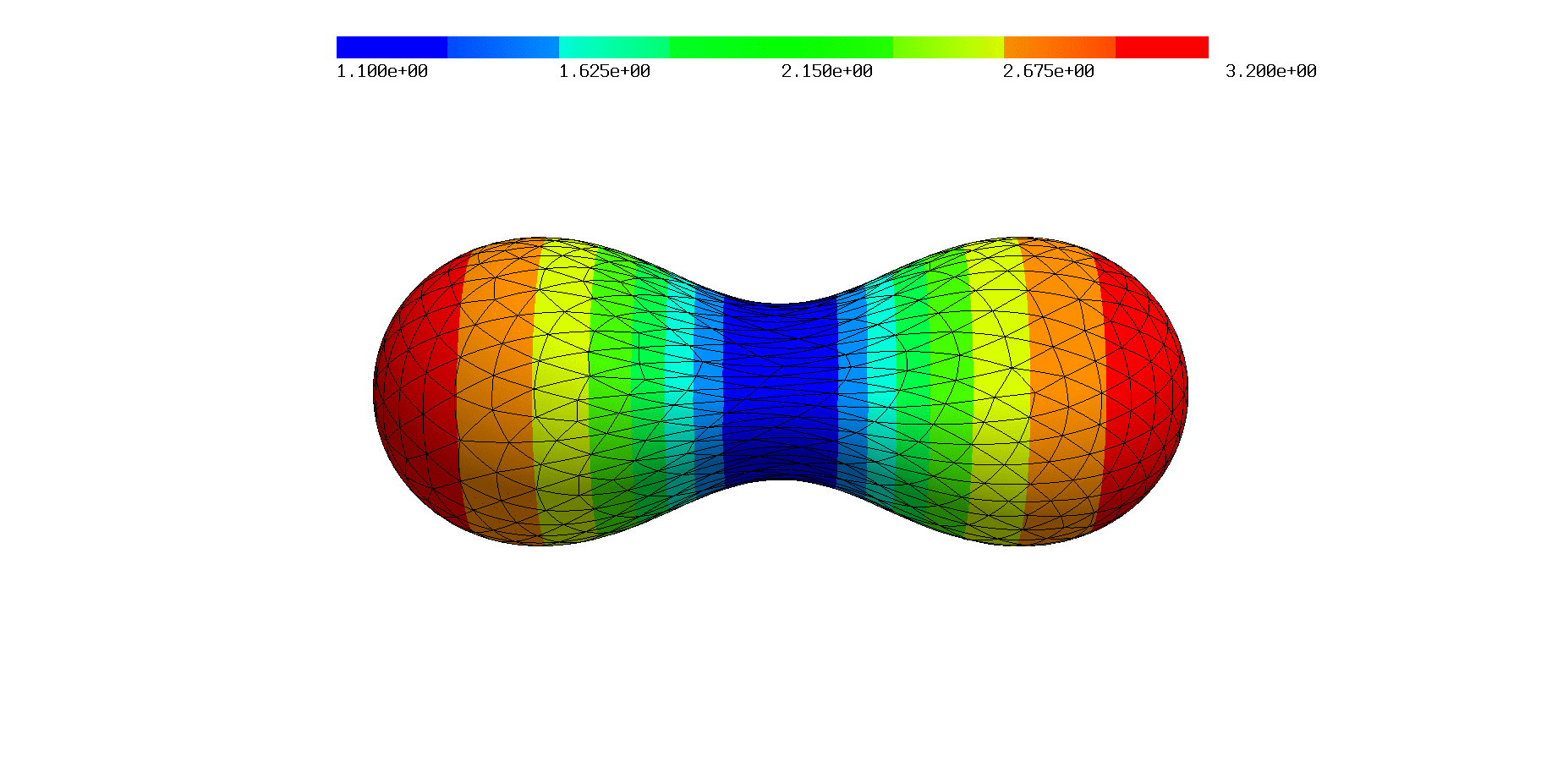}&
		\includegraphics[width=0.33\textwidth]{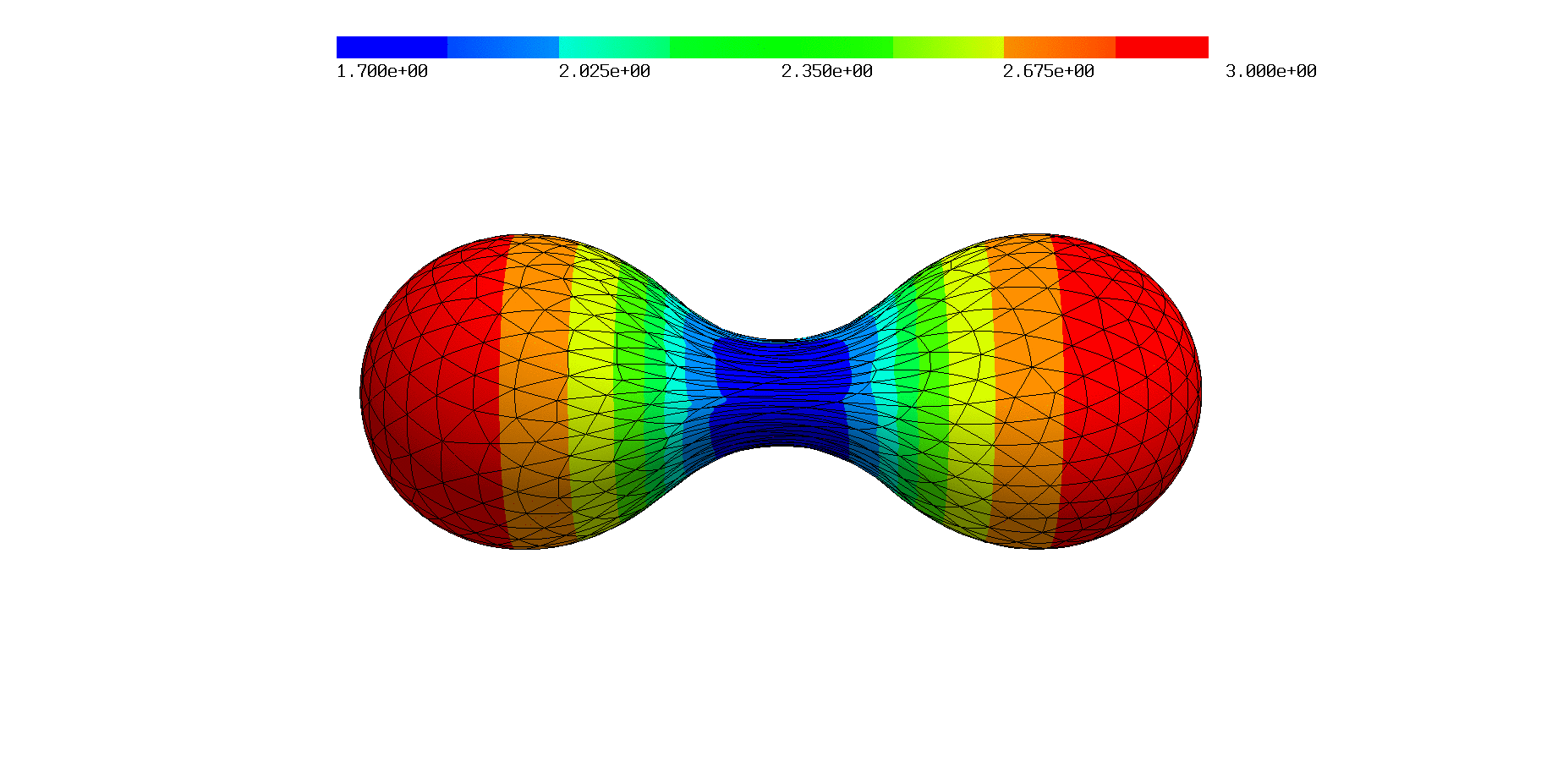}&
		\includegraphics[width=0.33\textwidth]{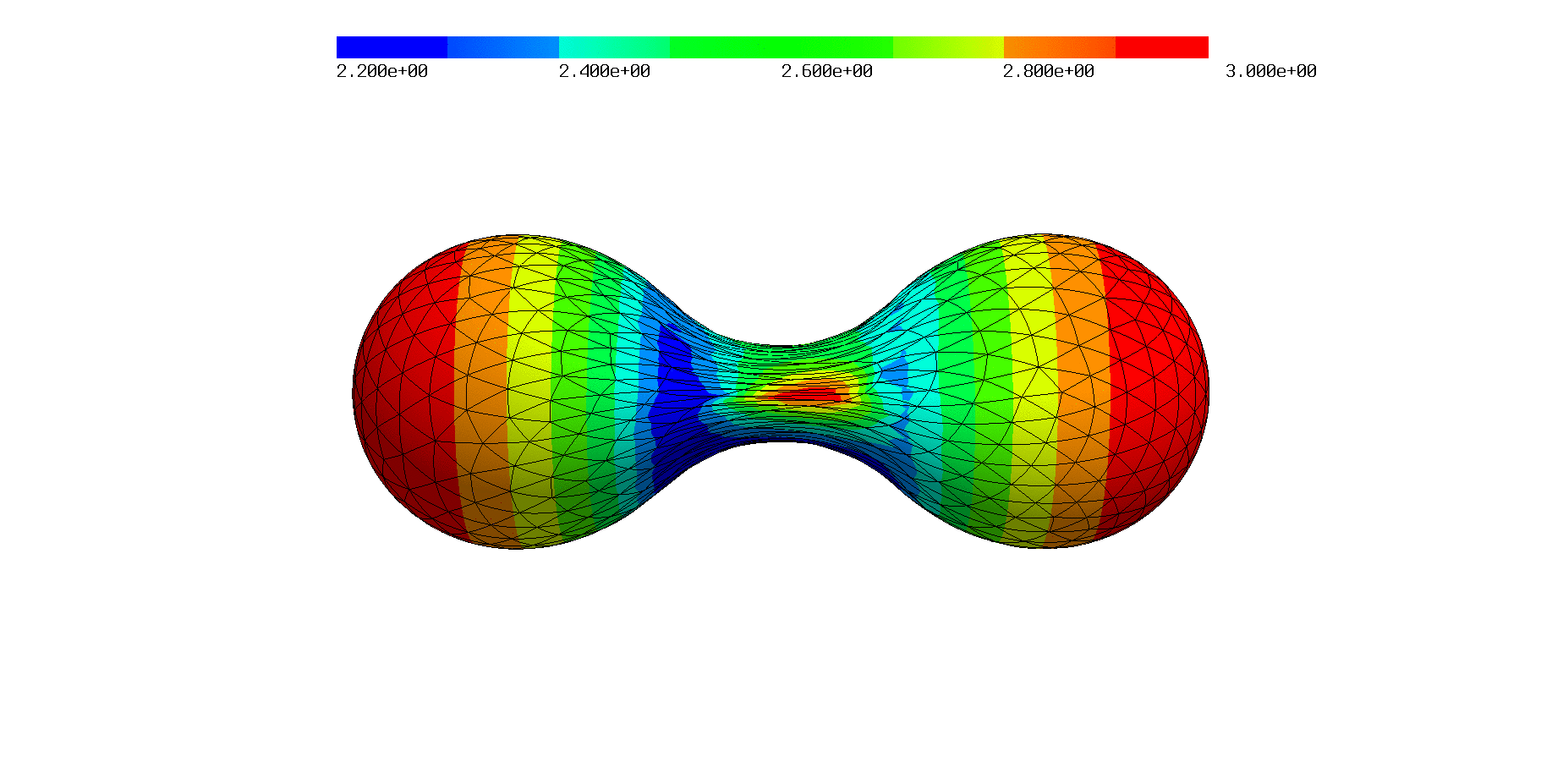}\\
		$\bar{\nu}=0.726$ & $\bar{\nu}=0.687$ & $\bar{\nu}=0.679$\\
		\includegraphics[width=0.33\textwidth]{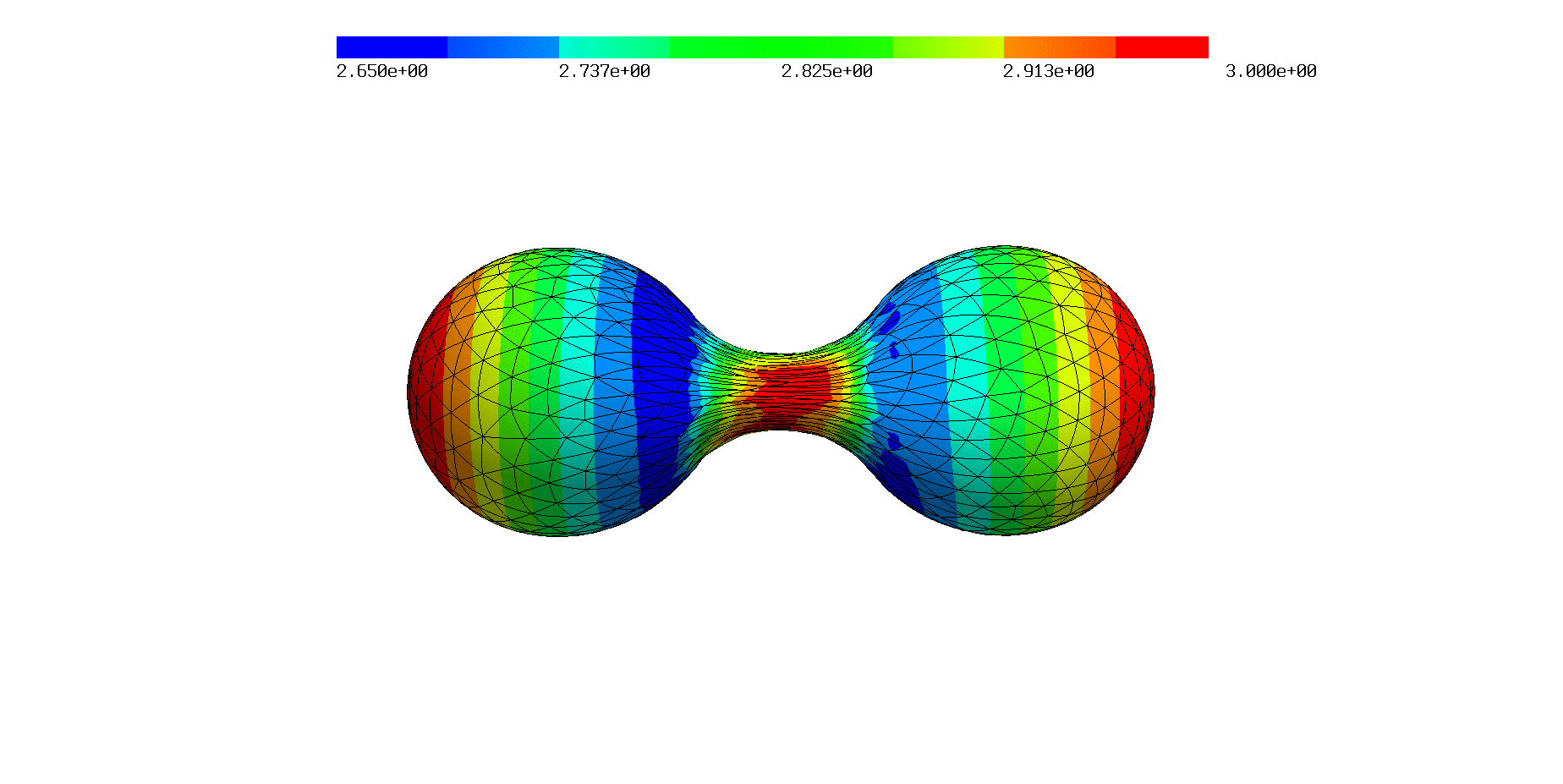}&
		\includegraphics[width=0.33\textwidth]{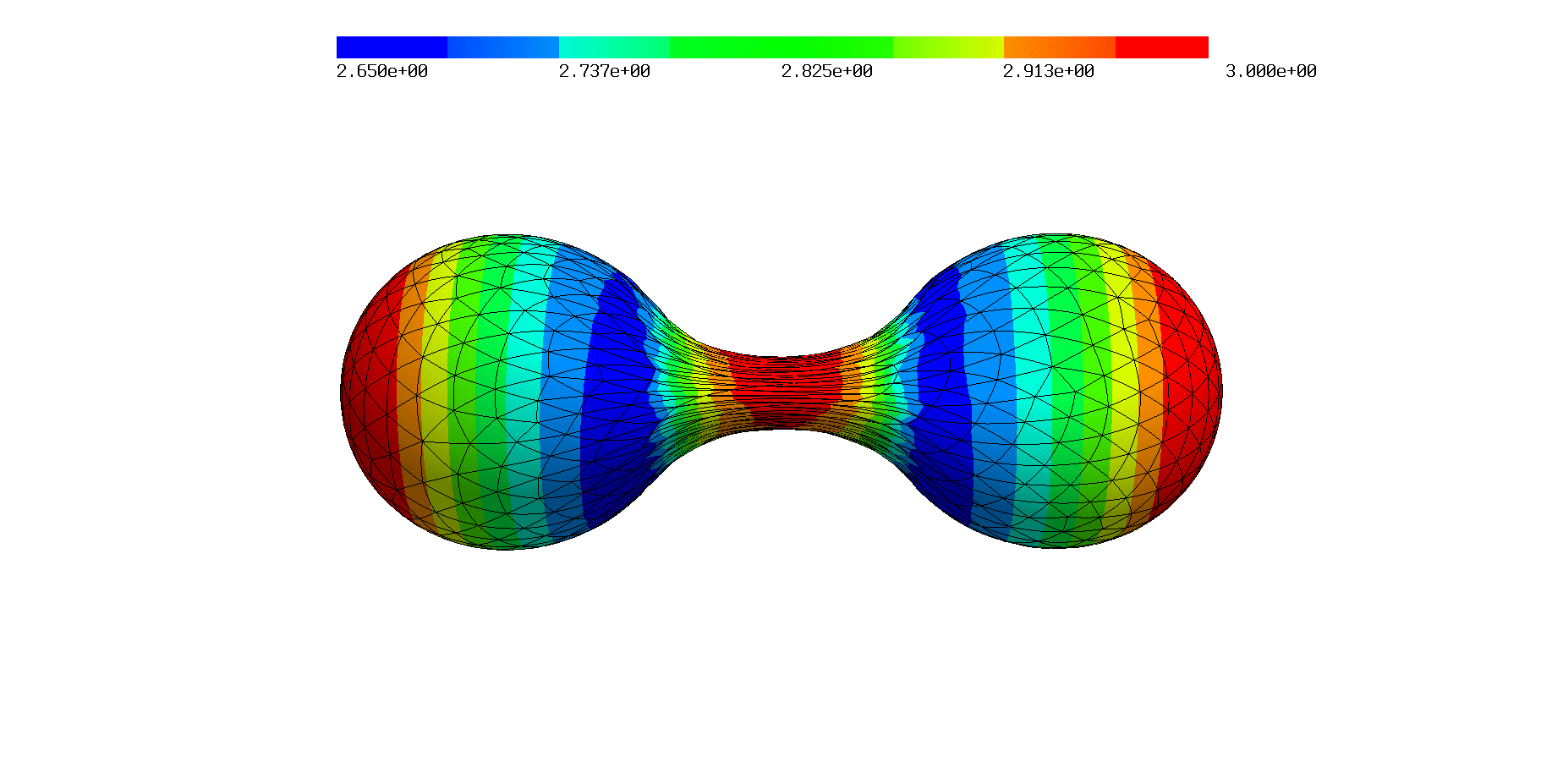}&
		\includegraphics[width=0.33\textwidth]{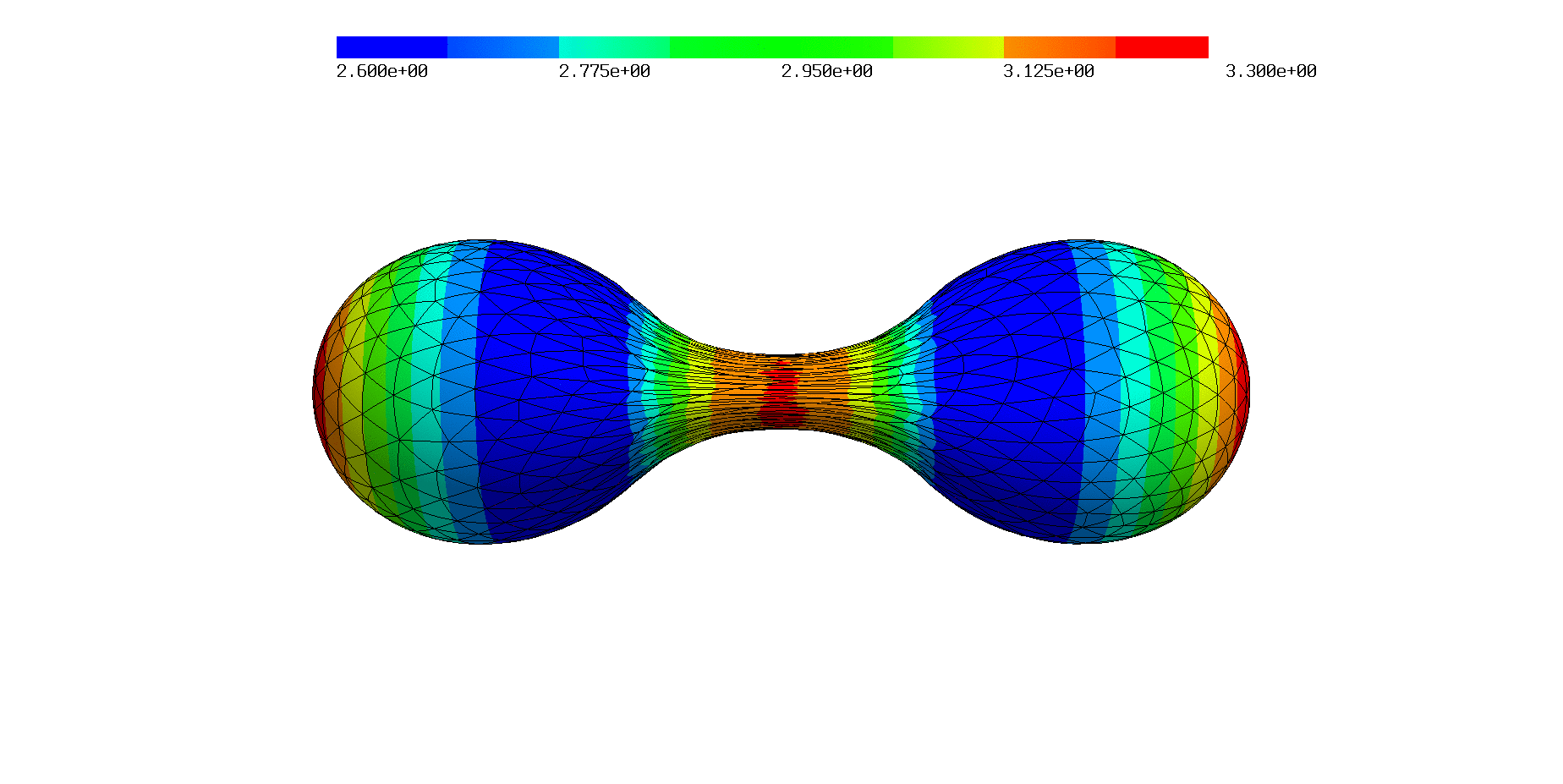}\\
		$\bar{\nu}=0.679$ & $\bar{\nu}=0.665$ & $\bar{\nu}=0.656$\\
		\includegraphics[width=0.33\textwidth]{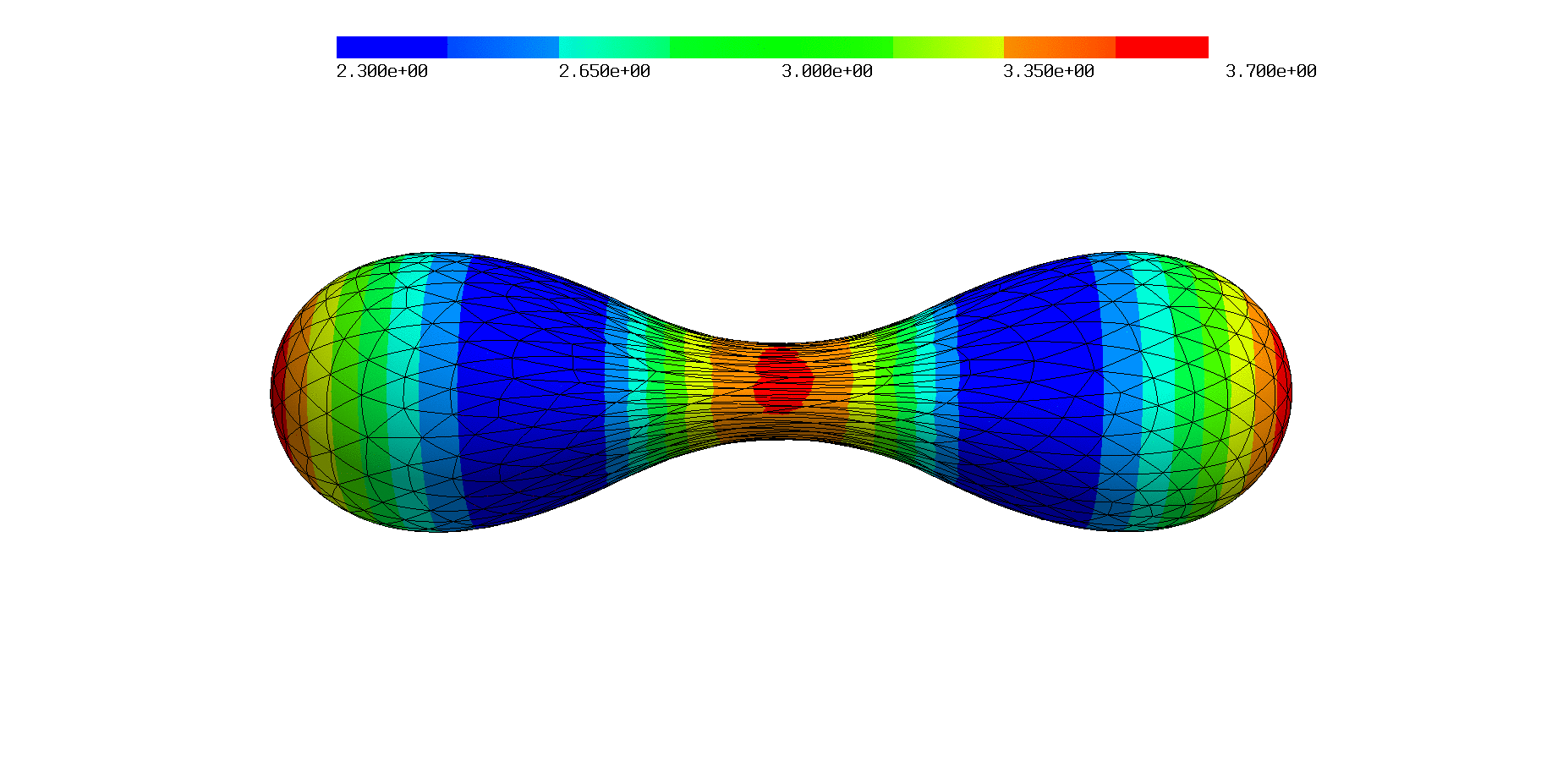}&
		\includegraphics[width=0.33\textwidth]{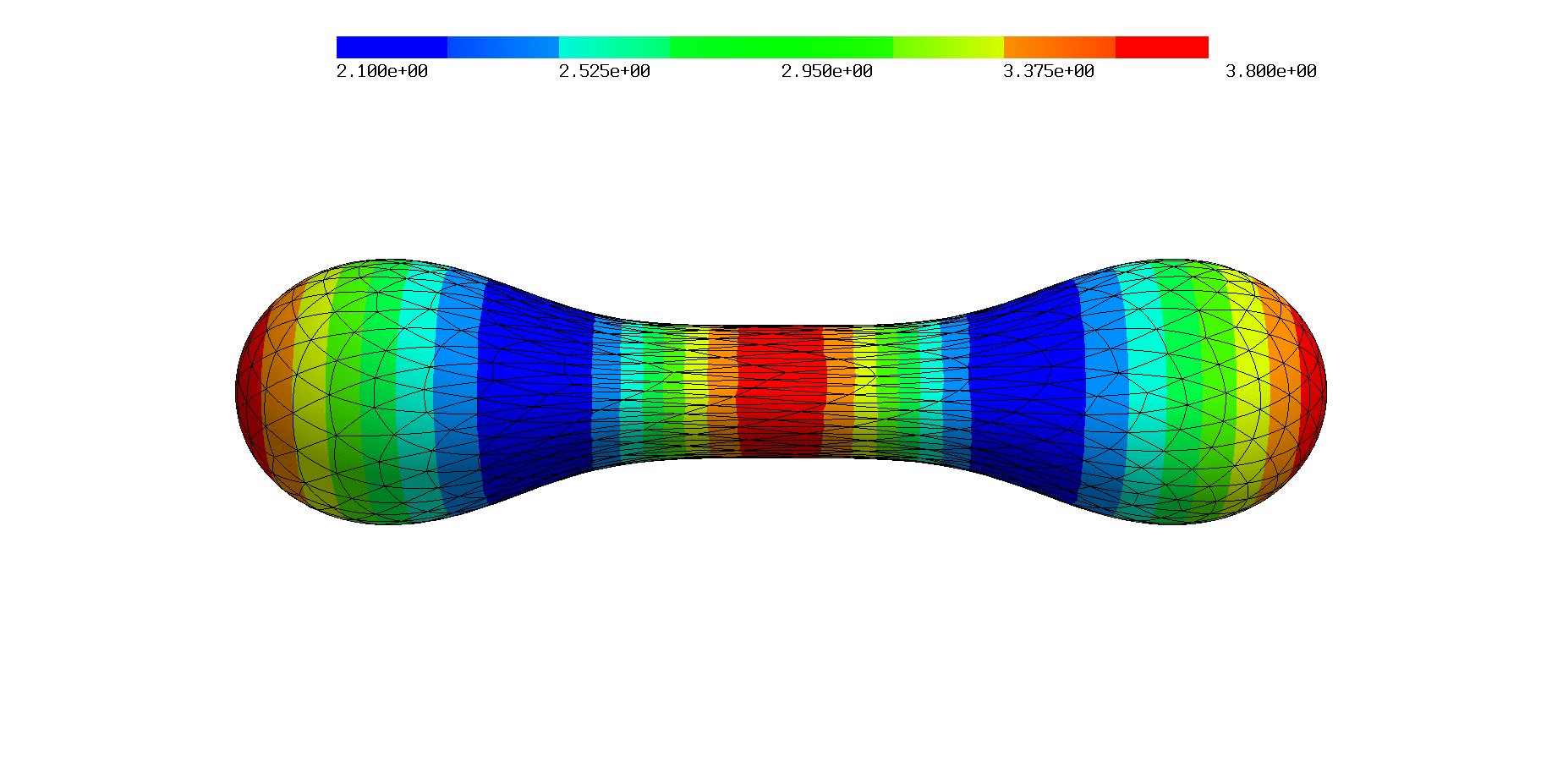}&
		\includegraphics[width=0.33\textwidth]{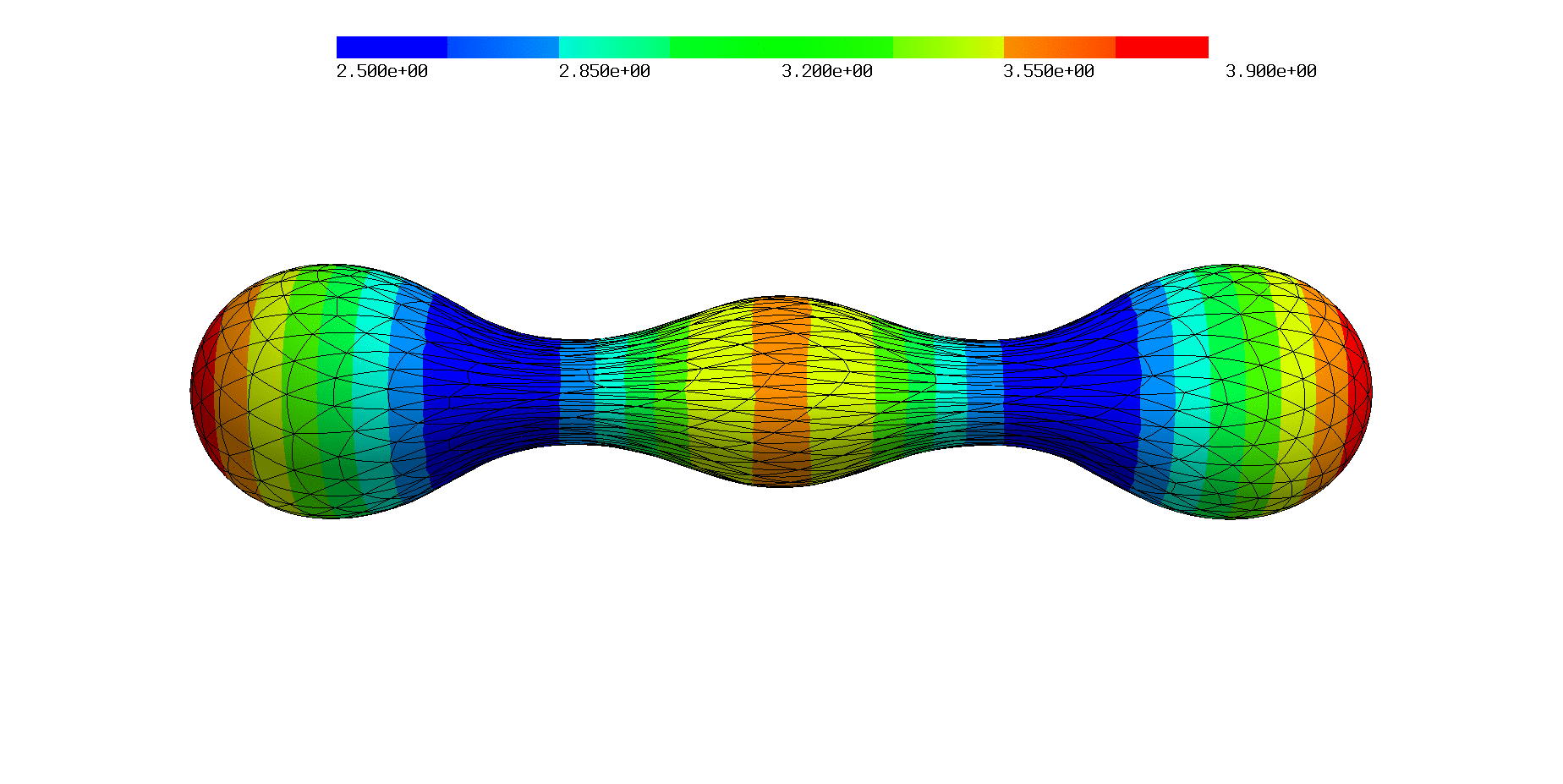}\\
		$\bar{\nu}=0.634$ & $\bar{\nu}=0.596$ & $\bar{\nu}=0.543$\\
		\includegraphics[width=0.33\textwidth]{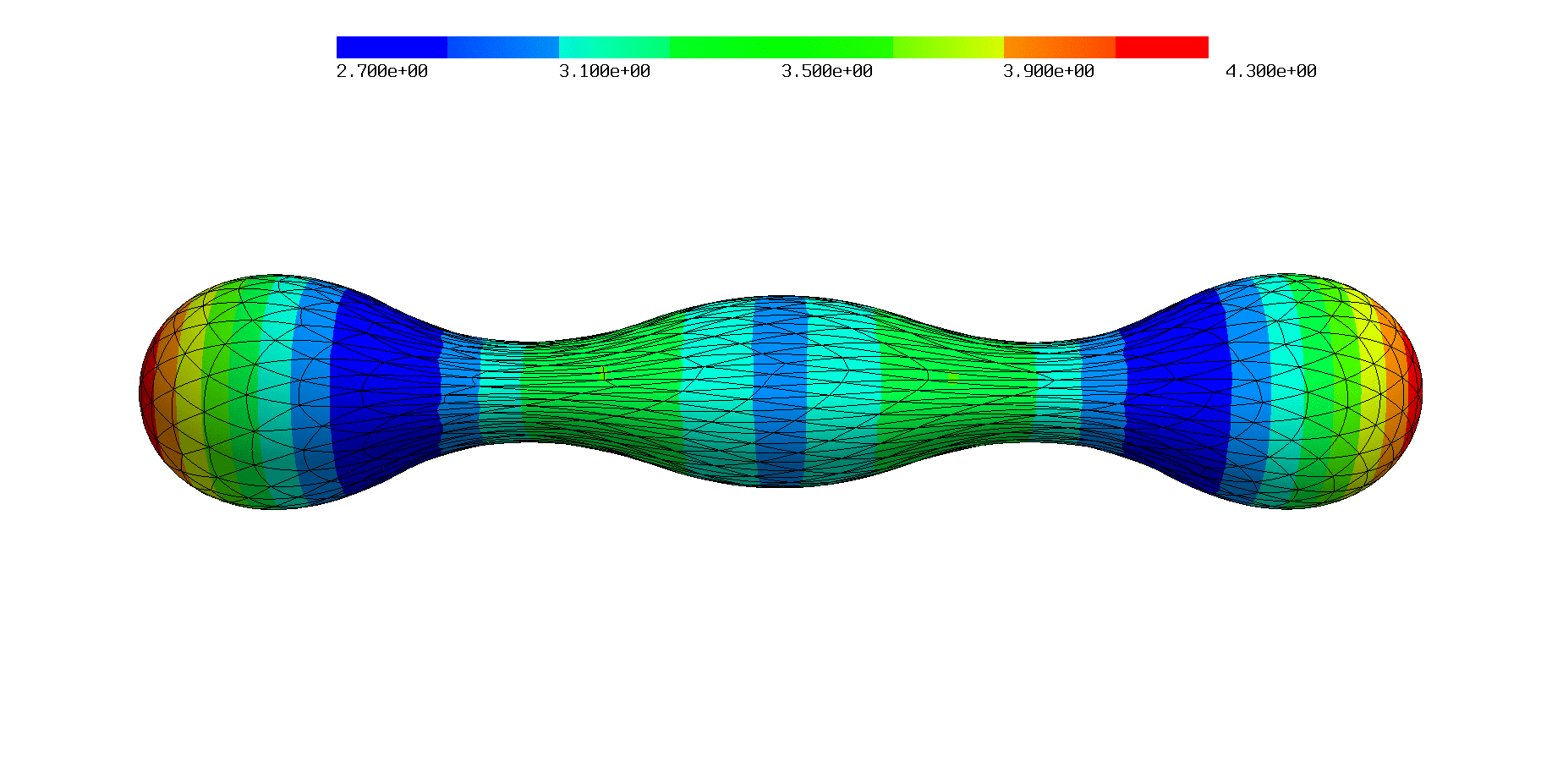}&
		\includegraphics[width=0.33\textwidth]{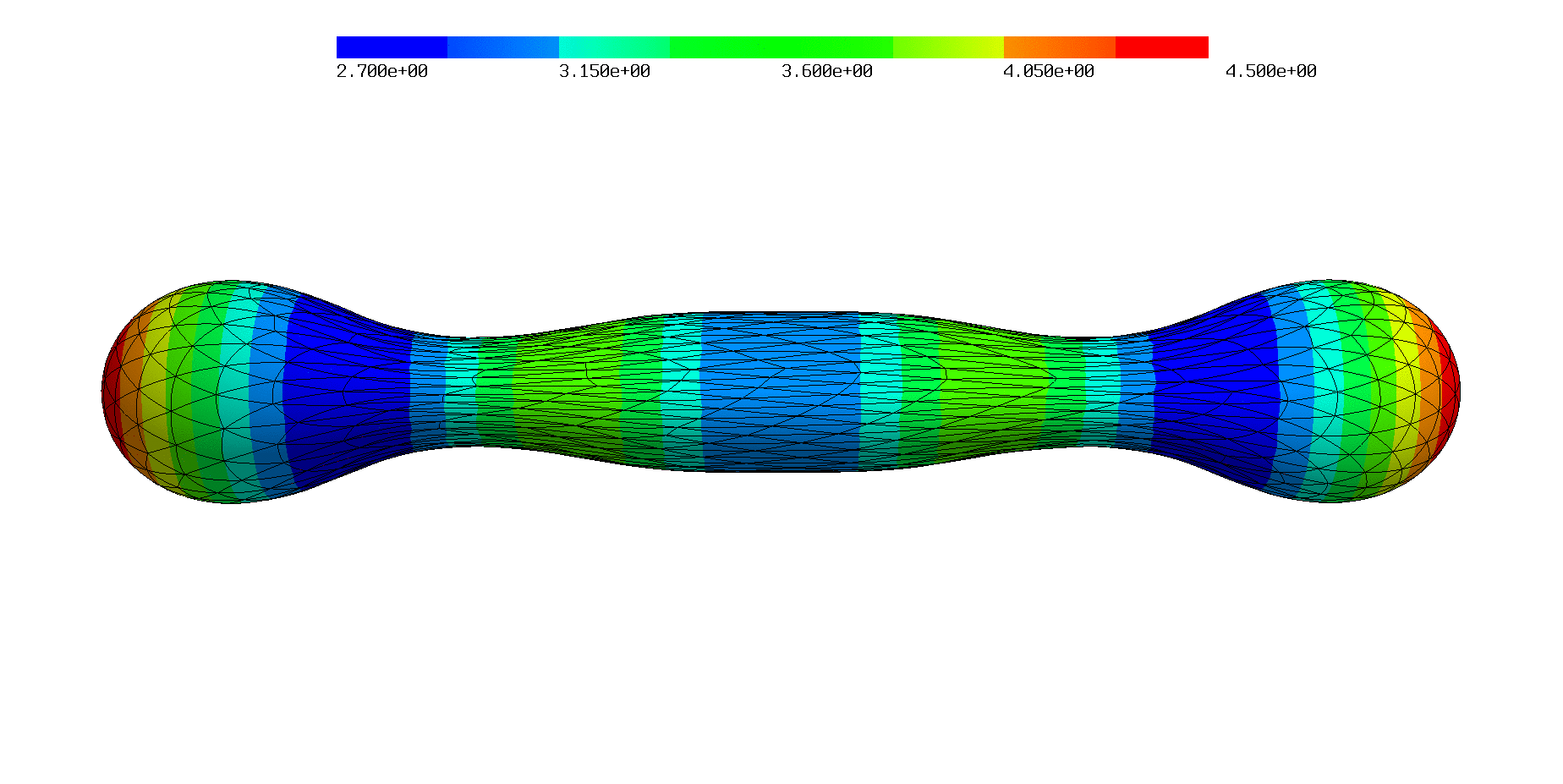}&
		\includegraphics[width=0.33\textwidth]{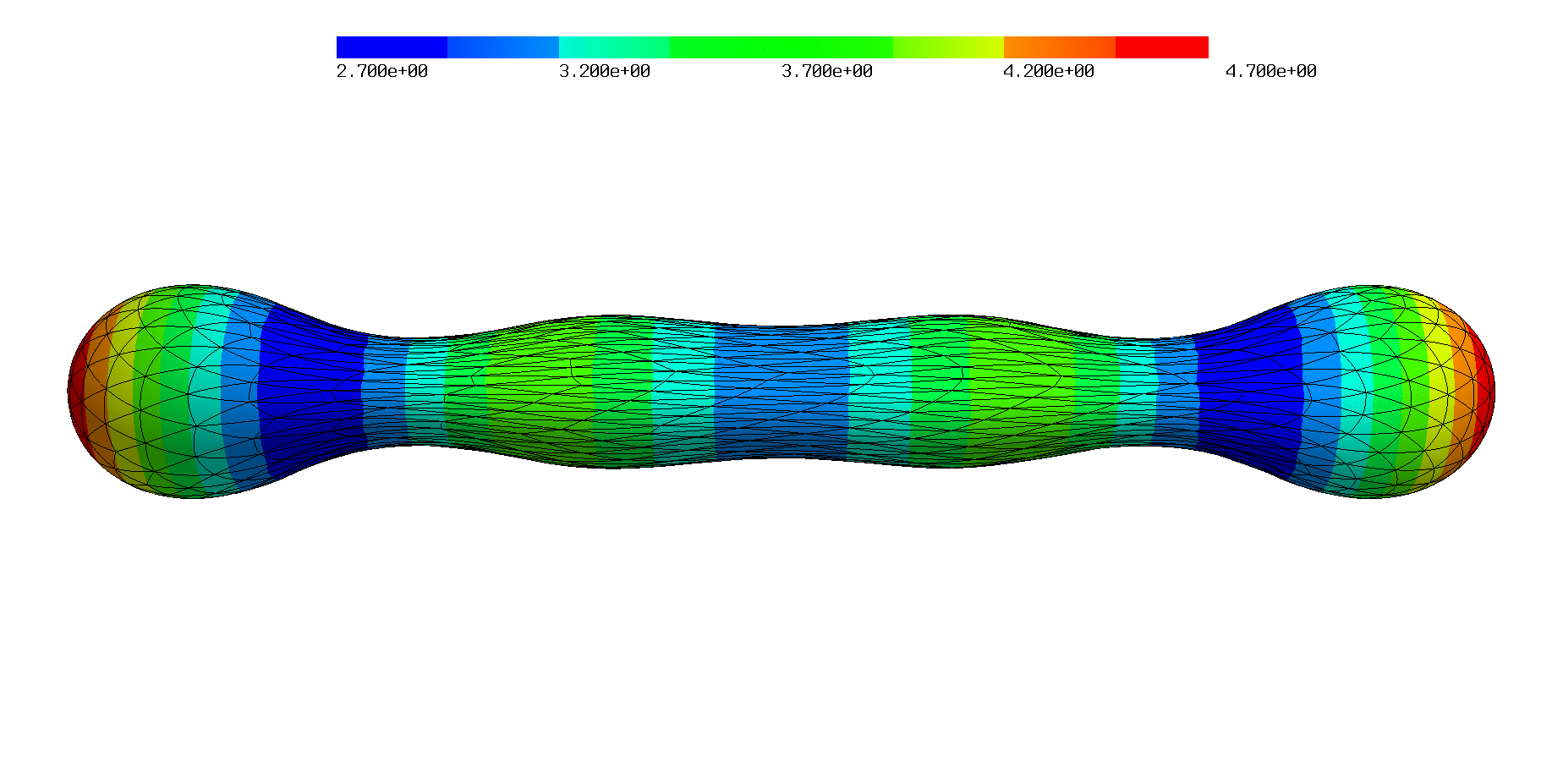}\\
		$\bar{\nu}=0.514$ & $\bar{\nu}=0.491$ & $\bar{\nu}=0.467$\\
	\end{tabular}
	\caption{Prolate shapes for different reduced volumes $\bar{\nu}$ with $H_0=1.5$ and polynomial order $k=2$.}
	\label{fig:prolate_shapes_15}
\end{figure}

\begin{figure}[h]
	\begin{tabular}{ccc}
		\includegraphics[width=0.32\textwidth]{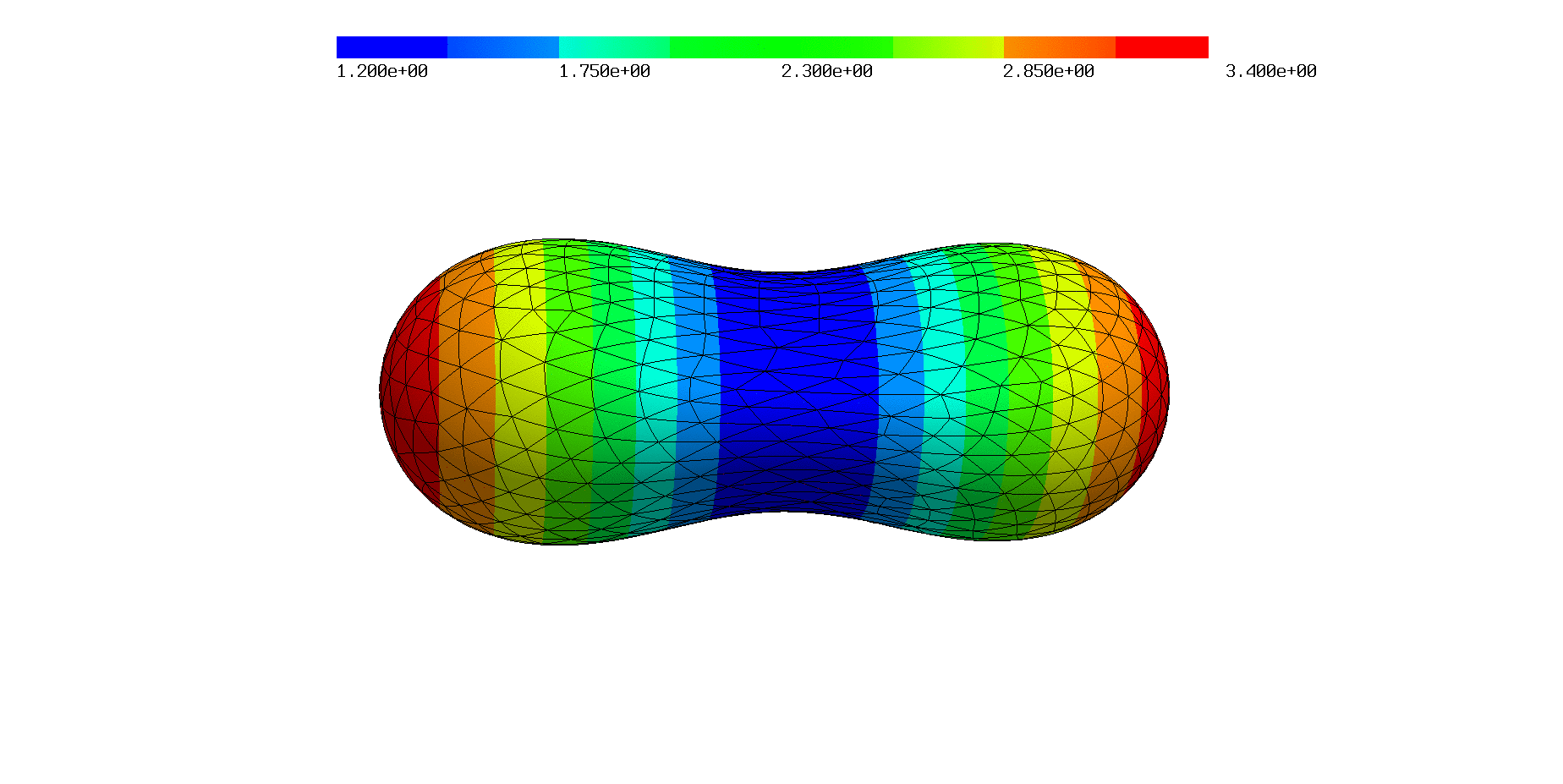}&
		\includegraphics[width=0.32\textwidth]{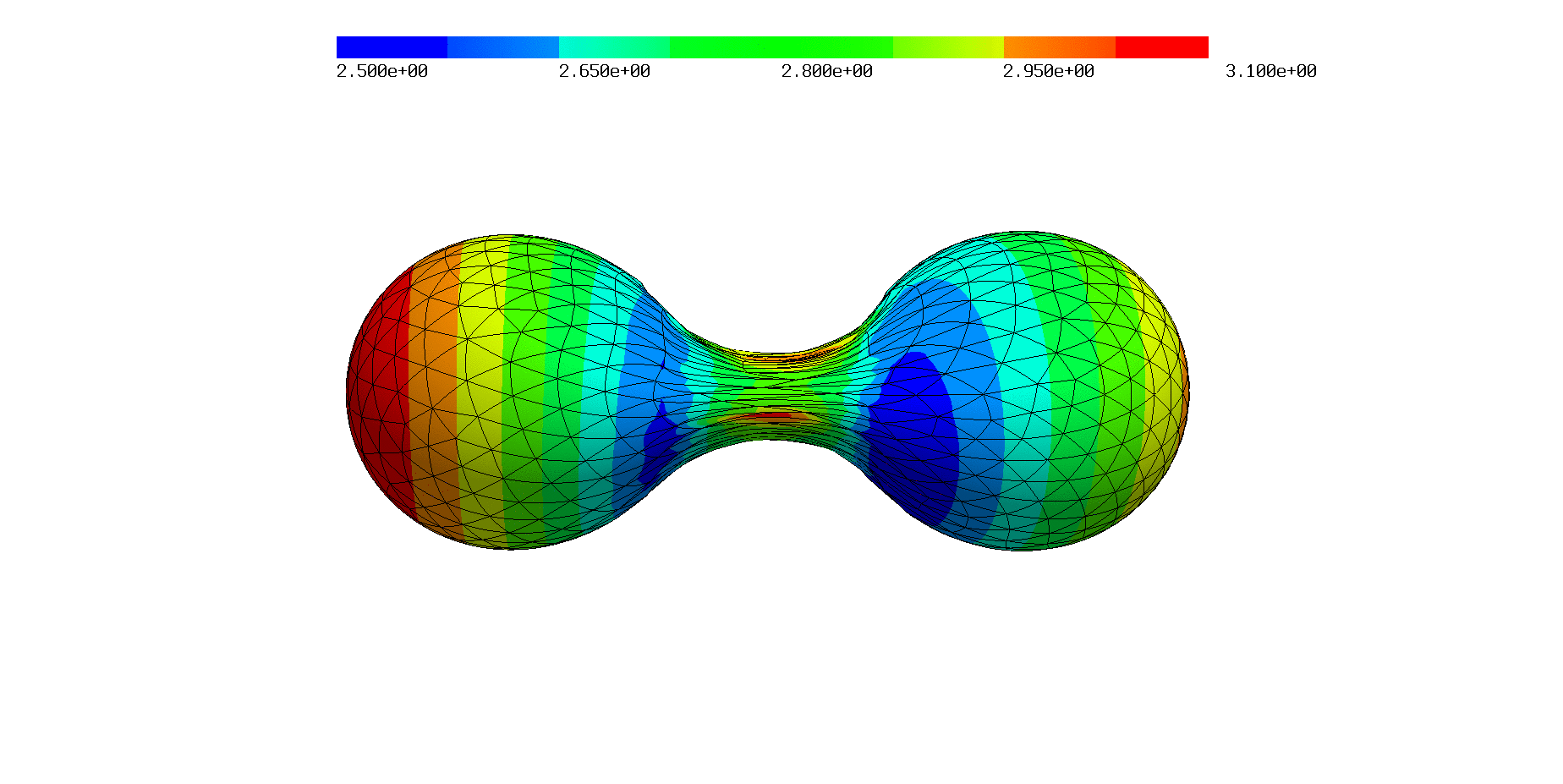}&
		\includegraphics[width=0.32\textwidth]{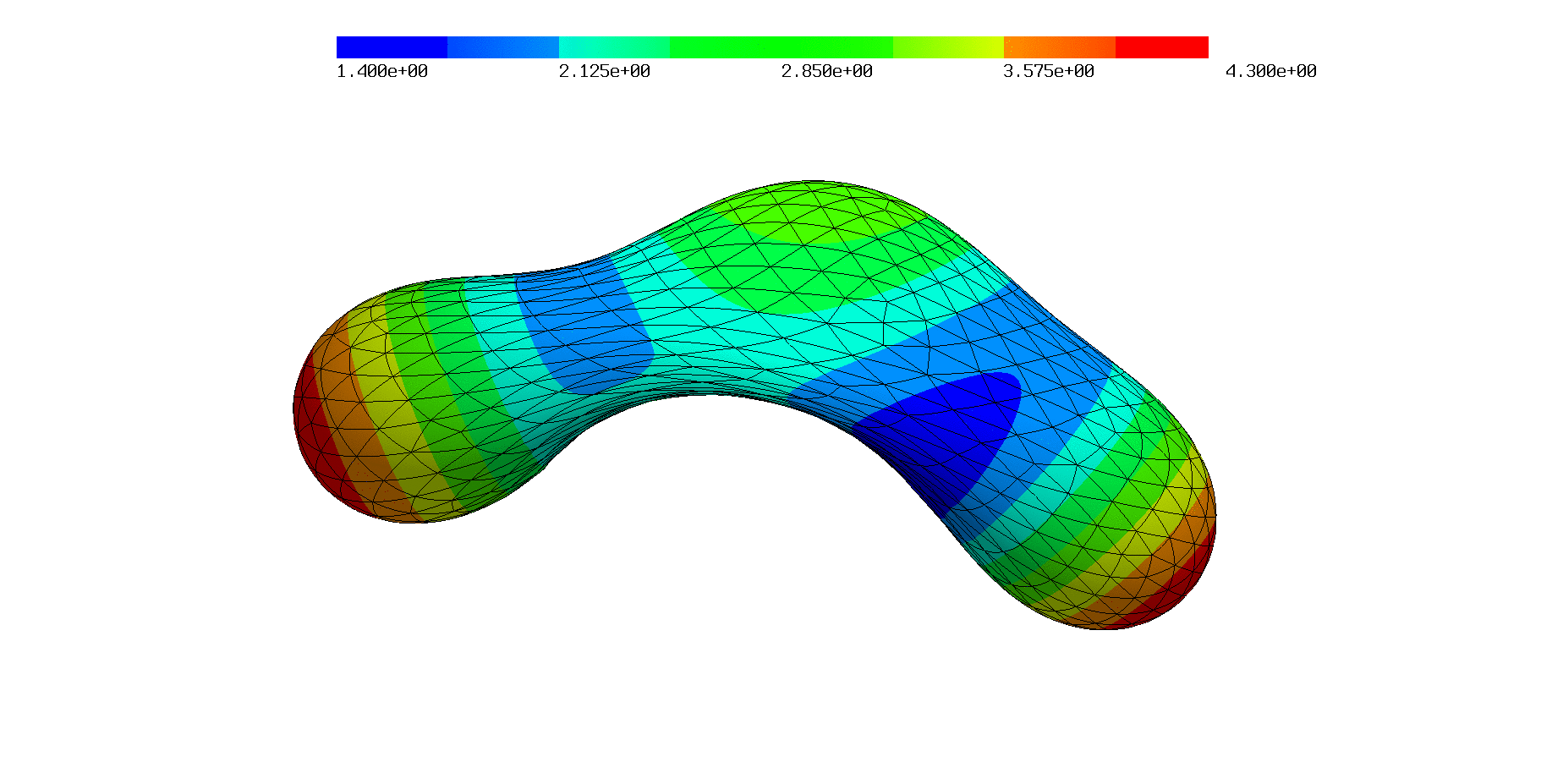}\\
		$\bar{\nu}=0.771$ & $\bar{\nu}=0.674$ & $\bar{\nu}=0.628$\\
		\includegraphics[width=0.32\textwidth]{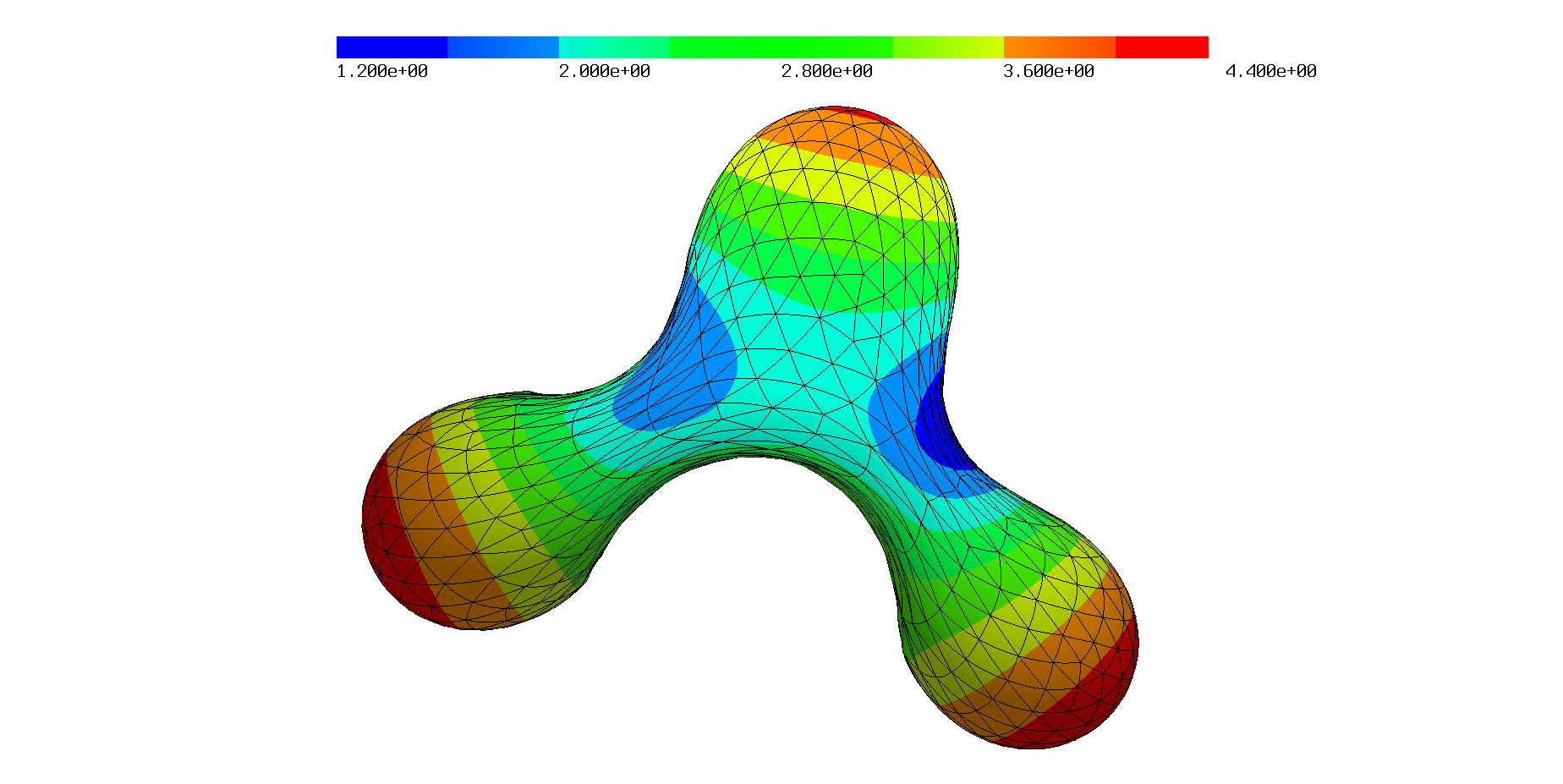}&
		\includegraphics[width=0.32\textwidth]{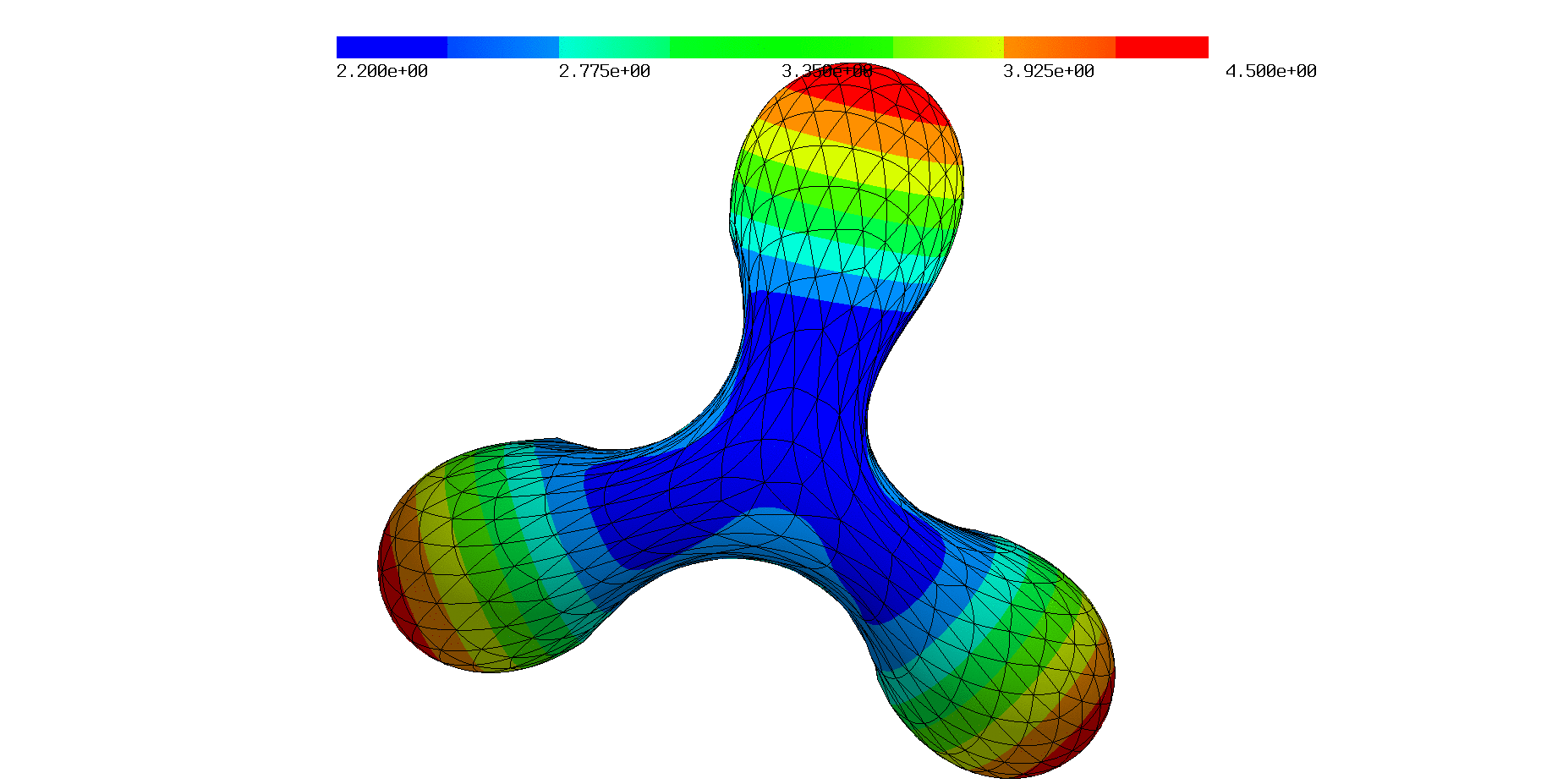}&
		\includegraphics[width=0.32\textwidth]{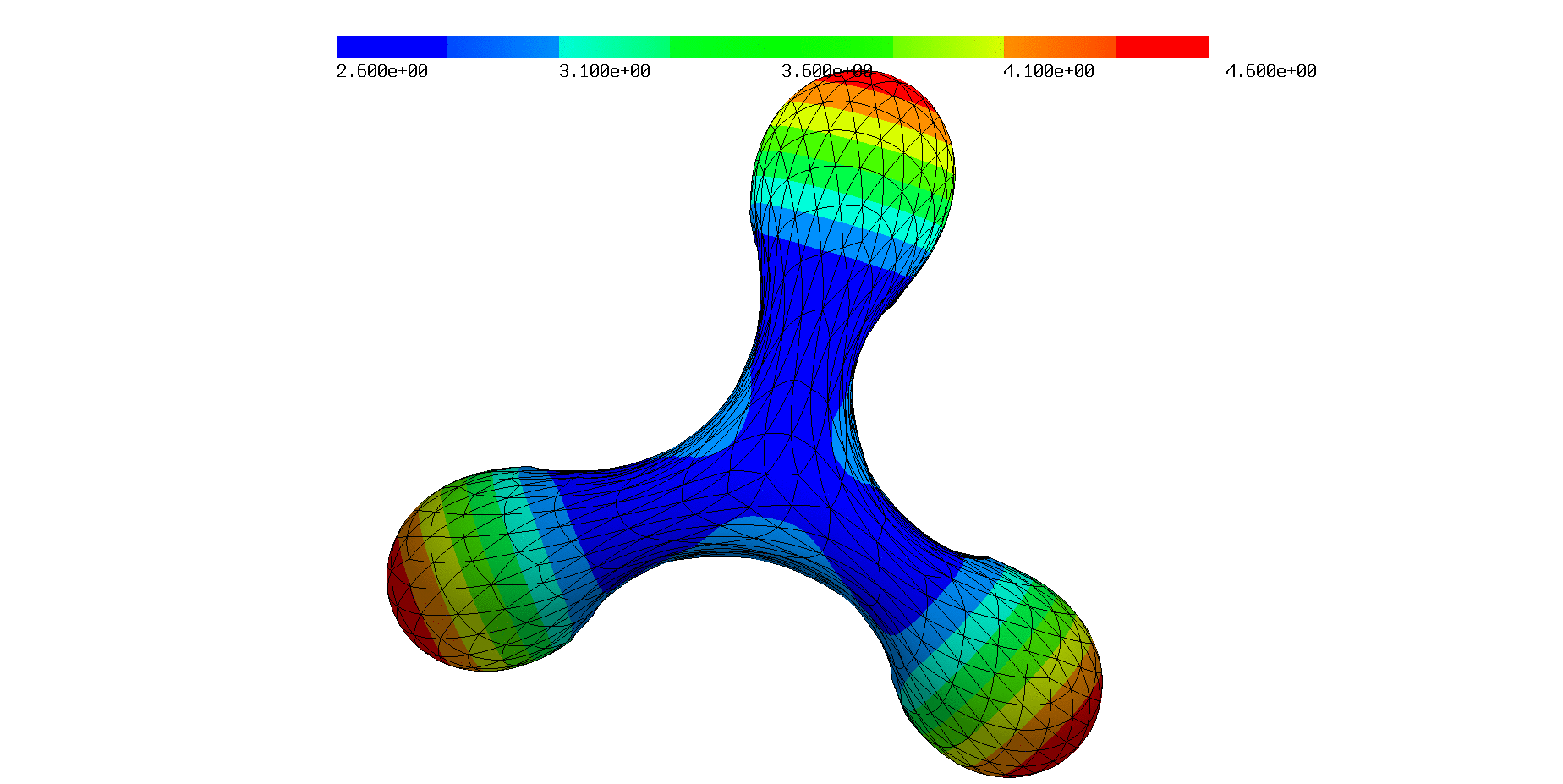}\\
		$\bar{\nu}=0.574$ & $\bar{\nu}=0.533$ & $\bar{\nu}=0.495$
	\end{tabular}
	\caption{Oblate shapes for different reduced volumes $\bar{\nu}$ with $H_0=1.5$ and polynomial order $k=2$.}
	\label{fig:oblate_shapes_15}
\end{figure}

\section*{Conclusion and future work}
In this paper we presented a novel shape optimization method for minimizing the Canham-Helfrich-Evans energy under area and volume constraints based on a lifting of the distributional discrete shape operator. This three-field approach allows for a general formula for the shape derivative independently of the used polynomial order of approximation. A shape gradient optimization procedure has been presented in NGSolve supporting automatic shape differentiation. The performance of the proposed method has been demonstrated on several benchmark examples including curvature computation and spontaneous curvature.

Due to the large deformations of the shapes in specific benchmark configurations, the mesh quality may become poor yielding worse convergence rates or even a termination of the algorithm. Therefore, re-meshing techniques (for arbitrary order of curved geometry) are topic of further research to push forward to more challenging benchmark examples.\newline

\section*{Acknowledgments}
Michael Neunteufel acknowledges support from AC2T research GmbH by project 1931712. Joachim Sch\"oberl acknowledges support by the Austrian Science Fund (FWF)
project F\,65. Kevin Sturm acknowledges support from the Austrian Science Fund (FWF) project P 32911. The authors are also indebted to Ulisse Stefanelli for interesting discussions.

\appendix

\section{Proof of Lemma \ref{lem:shape_der_wg_jump}}
\label{app:comp_shape_der}

For the angle $\sphericalangle(a,b):=\arccos(a\cdot b)$, its derivative is given by
\begin{align*}
\frac{d}{dt}\sphericalangle(a(t),b(t))|_{t=0} = -\frac{1}{\sqrt{1- (a(0)\cdot b(0))^2}}\frac{d}{dt}(a(t)\cdot b(t))|_{t=0}.
\end{align*} 
The averaged normal vector $\Av{\nubf}$ does not depend on the deformation. However, the projection $\bP_{\taubf^t}^\perp(\cdot)$ \eqref{eq:proj_edge_ortho} does. Noting that by construction $\Av{\nubf}\cdot \taubf = 0$ and thus $\bP_{\taubf}^{\perp}(\Av{\nubf})=\Av{\nubf}$ there holds
\begin{align*}
\frac{d}{dt}\sphericalangle(\mubf^t,\bP_{\taubf^t}^{\perp}(\Av{\nubf}))|_{t=0}= -\frac{\frac{d}{dt}(\mubf^t\cdot\bP_{\taubf^t}^{\perp}(\Av{\nubf}))|_{t=0}}{\sqrt{1-(\mubf\cdot\Av{\nubf})^2}}
\end{align*}
and further with the notation $\langle a,b\rangle := a\cdot b$
\begin{align*}
\frac{d}{dt}\bP_{\taubf^t}^{\perp}(\Av{\nubf})|_{t=0} &= \frac{-\frac{d}{dt}\Av{\nubf}\cdot \taubf^t \taubf^t|_{t=0}}{\|\Av{\nubf}-\underbrace{\Av{\nubf}\cdot \taubf \taubf}_{=0}\|}+\frac{\Av{\nubf}}{\|\Av{\nubf}-\underbrace{\Av{\nubf}\cdot \taubf \taubf}_{=0}\|^3}\langle \Av{\nubf}, \frac{d}{dt}\Av{\nubf}\cdot \taubf^t\taubf^t|_{t=0}\rangle\nonumber\\
&=-\frac{d}{dt}\Av{\nubf}\cdot \taubf^t \taubf^t|_{t=0}+\Av{\nubf}\langle \Av{\nubf}, \frac{d}{dt}\Av{\nubf}\cdot \taubf^t\taubf^t|_{t=0}\rangle.
\end{align*}
With
\begin{align*}
\frac{d}{dt}(\Av{\nubf}\cdot \taubf^t \taubf^t)|_{t=0} &= \Av{\nubf}\cdot\taubf(\partial \bX\taubf -(\partial \bX\taubf\cdot\taubf)\taubf)+\Av{\nubf}\cdot (\partial \bX\taubf -(\partial \bX\taubf\cdot\taubf)\taubf)\taubf\nonumber\\
&=\Av{\nubf}\cdot\taubf(\partial \bX\taubf) -2(\partial \bX\taubf\cdot\taubf)(\Av{\nubf}\cdot\taubf)\taubf+\Av{\nubf}\cdot (\partial \bX\taubf)\taubf=\Av{\nubf}\cdot (\partial \bX\taubf)\taubf
\end{align*}
we get
\begin{align*}
\frac{d}{dt}\bP_{\taubf^t}^{\perp}(\Av{\nubf})|_{t=0} &= -\Av{\nubf}\cdot (\partial \bX\taubf)\taubf+\Av{\nubf}\langle \Av{\nubf}, \Av{\nubf}\cdot (\partial \bX\taubf)\taubf\rangle=-\Av{\nubf}\cdot (\partial \bX\taubf)\taubf
\end{align*}
and thus with \eqref{eq:shape_der_mu_surface}, $\mubf\cdot\taubf=0$, and $\Av{\bnu}\cdot\taubf=0$
\begin{align*}
\frac{d}{dt}(\mubf^t\cdot\bP_{\taubf^t}^{\perp}(\Av{\nubf}))|_{t=0} &= ((\bI-\taubf\otimes\taubf)\partial\bX-\partial\bX^\top)\mubf\cdot \Av{\nubf}+\mubf\cdot (-\Av{\nubf}\cdot (\partial \bX\taubf)\taubf)\nonumber\\
&=((\bI-\taubf\otimes\taubf)\partial\bX-\partial\bX^\top)\mubf\cdot \Av{\nubf}=(\partial\bX-\partial\bX^\top)\mubf\cdot \Av{\nubf}
\end{align*}
giving the desired result
\begin{align*}
\frac{d}{dt}\sphericalangle(\mubf^t,\bP_{\taubf^t}^{\perp}(\Av{\nubf}))|_{t=0}= -\frac{(\partial\bX-\partial\bX^\top)\mubf\cdot \Av{\nubf}}{\sqrt{1-(\mubf\cdot\Av{\nubf})^2}}.
\end{align*}

\section{Angle equivalence}
\label{sec:angle_equivalence}
\begin{lemma}
	\label{lem:additive_angle}
	Let $\Va,\Vb\in \VR^3$ with $\|\Va\|=\|\Vb\|=1$. Further let $\Vc\in \VR^3$ with $\|\Vc\|=1$ and $\Vc$ ``lies between'' $\Va$ and $\Vb$, i.e., there exists $t\in [0,1]$ such that $\Vc \in \mathrm{span}\{ t\,\Va+(1-t)\Vb \}$. Then 
	\begin{align}
	\arccos(\Va\cdot \Vb) = \arccos(\Va\cdot \Vc) + \arccos(\Vc\cdot \Vb).
	\end{align}
\end{lemma}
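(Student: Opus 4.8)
My plan is to exploit that the statement is purely two-dimensional: all three vectors lie in the plane $P:=\mathrm{span}\{\Va,\Vb\}$, and the hypothesis says that $\Vc$ is (the normalisation of) a nonnegative combination of $\Va$ and $\Vb$, i.e.\ $\Vc$ lies in the convex cone $\mathrm{cone}(\Va,\Vb)$. One must take this positive representative of the span: the opposite unit vector would violate additivity, and the positive one is precisely the one occurring in the application, where $\Av{\nubf}$ is a genuine average of $\mubf_L,\mubf_R$. If $\Va$ and $\Vb$ are linearly dependent the claim reduces to a direct case check ($\Vb=\pm\Va$), so I would assume $\dim P=2$ from now on.

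The main step is to pass to the angular coordinate on the unit circle of $P$: choosing an orthonormal basis of $P$ and, if necessary, reversing its orientation, I may assume $\Va$ sits at angle $0$ and $\Vb$ at angle $\psi$ with $\psi\in[0,\pi]$, so that $\arccos(\Va\cdot\Vb)=\psi$. Since $\Vc$ is a unit vector in $\mathrm{cone}(\Va,\Vb)$, its angle $\phi$ satisfies $\phi\in[0,\psi]$. Hence $\Va\cdot\Vc=\cos\phi$ with $\phi\in[0,\pi]$ gives $\arccos(\Va\cdot\Vc)=\phi$, and $\Vc\cdot\Vb=\cos(\psi-\phi)$ with $\psi-\phi\in[0,\pi]$ gives $\arccos(\Vc\cdot\Vb)=\psi-\phi$; adding these yields $\arccos(\Va\cdot\Vc)+\arccos(\Vc\cdot\Vb)=\psi=\arccos(\Va\cdot\Vb)$, which is the claim.

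If one prefers a coordinate-free derivation, I would instead set $\alpha:=\arccos(\Va\cdot\Vc)$ and $\beta:=\arccos(\Vc\cdot\Vb)$ in $[0,\pi]$, write the orthogonal decompositions $\Va=\cos\alpha\,\Vc+\Va^\perp$, $\Vb=\cos\beta\,\Vc+\Vb^\perp$ with $\Va^\perp,\Vb^\perp\perp\Vc$ of norms $\sin\alpha$ and $\sin\beta$, and note $\Va\cdot\Vb=\cos\alpha\cos\beta+\Va^\perp\cdot\Vb^\perp$. Because $\Va^\perp$ and $\Vb^\perp$ both lie in the one-dimensional line $P\cap\Vc^\perp$, they are parallel, and testing the relation $\Vc=s\Va+u\Vb$ (with $s,u\ge 0$) against a unit vector $\bm{e}$ spanning that line gives $s(\Va^\perp\cdot\bm{e})+u(\Vb^\perp\cdot\bm{e})=0$, which forces $\Va^\perp$ and $\Vb^\perp$ to point in opposite directions; hence $\Va^\perp\cdot\Vb^\perp=-\sin\alpha\sin\beta$ and $\Va\cdot\Vb=\cos(\alpha+\beta)$. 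One still needs $\alpha+\beta\le\pi$ to conclude $\arccos(\Va\cdot\Vb)=\alpha+\beta$, and this is exactly the point where the cone hypothesis re-enters.

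The routine parts are the degenerate configurations ($s=0$, $u=0$, $\Va=\pm\Vb$), all settled by inspection. The only genuine subtlety, and the point I would be most careful about when writing the proof, is the correct reading of ``$\Vc\in\mathrm{span}\{t\Va+(1-t)\Vb\}$'' as ``$\Vc\in\mathrm{cone}(\Va,\Vb)$'' — equivalently, fixing the sign so that $\Vc$ really lies angularly between $\Va$ and $\Vb$ — since without it the identity is false (e.g.\ $\Va\perp\Vb$ and $\Vc=-\tfrac{1}{\sqrt{2}}(\Va+\Vb)$ give $\tfrac{3\pi}{4}+\tfrac{3\pi}{4}\ne\tfrac{\pi}{2}$).
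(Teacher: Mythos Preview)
Your proposal is correct and follows essentially the same route as the paper: reduce to the two-dimensional plane $\mathrm{span}\{\Va,\Vb\}$, parametrise the unit vectors by angles (the paper does this via complex exponentials $e^{i\alpha},e^{i\beta},e^{i\gamma}$, you via an angular coordinate on the unit circle of $P$), and use that ``$\Vc$ between $\Va$ and $\Vb$'' forces the angle of $\Vc$ to lie between those of $\Va$ and $\Vb$. Your explicit remark that one must read ``$\Vc\in\mathrm{span}\{t\Va+(1-t)\Vb\}$'' as $\Vc\in\mathrm{cone}(\Va,\Vb)$ (choosing the positive normalisation) is a point the paper leaves implicit, and your treatment of the degenerate cases is likewise more careful than the paper's.
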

\begin{proof}
	As $\Vc\in \text{span}\{ \Va,\Vb \}$ we rotate the coordinate system such that all vectors lie w.l.o.g. in the x-y-plane, i.e., $\Va$, $\Vb$, $\Vc\in \VR^2$. There holds $\Va\cdot \Vb = \mathcal{R}(a\bar{b})$, where we identified $\Va,\Vb$ with complex numbers, $\bar{(\cdot)}$ denotes the complex conjugation, and $\mathcal{R}(\cdot)$ the real part. As the vectors are normalized we have
	\begin{align*}
	a = e^{i\alpha},\qquad b = e^{i\beta}, \qquad c = e^{i\gamma},\qquad\qquad i^2=-1,\quad \alpha,\beta,\gamma \in [0,2\pi).
	\end{align*}
	W.l.o.g. assume that $\alpha>\beta$. The condition that $\Vc$ lies between $\Va$ and $\Vb$ is then equivalent to $\alpha \geq \gamma \geq \beta$. Thus, there holds with $\mathcal{R}(a\bar{b})=\mathcal{R}(e^{i(\alpha-\beta)})=\cos(\alpha-\beta)$
	\begin{align*}
	\arccos(\Va\cdot \Vb) = \arccos(\cos(\alpha-\beta))\overset{\alpha\geq\beta}{=}\alpha-\beta
	\end{align*}
	and the right-hand side is
	\begin{align*}
	\arccos(\Va\cdot \Vc) + \arccos(\Vc\cdot \Vb) &\overset{\alpha\geq\gamma\geq\beta}{=}\alpha-\gamma+\gamma-\beta= \alpha -\beta.
	\end{align*}
\end{proof}

\section{Supplementary material}
\label{sec:suppl_material}
In this supplementary material we present and describe the basic algorithm of our proposed method to solve the Canham--Helfrich--Evans minimization problem including area and volume constraints. For a better presentation we split the code into several snippets. Summing them up an executable file running in NGSolve\footnote{www.ngsolve.org} \cite{Sch14} is obtained. Note that NumPy\footnote{www.numpy.org} is required to execute the file. For further details concerning shape optimization in NGSolve we refer to \cite{KSNS20}.

\begin{lstlisting}[caption=Include packages and define parameters.,label=lst:parameters]
from ngsolve import *
from netgen.csg import *
from netgen.meshing import MeshingStep
from math import pi
import numpy as np

autodiff = False # use automatic shape derivative?

order    = 1     # polynomial order
maxh     = 0.2   # mesh-size
v        = 0.7   # goal reduced volume
H0       = 0     # spontaneous curvature

kb  = 0.01         # bending constant
kv  = Parameter(1) # penalty for volume
kag = Parameter(2) # penalty for global area
kal = Parameter(1) # penalty for local area

xvec  = CF( (x,y,z) )  # identity CoefficientFunction

nsurf = specialcf.normal(3)      # outer normal vector
tang  = specialcf.tangential(3)  # edge tangential vector
nel   = Cross(nsurf,tang)        # co-normal vector
\end{lstlisting}

First, we include necessary packages and define several parameters as the used polynomial order and mesh-size. Additionally the identity function as well as the outer normal, edge tangential, and co-normal vector used later are declared.

\begin{lstlisting}[caption=Define mesh.,label=lst:mesh]
geo = CSGeometry()
a = 1.1017
b = 0.95
geo.Add(Ellipsoid(Pnt(0,0,0), Vec(a,0,0), Vec(0,b,0), Vec(0,0,b)))
mesh = Mesh(geo.GenerateMesh(maxh=maxh, perfstepsend=MeshingStep.MESHSURFACE))
mesh.Curve(order)
Draw(mesh)
\end{lstlisting}

In Listing~\ref{lst:mesh} we generate a prolate initial surface shape with given mesh-size and curve it appropriately.

\begin{lstlisting}[caption=Preparation for are and volume constraint.,label=lst:area_vol]
A0  = Integrate(1, mesh, BND)                # inital area
V0  = Integrate(1/3*xvec*nsurf, mesh, BND)   # initial volume
V   = v*4/3*pi*(A0/(4*pi))**(3/2)            # goal volume

At0 = GridFunction(SurfaceL2(mesh, order=0)) # initial local areas
At0.vec.FV().NumPy()[:] = Integrate(1, mesh, BND, element_wise=True)

At  = GridFunction(SurfaceL2(mesh, order=0)) 
At.vec.data = At0.vec    # initialize current local areas
A_cur = Parameter(A0)    # initialize current area
V_cur = Parameter(V0)    # initialize current volume
\end{lstlisting}

Next, we compute the initial area and enclosed volume of the mesh and define the goal volume by means of the reduced volume parameter. Further, the area of each triangle is stored for the local area stabilization constraint. The involved \texttt{SurfaceL2} space of order= $0$ consists of a constant value per surface element, which is stored in the \texttt{GridFunction} object. If required the \texttt{GridFunction} can be drawn visualizing the local areas.

\begin{lstlisting}[caption={Energy, Cost, and CostDiff auxiliary function.},label=lst:energy_cost_costdiff]
# compute normalized Canham-Helfrich-Evans energy
def Energy(kappa, gfset):
	mesh.SetDeformation(gfset)
	energy = Integrate(2*kb*(1/2*kappa-H0)**2, mesh, BND)
	mesh.UnsetDeformation()
	return energy/(8*pi*kb)

# compute costs w.r.t. bending energy and area/volume constraints
def Cost(kappa):
	# compute current areas, volume, and bending energy
	A_cur.Set(Integrate(1, mesh, BND))
	At.vec.FV().NumPy()[:] = Integrate(1, mesh, BND, element_wise=True)
	V_cur.Set(1/3*Integrate(xvec*nsurf, mesh, BND))
	bending = Integrate(2*kb*(1/2*kappa-H0)**2, mesh, BND)

	constr_At =  kal.Get()*(np.square(At.vec.FV().NumPy()-At0.vec.FV().NumPy())/At0.vec.FV().NumPy()).sum()
	constraint = kag.Get()*(A_cur.Get()-A0)**2/A0 + kv.Get()*(V_cur.Get()-V)**2/V + constr_At

	return  bending + constraint

# return shape derivative of Cost functional
def CostDiff(kappa, PSI):
	if autodiff: # automatic shape derivative
		bending = (2*kb*(1/2*kappa-H0)**2*ds).DiffShape(PSI)
		constr  = (2*(kag*(A_cur-A0)/A0 + kal*(At-At0)/At0 + kv*(V_cur-V)/V/3*xvec*nsurf)*ds).DiffShape(PSI)
	else: # manual shape derivative
		tangdet = div(PSI).Trace() # surface divergence
		bending = 2*kb*tangdet*(1/2*kappa-H0)**2*ds
		constr  = 2*(kag*(A_cur-A0)/A0*tangdet + kv*(V_cur-V)/V*PSI*nsurf + kal*(At-At0)/At0*tangdet)*ds
	return bending + constr
\end{lstlisting}

The function \texttt{Energy} computes the normalized Canham--Helfrich--Evans bending energy. Additionally to the curvature $\kappa$ also a \texttt{GridFunction} object storing the current displacement information of the mesh is handed over. With the method \texttt{SetDeformation} of the mesh all integration procedures are performed as if we would consider the deformed mesh according to the displacement until the \texttt{UnsetDeformation} command is used.\newline
The \texttt{Cost} function computes the current Canham--Helfrich--Evans bending energy together with the area and volume constraints. Note, that we use NumPy to avoid slow Python for-loops.\newline
Next, we need the shape derivative of the cost functional in direction $\Psi$ defined later. We can either use the build-in automatic shape differentiation procedure denoted by \texttt{DiffShape(PSI)} \cite{KSNS20} or by manually computing the derivatives. Note that the \texttt{ds} object indicates that the integration will be performed on the surface.

\begin{lstlisting}[caption={Equation and EquationDiff auxiliary function.},label=lst:equ_equdiff]
# return equation
def Equation(kappa, sigma, nav):
	return (kappa*sigma + Trace(Grad(nsurf))*sigma)*ds + (pi/2-acos(nel*nav))*sigma*ds(element_boundary=True)

# return shape derivative of equation
def EquationDiff(kappa, sigma, nav, PSI):
	if autodiff: # automatic shape derivative
		return Equation(kappa, sigma, nav).DiffShape(PSI)

	# manual shape derivative
	tangdet = div(PSI).Trace()
	return (tangdet*(kappa*sigma + Trace(Grad(nsurf))*sigma) + ((Grad(PSI).Trace()*Grad(sigma))*nsurf - Trace(Grad(PSI).Trace().trans*Grad(nsurf))*sigma))*ds + (((Grad(PSI).Trace()*tang)*tang*(pi/2-acos(nel*nav)) + 1/sqrt(1-InnerProduct(nav,nel)**2)*((Grad(PSI).Trace() - Grad(PSI).Trace().trans)*nel)*nav)*sigma -(Grad(PSI).Trace()*nel)*nsurf*sigma)*ds(element_boundary=True)
\end{lstlisting}

In Listing~\ref{lst:equ_equdiff} we define the equation of the state problem, where the distributional curvature gets lifted to the auxiliary curvature field $\kappa$. For the needed shape derivative we again can directly differentiate it. The analytical computations presented in the paper are quite involved, however, manageable.

\begin{lstlisting}[caption={(Bi-)Linear forms for (adjoint) state and shape gradient problem.},label=lst:bilinearforms]
VEC = VectorH1(mesh, order=order)    # vector-Lagrange finite element space
PHI,PSI = VEC.TnT()                  # trial- and testfunction

gfX   = GridFunction(VEC)            # store shape gradient
gfset = GridFunction(VEC)            # store current displacement

fes = H1(mesh, order=order)          # Lagrange finite element space
kappa, sigma = fes.TnT()             # trial- and testfunction

gfkappa = GridFunction(fes)          # store curvature (state)
gfsigma = GridFunction(fes)          # store moments (adjoint state)

# space for averaging normal vector
fesfacet = VectorFacetSurface(mesh, order=order-1)
gfh = GridFunction(fesfacet)           # store the averaged normal vector
nav = Normalize(CF( gfh.components ))  # normalize averaged normal vector

# left-hand side for solving (adjoint) state problem
a = BilinearForm(fes, symmetric=True)
a += kappa*sigma*ds
a.Assemble()  # assemble and invert as preparation
inva = a.mat.Inverse(freedofs=fes.FreeDofs(), inverse="sparsecholesky")

# right-hand side for state problem (curvature)
fa = LinearForm(fes)
fa += -Trace(Grad(nsurf))*sigma*ds - (pi/2-acos(nel*nav))*sigma*ds(element_boundary=True)

# right-hand side for adjoint state problem
dCostdu = LinearForm(fes)
dCostdu += 2*kb*(1/2*gfkappa-H0)*sigma*ds

# left-hand side for shape optimization gradient method
aX = BilinearForm(VEC, symmetric=True)
aX += (InnerProduct(Grad(PHI).Trace(), Grad(PSI).Trace()) + 1e-10*PHI*PSI)*ds
aX.Assemble()  # assemble and invert as preparation
invaX = aX.mat.Inverse(VEC.FreeDofs(), inverse="sparsecholesky")

# right-hand side for shape optimization gradient method
fX = LinearForm(VEC)
fX += CostDiff(gfkappa, PSI).Compile()
fX += EquationDiff(gfkappa, gfsigma, nav, PSI).Compile()
\end{lstlisting}

After having defined the important functions we can focus on the solving algorithm. For the displacement field and the shape gradient we use a vector-valued Lagrange finite element space and define the symbolic trial- and testfunction objects $\Phi$ and $\Psi$. The shape gradient and displacement field themselves get stored as a \texttt{GridFunction} object of the $H^1$-conforming finite element space.\newline
The independent curvature field $\kappa$ as well as the Lagrange multiplier (adjoint state) $\sigma$ are discretized by scalar Lagrange elements and get stored in the corresponding \texttt{GridFunctions}. To compute the used averaged normal vector a \texttt{SurfaceVectorFacet} finite element space is used living only on the edges (the skeleton) of the triangulation. The corresponding \texttt{GridFunction} needs to be normalized to measure the correct angle. As discussed in the paper we directly neglected the projection operator $\bP_{\taubf}^\perp(\cdot)$ to reduce the expressions gaining performance, which, however, could be implemented with the following line.
\begin{lstlisting}[caption={Projected averaged normal vector.},label=lst:proj_av_nv]
nav = Normalize( CF( gfh.components ) - (tang*CF( gfh.components ))*tang )
\end{lstlisting}
To solve the state and adjoint state problem a mass matrix is assembled and inverted with the build-in ``sparsecholesky'' solver. The right-hand side for the state problem gets represented by the \texttt{LinearForm} \texttt{fa}, which is written completely symbolically. We note that \texttt{$\dots$*ds(element\_boundary=True)} corresponds to the integral $\sum_{T\in\T_h}\int_{\partial T}\dots\,d\gamma$. the right-hand side of the adjoint state problem is given by the variation of the Canham--Helfrich--Evans energy with respect to the curvature $\kappa$.\newline
For the shape optimization gradient method we define, assemble, and invert the (regularized) $H^1$-scalar product. The right-hand side is given by the shape derivative of the state equation as well as the cost functional. Note that the \texttt{Compile()} statement optimizes the internally generated expression tree of symbolic expressions to gain evaluation performance.

\begin{lstlisting}[caption={Solve current (adjoint) state problem.},label=lst:solvepde]
def solvePDE():
	# average current normal vector
	gfh.Set(nsurf, dual=True, definedon=mesh.Boundaries(".*")) 
	# solve adjoint and state equation
	a.Assemble()
	fa.Assemble()
	inva.Update()
	gfkappa.vec.data = inva*fa.vec
	dCostdu.Assemble()
	gfsigma.vec.data = -inva*dCostdu.vec
	return
\end{lstlisting}

As we need to solve the (adjoint) state problem in every optimization step, we summarize them in Listing~\ref{lst:solvepde}. First, the new normal vector is averaged and then the problems for $\kappa$ and $\sigma$ are solved.

\begin{lstlisting}[caption={Draw fields and optimization parameters.},label=lst:draw_opt_par]
solvePDE()
Draw(gfX, mesh, "gfX")
Draw(gfsigma, mesh, "adjoint")
Draw(gfkappa, mesh, "state")
Draw(gfset, mesh, "displacement")
Draw(Norm(0.5*gfkappa), mesh, "mean")
SetVisualization(deformation=True)

iter_max            = 1000   # maximal number of optimization steps
scale_init          = 0.025  # initial step-size
scale_max           = 0.1    # maximal step-size
scaleIncreaseFactor = 1.00   # increasing factor after accepted step
tol_scale           = 1e-11  # tolerance for minimal step-size 
tol_gfX             = 1e-12  # tolerance for shape gradient
tol_J               = 1e-10  # tolerance for costs
normGFX_start       = None   # store initial shape gradient

isConverged = False
Jold        = 0                 # store previous costs
gfsettmp    = GridFunction(VEC) # store temporary mesh displacement
\end{lstlisting}

In Listing~\ref{lst:draw_opt_par} all quantities are drawn for visualization and the mesh will be visually deformed to the current shape by the \texttt{SetVisualization(deformation=True)} command.\newline
Then, several self-explaining optimization parameters are defined.

\begin{lstlisting}[caption={Shape optimization gradient method loop.},label=lst:shape_opt_loop]
with TaskManager():
	solvePDE()
	Jnew = Cost(gfkappa)
	Jold = Jnew
	
	print("it init", 'cost', Jnew )
	scale = scale_init 

	for k in range(iter_max):
		# solve (adjoint) state problem and prepare shape derivative
		mesh.SetDeformation(gfset)
		solvePDE()
		aX.Assemble()
		fX.Assemble()
		mesh.UnsetDeformation()

		invaX.Update()
		gfX.vec.data = invaX * fX.vec  # next shape gradient

		currentNormGFX = Norm(gfX.vec)
		if k == 0: normGFX_start = currentNormGFX

		while True: # line-search
			if scale < tol_scale or currentNormGFX < normGFX_start*tol_gfX or Jnew < tol_J: # converged?
				isConverged = True
				break

			# guess for next mesh displacement
			gfsettmp.vec.data = gfset.vec - scale * gfX.vec

			mesh.SetDeformation(gfsettmp)
			solvePDE()
			Jnew = Cost(gfkappa)
			mesh.UnsetDeformation()

			if Jnew <= Jold: # accept step?
				Jold = Jnew
				print("----------it", k, 'scale', scale, 'cost', Jnew )
				gfset.vec.data = gfsettmp.vec
				scale = min(scale_max, scale*scaleIncreaseFactor)
				break
			else: # if not, reduce step-size
				scale = scale / 2

		print("--------------------------------------||gfX||", currentNormGFX)

		Redraw() # redraw solutions

		if isConverged:
			print("converged with J = ", Jnew, ", ||gfX||=", currentNormGFX, ", scale = ", scale)
			break
\end{lstlisting}

Finally, the shape gradient Algorithm~5.1 in the paper is presented in Listing~\ref{lst:shape_opt_loop}. First, we activate the build-in \texttt{TaskManager} to perform the following assembling and inversion processes in (thread-)parallel, solve the (adjoint) state problem on the initial shape and evaluate the cost function. In every optimization step we re-compute the (adjoint) state problem as well as the next shape gradient on the current configuration of the mesh. Therefore, analogously as in Listing~\ref{lst:energy_cost_costdiff} in the \texttt{Energy} function, we use the \texttt{SetDeformation} method, where the \texttt{GridFunction} object \texttt{gfset} is used as input having stored the displacement information how the initial mesh has to be deformed to obtain the current shape. Next, a line-search is performed to guarantee that the final gradient step non-increases the cost functional. Therefore, the temporary object \texttt{gfsettmp} saves the previous displacement plus a scaled gradient step with the step-size parameter \texttt{size}. After computing the cost functional on the temporary configuration the decrease of the cost functional is checked. If accepted, the step-size is increased by an factor, otherwise the step-size gets halved and a gradient step with the reduced size is tried until the cost functional decreases or the step-size becomes to small yielding a break down of the algorithm.

\begin{lstlisting}[caption={Postprocessing.},label=lst:postprocess]
# compute final area, volume, reduced volume, cost, and energy
mesh.SetDeformation(gfset)
Vnew = Integrate(1/3*xvec*nsurf, mesh, BND)
Anew = Integrate(1, mesh, BND)
vnew = Vnew/(4/3*pi*(Anew/(4*pi))**(3/2))
cost = Cost(gfkappa)
mesh.UnsetDeformation()

print("cost   = ", cost)
print("energy = ", Energy(gfkappa,gfset))
print("Vnew   = ", Vnew)
print("Anew   = ", Anew)
print("vnew   = ", vnew)
\end{lstlisting}

After the algorithm determinated, the quantities as area and volume are updated. Note that the final reduced volume $v_{\mathrm{new}}$ does not necessarily need to coincide with the goal reduced volume $v$ from Listing~\ref{lst:parameters} as the penalty method for the area and volume constraints is used.
\bibliographystyle{acm}
\bibliography{cites}

\end{document}